\documentclass[12pt]{amsart}
\usepackage{Baskervaldx}
\usepackage[baskervaldx,cmintegrals,bigdelims,vvarbb]{newtxmath}
\usepackage[cal=cm]{mathalfa}
\usepackage{enumerate, geometry, pgfplots, graphicx, subcaption, amsmath, array, pdfsync, tikz, tikz-cd, color, url, float, wrapfig, comment, MnSymbol,adjustbox}
\usepackage{comment}
\usepackage{xypic}
\usetikzlibrary{positioning}

\usepackage[hyperfootnotes=false, pdfencoding=auto]{hyperref}
\hypersetup{
  colorlinks= true,
  linkcolor=[RGB]{0,89,42},
  urlcolor=[RGB]{0,89,42},
  citecolor=[RGB]{0,89,42}
}

\usepackage[font=footnotesize]{caption}

\usetikzlibrary{calc}

\linespread{1.1}
\geometry{
 right=1in,
 left=1in,
 top=1in,
 bottom=1in
 }

\setlength{\parskip}{4pt}

\numberwithin{equation}{section}

\definecolor{mycolor}{RGB}{146, 214, 203}
\definecolor{myothercolor}{RGB}{179, 215, 232}

\newtheorem{theorem}{Theorem}

\newtheorem{proposition}[theorem]{Proposition}
\numberwithin{theorem}{section}

\theoremstyle{definition}

\newcommand{\hr}[2]{\hyperref[#1]{#2}}

\theoremstyle{definition}
\newtheorem{remark}[theorem]{Remark}

\newtheorem{question}[theorem]{Question}
\newtheorem*{question*}{Question}

\newtheorem{definition}[theorem]{Definition}

\def\AA{{\mathbb A}}
\def\NN{{\mathbb N}}

\def\LL{{\mathbb L}}

\def\PP{{\mathbb{P}}}

\def\CCal{{\mathcal{C}}}

\def\Lc{{\mathcal{L}}}

\def\id{{\mathrm{id}}}

\def\dra{{\dashrightarrow}}

\newcommand{\Hilb}{\mathrm{Hilb}}

\def \DC{{\mathrm{DomCor}}}
\newcommand{\OO}{\mathcal{O}}
\newcommand{\cd}{\textrm{corr.deg}}

\newcommand*{\sheafTor}{\mathcal{T}\kern -.5pt or}
\newcommand*{\Extc}{\mathcal{E}\kern -.5pt xt}
\newcommand{\irr}{\textrm{irr}}

\pgfplotsset{compat=1.15}

\begin{document}

\title{Slicing correspondences with high degree hypersurfaces}
\author{Ishan Banerjee}
\maketitle

\begin{abstract}
    We approximately compute the correspondence degree (as defined by Lazarsfeld and Martin) between two unbalanced complete intersections. This is accomplished by showing that the procedure of taking a subvariety of a product $Y \times Y'$ and intersecting it with $X \times Y'$ (for $X$ a sufficiently ample smooth divisor in $Y$) induces a bijection between two sets of varieties. This may be of independent interest.
\end{abstract}

\section{Introduction}
In the paper \cite{LM}, Lazarsfeld and Martin introduce the notion of the \emph{correspondence degree} between two varieties. Given two varieties $Y$ and $Y'$ of the same dimension, they define the correspondence degree between $Y$ and $Y'$ denoted, $\cd(Y, Y')$ by the formula: $$ \cd(Y, Y') = \min\{ \deg(W \to Y) \deg(W \to Y') \},
$$
where $W$ is ranges over all irreducible subvarieties of $Y \times Y'$ of the same dimension as $Y$ such that the projection maps from $W$ to $Y$ and $Y'$ are dominant.

They prove the following Theorem: 
\begin{theorem}
    Let $Y, Y'$ be very general hypersurfaces of degree $d, e \ge  2n+ 2$ in $\PP^{n+1}.$ Then $\cd (Y, Y') =  \irr (Y)\irr(Y') = (d-1)(e-1),$ where $\irr(Y), \irr(Y')$ are the degrees of irrationality of $Y$ and $Y'$ respectively.
\end{theorem}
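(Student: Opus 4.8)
\emph{Strategy.} The plan is to prove the two bounds $\cd(Y,Y')\le (d-1)(e-1)$ and $\cd(Y,Y')\ge (d-1)(e-1)$ separately. The upper bound is the soft direction: as observed by Lazarsfeld and Martin, for any two $n$-folds one has $\cd(Y,Y')\le \irr(Y)\cdot\irr(Y')$, because if $\phi\colon Y\dra\PP^n$ and $\phi'\colon Y'\dra\PP^n$ are dominant rational maps realizing the degrees of irrationality (for a general such choice the closure of $Y\times_{\PP^n}Y'$ is irreducible), the resulting $W$ dominates both factors with $\deg(W\to Y)\le\deg\phi'$ and $\deg(W\to Y')\le\deg\phi$. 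Combining this with the theorem of Bastianelli, De Poi, Ein, Lazarsfeld and Ullery that $\irr(Y)=d-1$ and $\irr(Y')=e-1$ once $d,e\ge 2n+2$ and $Y,Y'$ are very general (the bound $\le d-1$ from projecting from a point of the hypersurface, the bound $\ge d-1$ from the positivity of $\omega_Y=\Oc_Y(d-n-2)$) yields the upper bound, so the entire content is the reverse inequality.

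\emph{The lower bound, via slicing.} Fix an irreducible $W\subseteq Y\times Y'$ of dimension $n$ with both projections dominant, say $p\colon W\to Y$ of degree $a$ and $q\colon W\to Y'$ of degree $b$; I must show $ab\ge(d-1)(e-1)$. The engine is the bijection advertised in the abstract, which I would set up as follows: for $k\gg 0$ and $X\in|\Oc_Y(k)|$ very general, the operation $V\mapsto V\cap(X\times Y')=(p_V)^{-1}(X)$ is a bijection between the $n$-dimensional subvarieties of $Y\times Y'$ that are generically finite and surjective over $Y$ and the $(n-1)$-dimensional subvarieties of $X\times Y'$ that are generically finite and surjective over $X$, and it preserves the degree over the first factor. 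Thus $W\cap(X\times Y')$ is finite of degree $a$ over $X$; it has image a divisor $D\subseteq Y'$ over which it is finite of some degree $b'$, and the pair $(D,b')$ records what survives of the datum $b$. The real content is the inverse of the bijection, i.e. that a subvariety of $X\times Y'$ finite over $X$ extends uniquely to one finite over $Y$ --- a Noether--Lefschetz-type statement for cycles on the product $Y\times Y'$, and the step where ``$X$ very general and sufficiently ample'' is essential.

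\emph{Reduction to curves and endgame.} Granting the bijection, I would iterate it to cut the $Y$-factor down to a complete-intersection curve $C\subseteq Y$ of multidegree $(d,k_1,\dots,k_{n-1})$, so that $W$ is replaced by an irreducible curve $W'\subseteq C\times Y'$, still finite of degree $a$ over $C$, with image a curve $D\subseteq Y'$ over which it is finite of some degree $b'$; as $X$ varies $D$ sweeps out $Y'$, so $D$ passes through a general point of $Y'$, whence $\mathrm{gon}(D)\ge e-1$ by the work of Bastianelli et al., while the gonality of the complete-intersection curve $C$ is known. A correspondence-degree estimate for the pair of curves $(C,D)$ --- proved via the positivity of their canonical bundles and the behaviour of linear series under $W'\to C$ and $W'\to D$ --- bounds $ab'$ below, and a bookkeeping step reconstructing $b$ from $(D,b')$ and the slicing degrees then gives $ab\ge(d-1)(e-1)$, the contributions of the auxiliary degrees $k_i$ cancelling.

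\emph{Main obstacle.} I expect the principal difficulty to be the bijection itself: showing that a subvariety of $X\times Y'$ finite over the very general high-degree divisor $X\subseteq Y$ genuinely spreads out over all of $Y$, and that $X\times Y'$ carries no spurious subvarieties beyond the restrictions of those on $Y\times Y'$. I would attack this by a monodromy argument over the linear system $|\Oc_Y(k)|$, proving that the restriction maps on the relevant Chow groups (or on a suitable Hilbert scheme) of $Y\times Y'$ become surjective for $k\gg 0$ --- a Noether--Lefschetz phenomenon for the product. The second delicate point, the one that makes the answer exactly $(d-1)(e-1)$ rather than merely of that order, is to carry the triple of data --- degree over the first factor, image divisor on the second factor, degree over that divisor --- accurately through every slice so that all the auxiliary slicing degrees cancel; losing that control is presumably why the analogous statement for general complete intersections can only be established approximately.
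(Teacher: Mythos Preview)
This theorem is not proved in the present paper at all: it is quoted as the main result of Lazarsfeld--Martin \cite{LM}, and the paper's own contributions (Theorems~\ref{hypvers} and~\ref{unbancorrdeg}) are asymptotic analogues under the additional \emph{unbalanced} hypothesis $d\gg d'$. So there is no ``paper's own proof'' of this statement to compare against, and your proposal should be measured instead against what the slicing machinery of this paper can and cannot do.

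Your strategy imports exactly that machinery --- the bijection $\Phi$ of Theorems~\ref{main}/\ref{maink} --- and tries to push it to an \emph{exact} equality for \emph{all} $d,e\ge 2n+2$. That cannot work as written, for two related reasons. First, the bijection theorems require $d\gg e'$ (respectively $d'\gg e$): the uniqueness of the minimal curve through a Cayley--Bacharach fiber, which is what makes $\Phi$ injective and surjective, is only available once the slicing degree dominates the bound on the correspondence. Your scheme of cutting by $X\in|\Oc_Y(k)|$ with $k\gg 0$ does put you in that regime for each individual slice, but it also introduces the auxiliary degrees $k_1,\dots,k_{n-1}$ into the bookkeeping. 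Second, and fatally for exactness, the degree in the \emph{second} coordinate is only controlled up to a factor between $d-1$ and $d$ at each step (Proposition~\ref{degreelowerbound}); the first coordinate is preserved, but the second is not. So the ``cancellation of auxiliary slicing degrees'' you hope for in the endgame does not occur: after $n-1$ slices you recover $ab$ only up to a multiplicative error of order $\prod_i \frac{k_i}{k_i-1}$, which is harmless asymptotically but destroys any hope of hitting $(d-1)(e-1)$ on the nose. This is precisely why the paper's own theorems are stated with $\approx$ rather than $=$, and why the introduction explicitly flags that the exact statement fails without a very-general hypothesis on the \emph{pair} $(Y,Y')$.

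The actual argument in \cite{LM} is quite different: it works directly with the fibers $W_{y'}\subseteq Y$ of the correspondence over points of $Y'$, uses that these are Cayley--Bacharach for $K_Y=\Oc_Y(d-n-2)$, and then exploits the fine structure of Cayley--Bacharach subsets of $\PP^{n+1}$ (they lie on lines, then on low-degree rational normal curves, etc.) together with a specialization argument requiring $Y,Y'$ very general. No iterated slicing is involved.
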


They prove a bit more than this: the minimizing correspondence arises as a fiber product of the minimal coverings of projective space. 

In the same paper they ask the following question:
\begin{question}\label{qnLM}
Let $M$ be a smooth projective variety and $A$ an ample divisor on $M$. Let $d, d' \gg 0.$
    and $Y, Y'$ be  very general hypersurfaces in $M$ such that $Y \in |dA|, Y' \in |d' A|.$ Can one find conditions on $M, A$ such that $\cd(Y,Y') \approx \irr(Y) \irr(Y')?$
\end{question}

We note that the results of \cite{LUS} imply that $\irr(Y) = d \textrm{mfd}(M,A)$, where $\textrm{mfd}(M,A)$ refers to the \emph{minimum fibering degree} of $M$ with respect to $A$,  a quantity defined  and studied in the paper \cite{LUS}.

In view of this, Question \ref{qnLM} becomes equivalent to showing that   $$\cd(Y,Y') \approx \textrm{mfd}(M,A)^2 dd'.$$

We prove:
\begin{theorem}\label{hypvers}
    Let $M$ be a smooth projective variety and $A$ an ample divisor on $M$. Let $d \gg d' \gg 0.$ Let $Y, Y'$ be  smooth hypersurfaces in $M$ such that $Y \in |dA|, Y' \in |d' A|.$
    Then $$\cd(Y,Y') \approx Kdd'$$ for a fixed constant $K$ depending only on $M$ and $A$.
\end{theorem}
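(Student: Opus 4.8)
The plan is to pin down the constant as $K=\textrm{mfd}(M,A)^2$ and then to prove the two inequalities hidden behind the symbol $\approx$. By \cite{LUS} one has $\irr(Y)=d\cdot\textrm{mfd}(M,A)$ and $\irr(Y')=d'\cdot\textrm{mfd}(M,A)$ once $d,d'\gg 0$, so the asserted estimate is exactly $\cd(Y,Y')\approx\irr(Y)\irr(Y')$. The upper bound $\cd(Y,Y')\le\irr(Y)\irr(Y')$ is the elementary one: choose rational maps $f\colon Y\dra\PP^n$ and $g\colon Y'\dra\PP^n$ of degrees $\irr(Y)$ and $\irr(Y')$, let $W$ be an irreducible component of the fibre product $Y\times_{\PP^n}Y'$ dominating both factors (for generic enough $f,g$ this is irreducible), and note $\deg(W\to Y)\deg(W\to Y')\le\deg(g)\deg(f)=\irr(Y)\irr(Y')$. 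So all the content is in the reverse inequality: no correspondence should beat the fibre product construction by more than a lower-order amount in $d$ and $d'$.

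For the lower bound I would use the slicing bijection of this paper as a reduction. Given a correspondence $W\subseteq Y\times Y'$ with $p\colon W\to Y$ of degree $a$ and $q\colon W\to Y'$ of degree $b$ (one may assume $W$ dominates both factors), cut $Y$ by a general member $H$ of $|mA|$ for a fixed large $m$ and set $X=Y\cap H$; the bijection identifies $W$ with the sliced correspondence $W\cap(X\times Y')$ on the reduced pair, in a way that multiplies the relevant bidegrees by factors that are fixed intersection numbers on $M$, depending only on $A$ and $m$. Iterating this (and, when forced to by dimension, symmetrically slicing the $Y'$ factor) reduces the computation of $\cd(Y,Y')$ to the computation of $\cd$ for a pair of \emph{unbalanced complete intersections} — the regime treated separately in this paper, and the technical heart advertised in the abstract. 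Moreover, being general hyperplane sections inside huge linear systems, the reduced varieties are general enough that the complete-intersection estimate applies even though the original $Y,Y'$ are only assumed smooth; this is exactly why slicing lets one dispense with the "very general" hypothesis of \cite{LM}. Granting the complete-intersection case, one unwinds the (controlled) slicing factors and the accumulated error to recover $\cd(Y,Y')\gtrsim\textrm{mfd}(M,A)^2\,dd'$.

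The main obstacle is establishing the slicing bijection itself, which splits into two genuinely non-trivial points. First, one must show that the generic slice $W\cap(X\times Y')$ of an irreducible correspondence stays irreducible and has the expected dimension; this is a Bertini-type statement, but a delicate one, because $X$ is pulled back from the factor $Y$ rather than being a general divisor on $Y\times Y'$, so the standard Bertini irreducibility theorems do not apply directly and one must exploit the finiteness of $p$ together with the very strong positivity of the slicing system. Second — and here I expect the real work — one must show the slicing map is injective, i.e. that $W$ can be reconstructed from its slice; this should follow from the slicing system being positive enough that $W$ is rigid relative to, indeed swept out in a unique way by, its hyperplane sections, so that distinct correspondences cannot slice to the same variety. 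A secondary but real difficulty is bookkeeping: one must verify that the bidegree transformation under a slice carries no hidden multiplicative loss and that the total error incurred over all slices plus the base case is genuinely $o(dd')$. This last check is precisely where the hypothesis $d\gg d'\gg 0$ is consumed — enough ampleness on $Y$ to slice it repeatedly while landing, on the other side, squarely in the unbalanced complete-intersection regime where the positivity estimate is sharp.
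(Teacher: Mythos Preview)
Your identification of the constant is wrong, and this is not a bookkeeping issue but a genuine conceptual one. You pin down $K=\textrm{mfd}(M,A)^2$ and then aim to prove $\cd(Y,Y')\approx\irr(Y)\irr(Y')$. The paper explicitly refutes this: it exhibits $M=C_1\times C_2$ a product of curves (with $\irr(C_1)<\irr(C_2)$) and $A$ a suitable polarization for which the true constant satisfies $K\le k^2$ while $\textrm{mfd}(M,A)^2=k^2\,\irr(C_1)^2$. The point is that a dominant $1$-correspondence in $M\times M$ need not come from a rational map $M\dra\PP^n$; pulling back the diagonal along a map $M\to Z$ with $\dim Z=\dim M-1$ can do strictly better. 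So your upper bound $\cd(Y,Y')\le\irr(Y)\irr(Y')$ is correct but not sharp, and your target lower bound $\cd(Y,Y')\gtrsim\textrm{mfd}(M,A)^2\,dd'$ is simply false for some $(M,A)$.

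Relatedly, your slicing runs in the wrong direction. You propose to cut $Y$ further by members of $|mA|$ and reduce to a complete-intersection base case. The paper does the opposite: it \emph{lifts} a correspondence $W\subseteq Y\times Y'$ first to an uneven correspondence in $M\times Y'$ (inverting $\Phi$) and then to a dominant $1$-correspondence in $M\times M$ (inverting $\Psi$). The constant $K$ is then the minimum product-degree over all dominant $1$-correspondences in $M\times M$, a quantity intrinsic to $(M,A)$ and in general strictly smaller than $\textrm{mfd}(M,A)^2$. The hypothesis $d\gg d'\gg 0$ is what makes both lifts land in the regime where the bijection theorems apply. Your second paragraph correctly anticipates that Bertini-type irreducibility and reconstruction-from-slice are the hard parts of the bijection, but the bijection is used to go \emph{up} to $M$, not down, and there is no separate ``complete-intersection base case'' to invoke --- Theorem \ref{hypvers} \emph{is} the $r=1$ case of the complete-intersection theorem.
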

This can be seen as an answer to a corrected version of the above question of Lazarsfeld and Martin under the simplifying assumption that $d \gg d'.$
We can in fact prove much more:

\begin{theorem}\label{unbancorrdeg}
     Let $n,r \ge 1$.
     Let  $d_1 \gg d_1' \gg \dots d_r \gg d_r' \gg e, e' \ge 0$.
     Let $M, M'$ be smooth projective varieties of dimension $n$. Let $H, H'$ be very ample divisorson $M, M'$ respectively. Let $Y, Y'$ be complete intersections of codimension $r$ in $M, M'$, and let  the multidegrees of $Y$ and $Y'$  (with respect to $H$ and $H'$) be $(d_1, d_2 \dots, d_r)$ and $(d_1' ,\dots d_r')$ respectively. Assume furthermore, that we can find a chain of smooth complete intersections $Y \subseteq Y_1 \subseteq Y_2 \dots \subseteq Y_r = M $, and $Y' \subseteq Y'_1 \subseteq Y'_2 \dots \subseteq Y'_r = M' $ where $Y_i$ has multidegree $(d_{i+1}, \dots d_r)$ and $Y_i'$ has multidegree $(d_{i+1}', \dots d_r')$

     Then for some constant $K$, only depending on $M, M', A, A'$, $$\cd(Y,Y') \approx d_1 d_2 \dots d_r d_1' \dots d_r' K .$$
\end{theorem}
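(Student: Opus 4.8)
The plan is to prove Theorem~\ref{unbancorrdeg} by an inductive \emph{slicing} argument: one peels off, one at a time and in order of decreasing degree ($d_1$ first), the $2r$ hypersurface sections that cut $Y$ out of $M$ and $Y'$ out of $M'$. Each removal multiplies a suitably generalized correspondence degree by the degree of the section removed, and after all $2r$ removals what remains is a purely intersection-theoretic invariant of the pair $(M,M')$; that invariant is the constant $K$.

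To carry this out one first enlarges the definition of $\cd$ to pairs of varieties of unequal dimension, and more generally to a prescribed ``bi-fibration type'': for varieties $Z,Z'$ and integers $a,b\ge0$, let $\cd^{a,b}(Z,Z')$ be the minimum, over irreducible $W\subseteq Z\times Z'$ dominating $Z$ with $a$-dimensional general fibre and dominating $Z'$ with $b$-dimensional general fibre, of the product of the $H$-degree of the general fibre of $W\to Z$ and the $H'$-degree of the general fibre of $W\to Z'$; thus $\cd^{0,0}(Y,Y')=\cd(Y,Y')$. The engine of the proof is the bijection announced in the abstract: if $X\subseteq Z$ is a sufficiently ample smooth divisor of class $dH$, then $W\mapsto W\cap(X\times Z')$ is a bijection from the subvarieties of $Z\times Z'$ of a given bi-fibration type onto the subvarieties of $X\times Z'$ of the induced type; under it the $H$-degree of the fibre over $Z$ is unchanged (one is restricting a generically finite map to a general divisor in the source), while by B\'ezout a fibre over a point of $Z'$ meets $X$ in $d$ times its degree many points, so its degree gets multiplied by $d$. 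Passing to minima, $\cd^{a,b-1}(X,Z')=d\cdot\cd^{a,b}(Z,Z')$, and symmetrically $\cd^{a-1,b}(Z,X')=d'\cdot\cd^{a,b}(Z,Z')$ when the second factor is sliced by $X'\in|d'H'|$.

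Writing $Y_0:=Y$, $Y_0':=Y'$ and iterating this along the two chains --- slicing $Y_i$ to $Y_{i-1}\in|d_iH|$ and $Y_i'$ to $Y_{i-1}'\in|d_i'H'|$, alternately, for $i$ running from $1$ to $r$ --- telescopes to $\cd(Y,Y')=\cd^{0,0}(Y_0,Y_0')=\big(\prod_{i=1}^{r}d_id_i'\big)\,\cd^{r,r}(M,M')$, in which $\cd^{r,r}(M,M')$ --- the minimum over subvarieties of $M\times M'$ fibred in $r$-folds over each factor of the associated product of fibre-degrees --- is finite, at least $1$, and depends only on $M,M',H,H'$ and $r$; this is $K$. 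The upper bound in the theorem is the easy direction of this: fix a single cycle $W_M\subseteq M\times M'$ realizing $\cd^{r,r}(M,M')=K$ and intersect it successively with the $2r$ members of the two chains; as only this one fixed cycle and its finitely many iterated slices occur, the hypothesis that all the $d_i,d_i'$ are large makes each intersection transverse --- irreducible, of the expected dimension and bi-fibration type --- and the B\'ezout bookkeeping yields a correspondence on $Y\times Y'$ of product-degree $(1+o(1))\big(\prod_i d_id_i'\big)K$. (The sign ``$\approx$'' rather than ``$=$'' is forced by exactly such general-position error terms, which the extreme spacing $d_1\gg d_1'\gg d_2\gg\cdots$ is there to keep lower order relative to $\prod_i d_id_i'$.)

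The lower bound requires the bijection in full force: an arbitrary correspondence $W\subseteq Y\times Y'$ must, after the $2r$ reverse (``un-slicing'') steps, give rise to a subvariety of $M\times M'$ of the relevant bi-fibration type, whose product of fibre-degrees is then $\ge\cd^{r,r}(M,M')$, so that $\deg(W\to Y)\deg(W\to Y')\ge(1-o(1))\big(\prod_i d_id_i'\big)K$. I expect the \emph{surjectivity} of the slicing map to be the crux: one must show that every subvariety of $X\times Z'$ of the prescribed type is the restriction of one of $Z\times Z'$ of the shifted type --- a Lefschetz-type statement, roughly that a generically finite cover of $X$ which is fibred over $Z'$ is the restriction of such a cover of $Z$. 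Because the sections $Y_{i-1}\subseteq Y_i$ are only assumed smooth and are not chosen generically, a plain Bertini argument is not enough; one must instead feed in a Grothendieck--Lefschetz comparison of the fundamental groups and Picard groups of $X$ and $Z$ --- valid for every smooth $X\in|dH|$ once $d\gg0$ and $\dim X\ge2$ --- to transport the cover together with its fibration from $X$ up to $Z$, and one needs a uniform bound on the degree of near-optimal correspondences so that ``sufficiently ample'' can be realized at each of the $2r$ stages. Injectivity of the slicing map and the numerical bookkeeping should by comparison be routine.
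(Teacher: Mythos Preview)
Your overall architecture---peel off the $2r$ hypersurface sections one at a time, establish that each slicing step is a bijection on correspondences that scales one coordinate of the bidegree by the degree of the section, and thereby reduce $\cd(Y,Y')$ to a fixed invariant $\cd^{r,r}(M,M')$ of the ambient pair---matches the paper exactly. You also correctly identify surjectivity of the slicing map (the ``un-slicing'' direction) as the crux. The gap is in the mechanism you propose for it.

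Grothendieck--Lefschetz comparison of $\pi_1$ and $\Pic$ will not do the job. The correspondences in play are not \'etale covers of $X$: the projection $W\to X$ is merely a generically finite dominant map, typically with a great deal of ramification, and such maps are not classified by $\pi_1(X)$. Likewise the Picard comparison concerns line bundles, not subvarieties of a product. There is no Lefschetz-type statement of the form ``every generically finite cover of a smooth ample divisor extends to the ambient variety''---indeed this is false already for maps $\PP^1\to\PP^1$ viewed as correspondences on hyperplane sections of $\PP^2$. Your dimension caveat $\dim X\ge 2$ is also a real obstruction, since the theorem is meant to apply down to curves.

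What the paper actually uses is Cayley--Bacharach geometry. For a correspondence $W_X\subseteq X\times X'$ and general $z\in X'$, the fibre $W_X\cap(X\times\{z\})$ is a Cayley--Bacharach set for $K_X$ (this is Bastianelli's trace-of-differentials argument, which is also where smoothness of $X$ and characteristic zero enter). Once $d\gg e'$, a Cayley--Bacharach set of the relevant size in $\PP^N$ is forced to lie on a \emph{unique} curve of degree $\le e'$ (Theorem~1.13 of \cite{Ban}), and one checks this curve is contained in the ambient $Y$. Sweeping these curves out as $z$ varies reconstructs the lifted correspondence $W\subseteq Y\times X'$ with $\Phi(W)=W_X$. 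Injectivity is proved by the same Cayley--Bacharach uniqueness, so it is not quite routine either. The reduction from arbitrary $X'$ to $X'=\PP^n$ (where $K_{X'}$ has no sections) and the passage to higher $k$ are handled separately, the latter by induction using general hyperplane sections of $Y\times Y'$.
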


Let us note that the constant $K$, obtained in Theorem \ref{hypvers} is \emph{not}  equal to $\textrm{mfd} (M,A)^2$ for all choices of $(M,A)$, which is what it would be if $\cd(Y,Y') \approx  \irr(Y)\irr(Y').$ In the particular case when $M = \PP^n$ and $A$ is a hyperplane section,  our constant $K$ does agree with $\textrm{mfd}(M,A)^2$ (which is actually $1$ in that particular situation).

An explicit example where these constants differ is given in Section \ref{Prod}.

In fact, we can explain what this constant $K$ is in explicit geometric terms and show that the minimizing correspondence arises as a fiber product of sorts. 

Our degree assumptions in Theorem \ref{unbancorrdeg} and \ref{hypvers} should be seen as analogues of the \emph{unbalanced} assumptions  in \cite{LUS}. Indeed, the main motivation of the authors in starting this project was to try to generalize the ideas of \cite{LUS} to the context of correspondence degree, and to thereby  solve several  problems in this subject in the unbalanced situation. We essentially succeed in doing so and in the process we define several concepts that we think are of importance to understanding correspondences between hypersurfaces and complete intersections. 

We can  prove all of our results over an arbitrary algebraically closed field of characteristic 0. However, there are some serious issues regarding separability that arise when trying to generalize the results to positive characteristic, we will briefly remark on the difficulties that arise in doing this in the final section of this paper.

Our results also have the following advantage: We do not assume that our hypersurfaces $Y, Y'$ are very general, which appears to be the only setting in which results have been proven so far.

Let us remark  on the \emph{unbalanced} assumption appearing in Theorems \ref{hypvers} and \ref{unbancorrdeg}, i.e. the assumption that not only the numbers appearing in our multidegrees are large, but that they are all far away from each other in a particular way. This is a very strong assumption, but it is necessary as Theorems \ref{hypvers} and \ref{unbancorrdeg}  are \emph{false} without this assumption. This can be seen with the following simple example: Let $d= d'$, $Y= Y'$. Then $\cd(Y,Y') =1$, which is certainly not $\approx K d^2$ for large $d$. However, it is certainly possible that one could recover the statement of Theorem \ref{hypvers} say with just the assumption that $d, d' \gg 0$, if we add the additional assumption that the pair $(Y, Y')$ is very general.
\begin{remark}
    In this paper we will often say that two functions $f, g : \NN^n \to \NN$ are approximately equal to each other, or $f(d_1 ,\dots, d_n) \approx g(d_1, \dots d_n)$. By this we mean that as $d_1, \dots, d_n$ goes to \` infinity\' (it will be clear what we mean by this in the usage) we have the limit $\frac{f(d_1, \dots, d_n)}{g (d_1 ,\dots, d_n )}  \to 1.$

    We will also oftern say that a function is $O(1)$ if it bounded by some constant independendent of the $d_i, d'_i$.
\end{remark}

\subsection{Organisation}
This paper is organised into the following parts:
\begin{enumerate}
    \item We first define the notion of \emph{dominant} $k$- correspondences and define two slicing operations on them which we call $\Phi$ and $\Psi$. We then state the main (in our opinion) Theorems of this paper, Theorems \ref{main} ,\ref{maink}, \ref{mainpsi} and \ref{mainpsik} which essentially say that these slicing operations define bijections. 
    \item The Theorems of the introduction, i.e. Theorem \ref{hypvers} and Theorem \ref{unbancorrdeg} are then shown to immediately follow  from these Theorems on slicing operations.
    \item 
    The special case when $Y$ is a product of two curves is then discussed. It is here that we establish that our constant $K$ is not in general equal to $\textrm{mfd}(Y,\Lc)^2$.
    \item We then deal with the some consequences of our results to covers of hypersurfaces.
    \item  Theorem \ref{main} is then proven. This is perhaps the heart of the paper, the Theorems \ref{maink}, \ref{mainpsi}, \ref{mainpsik} will be proven using Theorem \ref{main} in an essential way.
    \item We then prove that the slicing operations $\Phi$ and $\Psi$ can be defined in families.
    \item Finally, we  use Theorem \ref{main} in families to show that Theorems \ref{maink}, \ref{mainpsi} and \ref{mainpsik} are true.

    \item Finally we comment on what goes wrong with our arguments in positive characteristic.
\end{enumerate}

\subsection{Acknowledgements}
The author like to thank Iason Kountouridis. Will Newman and Olivier Martin for helpful comments on various drafts of this paper. 
This project started out as a joint endeavour with David Stapleton and a lot of the credit for the initial ideas of this project must be given to him. 
\section{The main definitions}
We work over an algebraically closed field of characteristic $0$. Varieties are assumed to be varieties over the field unless otherwise mentioned.

Let $Y, Y'$ be smooth projective varieties of dimension $n$. Let $H, H'$ be very ample line bundles on $Y, Y'$ respectively.

\begin{definition}
    Let $k \in \NN$.
    A \emph{prime dominant k correspondence} between $Y$ and $Y'$ is an irreducible subvariety $W \subseteq Y \times Y'$ of dimension $n +k$ surjecting onto both $Y$ and $Y'$.
\end{definition}

An prime dominant $0$ correspondence is just an irreducible variety $W \subseteq Y \times Y'$ dominating both factors and such that $\dim W = \dim Y = \dim Y'$, precisely the kind of object that shows up when defining correspondence degree.

Associated to such a prime dominant $k$-correspondence $W$ is a pair of numbers which we will call its degree. We define this as follows: Let $W \subseteq Y \times Y'$ be a prime dominant correspondence.  Let $p : Y \times Y' \to Y$ and $p' : Y \times Y' \to Y'$ be the projection maps. Let $y \in Y, y' \in Y'$ be  general points. 
We then define the degree of $W$ by the formula  $$  \deg W = ( p^*(y) \cdot p'^{*}((H')^k), p'^*(y') \cdot p^{*}((H)^k)).$$  

We now define a general dominant correspondence.
\begin{definition}
    Given $k \in \NN$,
    a dominant $ k$ correspondence $W$ between $Y$ and $Y'$ is an integral linear combination of prime dominant $k$ correspondences with positive coefficients, i.e. $W$ is a  formal sum of the form $\sum_i a_i W_i$ where $a_i \in \NN$ and $W_i$ are prime dominant $k$ correspondences. We define the degree of $W$ to be $\sum_i a_i \deg(W_i).$

    We define the set $\DC^k _{\le (a,b)} (Y, Y')$ to be the set of dominant $k$ correspondences $W$ between $Y$ and $Y'$ such that $\deg W = (a_0,b_0)$ where $a_0 \le a$ and $b_0 \le b$.
\end{definition}

Let $d, d' >0$. Let $X, X'$ be smooth  hypersurfaces in $Y$ and $Y'$ respectively such that $X\in |d H|$ and $|X'| \in |d' H'|.$

We will now need to define \emph{uneven} dominant correspondences. 

\begin{definition}
    Let $ Y, X'$ be smooth projective varieties of dimension $n+1$ and $n$ respectively. 
    Let $k \in \NN$.
    A \emph{prime uneven dominant k correspondence} on $Y \times X'$ is an irreducible subvariety $W \subseteq Y \times X'$ of dimension $n+k+1$ surjecting on to both factors $Y$ and $X'$ under the respective projection maps.     
\end{definition}

Let $W$ be an prime uneven dominant $k$ correspondence between $Y$ and $X'$. Let $y \in Y , x' \in X'$ be general points. We define the degree of $W$  by the formula:
$$  \deg W = (\# (\{y\} \times (H')^k \cap p^{-1}(y)), \# (H^{k+1} \times \{y'\} \cap p'^{-1}(y'))),$$  we note that these intersection multiplicities are independent of the choice of $y,x'$ as long as they are general.

\begin{definition}
    Let $k \in \NN$.
    An uneven dominant $k$ - correspondence between $Y$ and $X'$ is is a formal positive sum of prime dominant $k$ correspondences, i.e. $W$ is a  formal sum of the form $\sum_{i=1}^k a_i W_i$ where $a_i \in \NN$ and $W_i$ are prime dominant $k$ correspondences. We define the degree of $W$ to be $\sum a_i \deg(W_i).$

    We define the set $\DC^k _{\le (a,b)} (Y, X')$ to be the set of dominant correspondences $W$ between $Y$ and $X'$ such that $\deg W = (a_0,b_0)$ where $a_0 \le a$ and $b_0 \le b$. 
\end{definition}

We note that we use the same notation $\DC (\_,\_)$ to denote the uneven and regular dominant correspondences- we hope this will not cause the reader any confusion. To understand whether we refer to uneven dominant correspondences in such a situation we just need to look at whether the varieties appearing in the parentheses have the same dimension or not. We will also often refer to both regular and uneven dominant correspondences as dominant correspondences without any adjectives, again we hope this does not cause any confusion.

Given a dominant correspondence $W$ and $a,b \in \NN$, we will say that $\deg W \ge (a,b)$ if $\deg W = (a',b')$ where $a' \ge a$ and $b' \ge b$.

\subsection{The slicing operations $\Phi$ and $\Psi$}
 In this subsection we will define two functions between sets of dominant correspondences arising from slicing a correspondence with a hypersurface.

 Let $Y,X'$ be smooth projective varieties of dimension $n+1$ and $n$ respectively. Let $H, H'$ be very ample divisors on $Y, X'$ respectively. Let $d >0$ and $X$ be a smooth hypersurface in $Y$ belonging to $|dH|.$  Given an uneven dominant correspondence $W$ between $Y$ and $X'$, one can do the following operation: we first form the intersection between $W$ and $X \times X'$.
 Let $W_1 , \dots W_r$ denote all the irreducible components of $W \cap X \times X'$ that dominate both $X$ and $Y'$. We then define $\Phi(W) = W_1 \cup \dots \cup W_r$. If $\Phi(W)$ is nonempty , then it is clearly a dominant correspondence between $X$ and $X'.$

 One immediately sees that given $e_1, e_2 \in \NN$, this construction $\Phi$ defines a function $$\DC^k_{\le (e_1, e_2)}(Y, Z) \to \DC^k_{\le (e_1, de_2)} (X,Z),$$ again assuming that the variety $\Phi(W)$ is nonemepty. Note that the domain of the function $\Phi$ is a set of uneven dominant correspondences, while the co-domain is a set of regular dominant correspondences.

Our main Theorems with regard to $\Phi$ are as follows:

\begin{theorem}\label{main}
    Let $d, e,  e' \in \NN$. Let $d \gg e'$ (implicitly  we also assume that $d \gg (H')^n$ ).

Then the map $\Phi: \DC^0_{\le (e, e')}(Y, X') \to \DC^0_{\le (e, de')} (X,X') $ as defined above is a bijection.  

Furthermore $\Phi$ takes prime correspondences to prime correspondences.
\end{theorem}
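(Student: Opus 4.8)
The plan is to construct an explicit inverse to $\Phi$. Given a dominant $0$-correspondence $V \in \DC^0_{\le(e,de')}(X,X')$, one wants to "un-slice" it: find the unique uneven dominant $0$-correspondence $W$ between $Y$ and $X'$ with $\Phi(W) = V$. The natural candidate is to take $V \subseteq X \times X' \subseteq Y \times X'$ and let $W$ be obtained from $V$ by a suitable closure/saturation operation inside $Y \times X'$ — concretely, for each prime component $V_i$ of $V$ one expects a single prime component $W_i \subseteq Y\times X'$ of dimension $n+1$ whose intersection with $X \times X'$ recovers $V_i$ (together, possibly, with components not dominating both factors, which $\Phi$ discards). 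The first step is therefore to make precise how to produce $W_i$ from $V_i$: since $V_i$ dominates $X'$ and $\dim X' = n$, we have $\dim V_i = n$, and we want $W_i$ with $\dim W_i = n+1$ dominating $Y$; the key geometric input is that a general fiber of $V_i \to X'$ is a $0$-cycle on $X$, and slicing the $(n+1)$-dimensional fiber of $W_i \to X'$ (a subvariety of $Y$) by $X \in |dH|$ should return exactly that $0$-cycle when $d \gg 0$.

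The technical heart is the following: work fiberwise over $X'$, or rather over the generic point $\eta'$ of $X'$. Then $V_{\eta'}$ is a $0$-cycle on $X_{\eta'}$ (a variety over the function field $K = k(X')$) of degree $\le e$, and we must show it extends uniquely to a curve $C \subseteq Y_{\eta'}$ of $H$-degree $\le e$ with $C \cap X = V_{\eta'}$ as cycles, using $d \gg e'$ — here $e'$ controls the "other" degree of $W$, i.e. the number of points of $C$ over a general point of $Y$, equivalently the $H^{n+1}$-degree of the total fiber. This is essentially a Cayley–Bacharach / lifting-sections argument: a length-$\le e$ subscheme of the hypersurface $X$ imposes independent conditions and spans a unique minimal-degree curve in $Y$ once $d$ is large compared to $e'$, because any two such curves would differ by a curve contained in $X$, which (by the degree bookkeeping, since its $X$-degree would be $\ge d$ times its "horizontal" degree) is impossible below the degree threshold. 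One then spreads out from $\eta'$ to all of $X'$ and takes closures to get $W \subseteq Y \times X'$, checks it is irreducible when $V$ is (giving the "prime to prime" clause), and verifies the degree bound lands in $\DC^0_{\le(e,e')}(Y,X')$.

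It then remains to check the two compositions are the identity. That $\Phi$ applied to this $W$ returns $V$ is essentially the defining property of the construction (the slice $W \cap X\times X'$ contains $V$, and the extra components are precisely those not dominating both factors, which $\Phi$ throws away — one must confirm no component of $V$ gets absorbed or split, again using $d \gg e'$ so that the intersection $W\cap X\times X'$ has the expected dimension and the generic-point analysis applies). Conversely, starting from an uneven $W$, slicing to $\Phi(W) = V$ and then un-slicing: here one needs that $W$ is already "saturated," i.e. $W$ is the unique $(n+1)$-dimensional thing restricting to $V$ — this is exactly the uniqueness half of the lifting argument above, so it is not a new difficulty. I expect the main obstacle to be the uniqueness of the lift over the non-closed field $K = k(X')$ together with controlling what happens over the non-generic (e.g. codimension-one) points of $X'$: one must rule out the appearance of vertical components or degenerations of the fiber curve that would spoil either irreducibility of $W$ or the degree bound, and this is where the hypothesis $d \gg e'$ (and $d \gg (H')^n$) has to be used quantitatively, presumably via an effective bound on how large $d$ must be so that a cycle of bounded degree on $X$ cannot bound two distinct low-degree curves in $Y$.
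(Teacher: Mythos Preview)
Your overall shape---construct an inverse to $\Phi$ by lifting each fiber of $V \to X'$ to a curve in $Y$, then spread out via a Hilbert-scheme map---is exactly the paper's strategy. But two real issues prevent the sketch from going through as written.

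First, the degree bookkeeping is transposed. The fiber $V_{\eta'}$ of $V \to X'$ has length bounded by the \emph{second} coordinate $de'$, not by $e$; the curve $C \subseteq Y_{\eta'}$ you seek must have $H$-degree $\le e'$, not $\le e$. (The first coordinate $e$ bounds the degree of the projection to $Y$ or $X$, which plays no role in the fiber-over-$X'$ picture.)

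Second, and more seriously, you have not identified the mechanism that produces the curve. You write that a short subscheme of $X$ ``imposes independent conditions and spans a unique minimal-degree curve in $Y$,'' but an arbitrary collection of $\approx de'$ points on $X$ has no reason whatsoever to lie on a curve of degree $\le e'$ in the ambient $Y$. What makes this work is that the general fiber $V_z = V \cap (X \times \{z\})$ is a \emph{Cayley--Bacharach set for $K_X$}: this is a nontrivial input coming from Bastianelli's trace-of-differentials argument, and it is precisely this CB property that lets one invoke the structure theorem for such sets (the paper cites \cite{Ban}, Theorem~1.13) to obtain the unique minimal curve $C_z \subseteq Y$ of degree $\le e'$ containing $V_z$. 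Bastianelli's argument, moreover, requires $H^0(X', K_{X'}) = 0$; this is why the paper first reduces to the case $X' = \PP^n$ by pushing forward along a pair of general projections $f_1, f_2 : X' \to \PP^n$ and then pulling the answer back. Your proposal over the generic point of an arbitrary $X'$ does not have direct access to the CB property of the fibers, so the existence step has no engine. Finally, your uniqueness sketch (``two such curves would differ by a curve contained in $X$'') is not the actual argument: uniqueness comes from the structure theorem in \cite{Ban}, via a Bezout-type count showing that any curve of degree $\le e'$ meeting the CB set must contain every component of the minimal one.
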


\begin{theorem}\label{maink}
    Let $d \gg e'$ (implicitly  we also assume that $d \gg |H'|^n$ ). Let $k \ge 1$

Then the map $\Phi: \DC^k_{\le (e, e')}(Y, X') \to \DC^k_{\le (e, de')} (X,X') $ as defined above is a bijection.  

Furthermore $\Phi$ takes prime correspondences to prime correspondences.
\end{theorem}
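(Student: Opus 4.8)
The plan is to deduce Theorem \ref{maink} from Theorem \ref{main} by a dimension-reduction argument, cutting down the extra $k$ dimensions with generic hyperplane sections. Concretely, given an uneven dominant $k$-correspondence $W \subseteq Y \times X'$ (of dimension $n+k+1$), I would choose $k$ very general divisors $G_1, \ldots, G_k \in |H'|$ on $X'$ and set $Z_j = G_1 \cap \cdots \cap G_j$, so that $Z_k$ is a smooth subvariety of $X'$ of codimension $k$, itself a complete intersection with the induced polarization. Pulling back via $p' \colon Y \times X' \to X'$ and intersecting with $W$, the Bertini-type genericity of the $G_i$ guarantees that $W \cap p'^{-1}(Z_k)$ is again irreducible (when $W$ is prime) of dimension $n+1$ and still dominates $Y$ and $Z_k$; this is the passage from a $k$-correspondence to a $0$-correspondence. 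The definition of the degree of a $k$-correspondence — namely $(\#(\{y\}\times (H')^k \cap p^{-1}(y)), \#(H^{k+1}\times\{y'\} \cap p'^{-1}(y')))$ — is precisely engineered so that $\deg W$ as a $k$-correspondence between $Y$ and $X'$ equals $\deg(W \cap p'^{-1}(Z_k))$ as a $0$-correspondence between $Y$ and $Z_k$. Moreover the slicing $\Phi$ commutes with this restriction, since $W \cap (X \times X')$ followed by restriction to $p'^{-1}(Z_k)$ agrees with first restricting to $p'^{-1}(Z_k)$ and then intersecting with $X \times Z_k$ (both equal $W \cap (X \times Z_k)$ inside $Y \times X'$).

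First I would make precise the compatibility of $\Phi$ with the operation $W \mapsto W \cap p'^{-1}(Z_k)$, checking that irreducible components dominating $X$ and $X'$ restrict to irreducible components dominating $X$ and $Z_k$ (using genericity of the $G_i$ relative to the finitely many components of $W \cap (X\times X')$), and conversely that every component over $Z_k$ arises this way. This would set up a commuting square relating $\Phi$ on $k$-correspondences between $Y$ and $X'$ to $\Phi$ on $0$-correspondences between $Y$ and $Z_k$. Since $Z_k$ is again a smooth projective variety with a very ample polarization and $(H'|_{Z_k})^n$ is still $O(1)$ — indeed $(H'|_{Z_k})^n = (H')^n$ by the projection formula for complete intersections — the hypothesis $d \gg (H'|_{Z_k})^n$ needed to invoke Theorem \ref{main} follows from $d \gg (H')^n$. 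Theorem \ref{main} then gives that $\Phi$ is a bijection on the $0$-correspondence level, and the commuting square together with the fact that the restriction operation is itself invertible (one recovers $W$ from $W \cap p'^{-1}(Z_k)$ by taking the closure of its preimage, or rather by a deformation/incidence argument as the $G_i$ vary) transports bijectivity back up to the $k$-correspondence level. Preservation of primality is inherited the same way, combined with the primality statement already in Theorem \ref{main}.

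I expect the main obstacle to be the assertion that $W$ can be recovered from its generic slice $W \cap p'^{-1}(Z_k)$, i.e. that the restriction map on correspondences is a bijection onto its image and compatible with $\Phi$ in both directions. One cannot literally invert ``intersect with $p'^{-1}(Z_k)$'' on a single correspondence; rather one must argue that as $(G_1,\ldots,G_k)$ ranges over a dense open subset of $|H'|^k$, the sliced correspondences sweep out $W$, and that the collection of all such slices determines $W$ uniquely. This is where the families version of the constructions (promised in the organisational outline as a later section, where $\Phi$ and $\Psi$ are defined in families) does the real work: one builds the universal family of slices over the parameter space $U \subseteq |H'|^k$ of generic tuples, shows it is flat with irreducible fibres, and identifies $W$ with (the image of) the total space. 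Bijectivity of $\Phi$ on $k$-correspondences then follows from bijectivity of $\Phi$ fibrewise (Theorem \ref{main}) by a spreading-out argument. An alternative, possibly cleaner route avoiding families would be to induct on $k$: slice off just one $G \in |H'|$ to reduce a $k$-correspondence on $Y \times X'$ to a $(k-1)$-correspondence on $Y \times G$, apply the inductive hypothesis (Theorem \ref{maink} in dimension $n-1$ — legitimate since $G$ is again a smooth complete intersection with $(H'|_G)^{n-1} = (H')^n$), and check the base case $k=1$ directly against Theorem \ref{main}; here the inductive step still requires the same compatibility-and-inversion lemma but only one hyperplane at a time, which is more tractable.
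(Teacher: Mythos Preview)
Your slicing is on the wrong factor, and this breaks the dimension hypothesis you need to invoke Theorem \ref{main}. In the paper's setup an uneven $0$-correspondence is between varieties whose dimensions differ by exactly one: $\dim Y = n+1$, $\dim X' = n$. If you cut $X'$ down to $Z_k = G_1\cap\cdots\cap G_k$ of dimension $n-k$, then $W\cap p'^{-1}(Z_k)$ sits in $Y\times Z_k$ with $\dim Y - \dim Z_k = k+1$, and Theorem \ref{main} simply does not apply to that pair. (A symptom: you write $(H'|_{Z_k})^n = (H')^n$, but $(H'|_{Z_k})^n = 0$ once $k\ge 1$; the top self-intersection on $Z_k$ is $(H'|_{Z_k})^{n-k}$.) Your inductive variant has the same defect: slicing off one $G\in|H'|$ lands you in $Y\times G$ with dimension gap $2$, and neither Theorem \ref{main} nor an inductive instance of Theorem \ref{maink} is stated for that. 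To salvage your approach you would have to slice $Y$ (and hence $X$) simultaneously by $k$ hyperplanes in $|H|$, and then prove that this two-sided slicing is invertible and commutes with $\Phi$ --- a substantially harder recovery problem than the one you sketch.

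The paper avoids this by slicing with a general divisor $V\in|D|$ on the \emph{product} $Y\times X'$, where $D = p^*H + p'^*H'$. Intersecting $W$ with $V$ drops $\dim W$ by one while leaving both $Y$ and $X'$ untouched, so $V\cap W$ is an honest uneven $(k-1)$-correspondence in $\DC^{k-1}(Y,X')$ and the induction on $k$ bottoms out at Theorem \ref{main} with the original pair. Injectivity then follows because $\Phi(V\cap W) = V\cap\Phi(W)$ for general $V$ (the key compatibility, Proposition \ref{capPhi}), so $\Phi(W^1)=\Phi(W^2)$ forces $V\cap W^1 = V\cap W^2$ for all general $V$, hence $W^1=W^2$. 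Surjectivity uses a general \emph{pencil} $(V_t)_{t\in\PP^1}$ in $|D|$: given $W_X\in\DC^k(X,X')$, one applies the inductive hypothesis to each slice $V_t\cap W_X$ to get $W_t\in\DC^{k-1}(Y,X')$, assembles the $W_t$ into a family over (an open in) $\PP^1$ via the families machinery (Proposition \ref{Phifam} and the stratification of the relevant Hilbert schemes), and takes the image in $Y\times X'$. This is where the ``families'' step genuinely enters --- not to recover $W$ from a single slice, but to glue the one-parameter family of lifts into a single correspondence and verify $\Phi$ of the result is exactly $W_X$.
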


We note that we have singled out the case $k=0$ into Theorem \ref{main}. This is because we will very heavily use Theorem \ref{main} in proving both Theorem \ref{maink} and several of our other Theorems and we thought it would be more sensible to make it a separate statement. 

Let us now define another slicing operation, which we will denote by $\Psi$.

Let $Y, Y'$ be smooth projective varieties of dimension $n+1$. Let $H,H'$ be very ample divisors on $Y,Y'$ respectively. Let $d' >0$. Let $X'$ be a smooth hypersurface in $Y'$ belonging to $|d'H'|$. Let $W$ be a dominant correspondence between $Y$ and $Y'$. Then we can perform the following operation: we first form the intersection $W \cap (Y \times X')$. We define $\Psi(W)$ to be the union of all components of $W \cap (Y \times X')$ dominating both $Y$ and $X'$.The variety $\Psi(W)$, if nonemepty, is an uneven dominant correspondence between $Y$ and $X'$

We note now that given $k,e,e' \in \NN$, and assuming that the varieties $\Psi(W)$ are nonempty,  $\Psi$ actually defines a function $$\Psi : \DC^{k+1}_{\le (e,e')} (Y,Y') \to \DC^{k}_{\le (d'e,e')} (Y, X').$$ 
Let us now state our main Theorems for this function $\Psi.$
\begin{theorem}\label{mainpsi}
Let $e,e' \in \NN$
    For $d'\gg e$, the map  $$\Psi: \DC^1_{\le (e,e')} (Y,Y') \to \DC^{0}_{\le (d'e,e')} (Y, X')$$  is a bijection.
    Furthermore $\Psi$ takes prime correspondences to prime correspondences.
\end{theorem}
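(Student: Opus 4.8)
The plan is to deduce Theorem \ref{mainpsi} from Theorem \ref{main} by exploiting the relationship between the two slicing operations: slicing $W \subseteq Y \times Y'$ by $Y \times X'$ is, from the point of view of the \emph{second} factor, exactly the same kind of operation as $\Phi$ performed on the product in reverse order. More precisely, given a dominant $1$-correspondence $W$ between $Y$ and $Y'$ (both of dimension $n+1$), I would view $W$ as an uneven dominant $0$-correspondence between $Y'$ and... no — the dimensions are off, so instead the cleanest route is: regard $Y \times Y'$ with the roles of the factors swapped, so $W \subseteq Y' \times Y$ is an uneven dominant correspondence in the sense of the definition with ``$Y$'' there played by $Y'$ (dimension $n+1$) and ``$X'$'' there played by... again this does not quite match because $Y$ also has dimension $n+1$, not $n$. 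So the honest statement is that $\Psi$ is genuinely the $\Phi$-type construction but where we slice the \emph{ambient} rather than having already-sliced the first factor; the bijectivity proof should therefore be a direct adaptation of the proof of Theorem \ref{main}, not a formal corollary.

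Accordingly, I would structure the argument in the same three moves as (the anticipated proof of) Theorem \ref{main}. First, \textbf{well-definedness and degree bookkeeping}: check that for $W$ prime of degree $(a,b) \le (e,e')$, the scheme $W \cap (Y \times X')$ has a component dominating both $Y$ and $X'$ (nonemptiness), that such a component is unique and has the expected degree, with first coordinate multiplied by $d'$ because a general fiber of $\Psi(W) \to X'$ is cut out of the corresponding fiber of $W \to Y'$ by the divisor $X' \in |d'H'|$, and second coordinate unchanged. This uses $d' \gg e$ (together with the implicit $d' \gg (H)^{n+1}$ or similar) to guarantee that no component of the intersection fails to dominate, via a Bertini-type / dimension-count argument on the incidence variety of $(W, X')$. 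Second, \textbf{injectivity}: if $\Psi(W_1) = \Psi(W_2)$ as correspondences, I would argue that $W_1$ and $W_2$ agree generically over $Y'$; since $X'$ is very ample of high degree $d'$, a $1$-correspondence is determined by its slice over $X'$ — more precisely the slice $\Psi(W)$ determines, fiber by fiber over a general $y' \in X'$, enough of the fiber $p'^{-1}(y') \cap W$ that, letting $y'$ vary over $X'$ and using that $X'$ spans $Y'$ in a suitable sense for $d' \gg 0$, one recovers $W$. Third, \textbf{surjectivity}: given a prime uneven dominant $0$-correspondence $V$ between $Y$ and $X'$ of degree $\le (d'e, e')$, one takes its closure $\Zbar{V}$ inside $Y \times Y'$ (via $X' \hookrightarrow Y'$) and shows this closure is a dominant $1$-correspondence between $Y$ and $Y'$ with $\Psi$ of it equal to $V$, and of the right degree $\le (e, e')$ — the first coordinate dropping by a factor of $d'$ because $V$ lives over the degree-$d'$ divisor. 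The primality claim is immediate from the construction in the injective and surjective directions once the generic uniqueness is established.

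The \textbf{main obstacle} I expect is the surjectivity step, specifically controlling the degree of the closure $\Zbar{V} \subseteq Y \times Y'$ of a given uneven correspondence $V$, i.e. showing that when $V$ has first-coordinate degree $\le d'e$ its closure has first-coordinate degree $\le e$. A priori the closure could ``spread out'' over $Y'$ in a way that is not controlled by how $V$ sits over $X'$; one must use the hypothesis $d' \gg e$ crucially here to rule out the existence of a $1$-correspondence of small degree over $Y'$ whose restriction to $X'$ has first degree much smaller than $d'$ times its own — this is exactly the kind of ``a subvariety of $Y\times Y'$ meeting $Y \times X'$ in something of unexpectedly low degree'' pathology that the unbalanced hypotheses are designed to exclude, and I would expect Theorem \ref{main} (applied with the factors in the appropriate arrangement, or an intermediate lemma extracted from its proof) to be invoked precisely at this point. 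A secondary technical point is that $\Psi(W)$ as defined is only the union of the dominating components, so to match degrees one must separately argue that the non-dominating components carry none of the relevant intersection class — again a consequence of generality of the general fiber and the high degree of $X'$.
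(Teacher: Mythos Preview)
Your surjectivity step contains a genuine gap that makes the approach unworkable. You propose, given $V \subseteq Y \times X'$, to take its closure $\overline{V}$ inside $Y \times Y'$. But $Y \times X'$ is already closed in $Y \times Y'$ (since $X'$ is a hypersurface in $Y'$), so this closure is just $V$ itself, still of dimension $n+1$. It cannot be a dominant $1$-correspondence between $Y$ and $Y'$, which must have dimension $n+2$. There is no way to produce the extra dimension by a closure operation; the preimage $W$ has to be \emph{constructed}, and this is where the real content lies. Your discussion of the ``main obstacle'' (controlling the degree of the closure) is therefore aimed at the wrong target: the problem is not degree control but that the candidate object does not exist at all.

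The paper's actual mechanism, which you did not identify, is to slice the \emph{first} factor $Y$ by hyperplanes $V \in |H|$. Once $Y$ is replaced by such a $V$ of dimension $n$, the pair $(Y', V)$ has the shape required by Theorem \ref{main} (with the roles of the two factors exchanged): $\Phi$ becomes the operation of intersecting a correspondence in $Y' \times V$ with $X' \times V$, and Theorem \ref{main} gives a bijection there since $d' \gg e$. For injectivity one takes a single general $V$ and observes that $\Psi(W^1)=\Psi(W^2)$ forces $\Phi$ of the dominating parts of $W^i\cap(V\times Y')$ to agree, whence $W^1\cap(V\times Y')=W^2\cap(V\times Y')$ by Theorem \ref{main}, for every general $V$. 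For surjectivity one takes a \emph{pencil} $(V_t)$ in $|H|$, lifts each slice $W_{X'}\cap(V_t\times X')$ via Theorem \ref{main} to a correspondence $W_t\subseteq V_t\times Y'$, organizes these into a family over the pencil using the relative Hilbert scheme and the family version of $\Phi$, and takes the image in $Y\times Y'$; a separate argument then checks $\Psi$ of this image is exactly $W_{X'}$. Your injectivity sketch (``$X'$ spans $Y'$'') is too vague to be a proof and also misses this reduction-to-$\Phi$ idea.
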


\begin{theorem}\label{mainpsik}
Let $k ,e, e' \in \NN$.
For $d'\gg e$, the map  $$\Psi: \DC^{k+1}_{\le (e,e')} (Y,Y') \to \DC^{k}_{\le (d'e,e')} (Y, X')$$  is a bijection.
Furthermore $\Psi$ takes irreducible correspondences to irreducible correspondences.
\end{theorem}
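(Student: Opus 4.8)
The plan is to deduce Theorem \ref{mainpsik} from Theorem \ref{mainpsi} by an induction on $k$, exactly in the spirit of how one expects Theorem \ref{maink} to be deduced from Theorem \ref{main}. The base case $k=0$ is precisely Theorem \ref{mainpsi}. For the inductive step, the first thing I would do is recognize that a dominant $(k+1)$-correspondence on $Y \times Y'$, restricted to $Y \times X'$, sees the hyperplane class $H'$ in a way that lowers the ``fiber dimension parameter'' by one: a general fiber of $W \to Y$ has dimension $k+1$, and intersecting with $Y \times X'$ (where $X' \in |d'H'|$) cuts this down to dimension $k$, so that $\Psi(W)$ is genuinely an uneven dominant $k$-correspondence. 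The degree bookkeeping is already recorded in the excerpt: $\Psi$ sends $\DC^{k+1}_{\le(e,e')}(Y,Y')$ into $\DC^{k}_{\le(d'e,e')}(Y,X')$, the factor $d'$ appearing because the $(H')^{k+1}$-term picks up a $d'$ when one of the $H'$'s is traded for the class of $X'$.

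The heart of the argument is injectivity and surjectivity. For injectivity, I would argue that a prime $(k+1)$-correspondence $W$ is recovered from $\Psi(W)$ because $W$ is the unique irreducible subvariety of $Y \times Y'$ of the correct dimension containing $\Psi(W)$ and mapping properly to both factors — here the hypothesis $d' \gg e$ is essential, since a large $d'$ forces $X'$ to meet the generic fiber of $W\to Y$ transversally in enough points that $\Psi(W)$ ``spans'' $W$; concretely, $W$ is the closure of the union of fibers of $\Psi(W)$ swept out as one moves the slice, or more robustly, $W$ is characterized as the component of $\Psi(W) \times_{X'} (\text{something})$ after re-expanding. For surjectivity, given an uneven dominant $k$-correspondence $V$ on $Y \times X'$, one must produce $W$ on $Y \times Y'$ with $\Psi(W) = V$; the natural candidate is the ``cone'' or sweep of $V$ under the family of hypersurfaces in $|d'H'|$ through a moving point, and one then checks the dimension count and that $\Psi$ of this candidate returns $V$ up to the components that fail to dominate. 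I expect that, rather than doing this by hand, the cleanest route is to invoke the family version: the excerpt's outline (item 6) promises that $\Phi$ and $\Psi$ are defined in families, and Theorem \ref{main}/\ref{mainpsi} applied fiberwise over a base parametrizing the extra hypersurface sections should upgrade the $k=0$ bijection to the $k\ge 1$ bijection by a spreading-out / specialization argument.

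In more detail, here is the order I would carry out the steps. First, reduce to prime correspondences: since both $\DC$ sets are free $\NN$-modules on their prime elements and $\Psi$ is additive, it suffices to show $\Psi$ restricts to a bijection between prime $(k+1)$-correspondences of bounded degree and prime (uneven) $k$-correspondences of bounded degree, which is the last sentence of the theorem. Second, set up the incidence correspondence: fix a general pencil (or larger linear system) of divisors in $|d'H'|$ and consider $W$ together with the induced family of slices $W \cap (Y \times X'_t)$; the generic slice is $\Psi(W)$ and the family exhibits $W$ as swept out by these slices, which will give injectivity and the well-definedness of an inverse. Third, for surjectivity, start from $V \subseteq Y\times X'$, embed $X' \hookrightarrow Y'$, take the closure $\overline{V}$ of $V$ in $Y\times Y'$, and argue that $\overline V$ fails to have the right dimension — so instead one must take the sweep $W := \overline{\bigcup_t \tau_t(V)}$ where $\tau_t$ ranges over a suitable family of deformations of $X'$ inside $Y'$, then verify $\dim W = n + k + 2$ and $\Psi(W) = V$. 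Fourth and last, handle the degree bounds, using $d' \gg e$ to ensure no stray components of the various intersections dominate, and to ensure the prime-to-prime claim (a sweep of an irreducible variety along an irreducible family is irreducible).

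The main obstacle, I expect, is surjectivity — specifically, showing that every uneven dominant $k$-correspondence $V$ on $Y \times X'$ actually arises as $\Psi(W)$ for a genuine (irreducible, correctly-dimensioned, bi-dominant) $W$ on $Y\times Y'$, and that the reconstruction $W \mapsto \Psi(W) \mapsto W$ is the identity. The subtlety is that $V$ lives on $Y \times X'$ where $X'$ is a fixed hypersurface, so ``undoing'' the slice requires deforming $X'$ within $Y'$, and one must check that $V$ deforms along with it and sweeps out something irreducible of exactly the right dimension without picking up lower-dimensional junk or extra components that fail to dominate $Y$ or $Y'$. This is exactly the kind of place where the unbalanced hypothesis $d' \gg e$ does the work: it guarantees that the linear system $|d'H'|$ is so positive that the relevant universal family over an open subset of $|d'H'|$ has irreducible total space and the fiberwise bijections of Theorem \ref{mainpsi} glue. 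I would therefore phrase the proof so that the genuinely new content is isolated into one lemma — that the sweep construction is inverse to slicing — and prove that lemma by base-changing the $k=0$ statement along the universal family, rather than re-running the geometry from scratch.
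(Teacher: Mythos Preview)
Your inductive framing is right, but the mechanism you propose for the inductive step is not the one the paper uses, and as written it has a real gap. You want to vary $X'$ in a pencil $X'_t \subseteq |d'H'|$ and sweep. For injectivity this does not help: from $\Psi_{X'_0}(W^1)=\Psi_{X'_0}(W^2)$ you only learn that $W^1$ and $W^2$ agree on the single slice $Y\times X'_0$, and you have no control over the other slices $Y\times X'_t$, so the sweep argument cannot conclude $W^1=W^2$. For surjectivity the problem is sharper: given $V\subseteq Y\times X'$, your ``sweep'' $W=\overline{\bigcup_t \tau_t(V)}$ is not well-defined, because $\tau_t$ deforms $X'$, not $V$, and there is no canonical way to transport $V$ to $Y\times X'_t$. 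You acknowledge this (``one must check that $V$ deforms along with it''), but your proposed fix --- base-changing Theorem~\ref{mainpsi} along the universal family of $X'_t$ --- cannot produce such a deformation: Theorem~\ref{mainpsi} gives, for each fixed $X'_t$, a bijection between $\DC^1(Y,Y')$ and $\DC^0(Y,X'_t)$, but it gives no map between $\DC^k(Y,X'_0)$ and $\DC^k(Y,X'_t)$.

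The paper avoids this entirely by slicing in a different direction. Set $D=p^*H+(p')^*H'$ on $Y\times Y'$ and take a general $V\in|D|$; intersecting with $V$ sends $\DC^{k+1}(Y,Y')\to\DC^{k}(Y,Y')$ and $\DC^{k}(Y,X')\to\DC^{k-1}(Y,X')$ while keeping $Y,Y',X'$ fixed, and one checks $\Psi(V\cap W)=V\cap\Psi(W)$. Injectivity then follows at once by induction: if $\Psi(W^1)=\Psi(W^2)$, slice by $V$ and apply the inductive hypothesis to conclude $V\cap W^1=V\cap W^2$ for general $V$, hence $W^1=W^2$. Surjectivity takes a pencil $V_t\in|D|$, slices the given $W_{X'}\in\DC^k(Y,X')$ to get $(k-1)$-correspondences, lifts each one by the inductive hypothesis to a $k$-correspondence in $Y\times Y'$, and uses the family version of $\Psi$ together with a Hilbert-scheme stratification to assemble these lifts into a single $W\in\DC^{k+1}(Y,Y')$. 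The point is that the hyperplane $V$ acts on both sides of the bijection and commutes with $\Psi$, which is exactly what your deformation of $X'$ fails to do.
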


We single out the case $k=0$ as its own Theorem for similar reasons to those for singling out Theorem \ref{main}.
\section{Proof of Theorem \ref{unbancorrdeg}}
In this section we prove Theorem \ref{unbancorrdeg} assuming that Theorems \ref{main}, \ref{maink}, \ref{mainpsi} and \ref{mainpsik} are true.
We begin with the following proposition.
\begin{proposition}\label{unban}
    Let $n,r \ge 1$.
    Let $d_1 \gg d_1' \gg \dots d_r \gg d_r' \gg e, e' >0$.
    Let $Y, Y'$ be smooth projective varieties of dimension $n$. with very ample divisors $H, H'$  respectively. Let $X, X'$ be complete intersections of codimension $r$ in $Y, Y'$, and let  the multidegrees of $X$ and $X'$ be $(d_1, d_2 \dots, d_r)$ and $(d_1' ,\dots d_r').$ Assume furthermore that we can find a chain of smooth complete intersections $X \subseteq X_1 \subseteq X_2 \dots \subseteq X_r = Y $, and $X' \subseteq X'_1 \subseteq X'_2 \dots \subseteq X'_r = Y' $ where $X_i$ has multidegree $(d_{i+1}, \dots d_r)$ and $X_i'$ has multidegree $(d_{i+1}', \dots d_r')$

     Then any dominant 0 correspondence $W$ between $X$ and $X'$ of degree $ \le (d_1' \dots d_r' e, d_1 \dots d_r e)$ is obtained from slicing an $r$ correspondence of degree $\le (e,e')$ between $Y$ and $Y'$, more explicitly there is a dominant  $r$ correspondence $W_r$, of appropriate degree such that $(\Psi \circ \Phi)^r (W_r) = W$ .

    As a special case, if $X,X'$ are curves, then any such $W$ is obtained by slicing a divisor in $Y \times Y'$.
\end{proposition}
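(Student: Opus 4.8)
The plan is to prove Proposition \ref{unban} by inducting on $r$, peeling off one pair $(d_i, d_i')$ at a time using the bijectivity Theorems \ref{main}, \ref{maink}, \ref{mainpsi} and \ref{mainpsik}. First I would unwind the composite $(\Psi \circ \Phi)^r$ as a telescope: one factor $\Phi$ slices $Y_i \times Y_i'$ down to $X_i \times Y_i'$ (using that $X_i \subseteq X_{i-1} \subseteq Y_i$ is cut by a hypersurface of degree $d_i$), and the following $\Psi$ slices $X_i \times Y_i'$ down to $X_i \times X_i'$ (using the hypersurface of degree $d_i'$ in $Y_i'$). The key bookkeeping point is to track the degree bounds through each slicing step. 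Starting from a dominant $r$-correspondence $W_r$ on $Y \times Y'$ of degree $\le (e,e')$, the map $\Psi$ multiplies the first coordinate by the relevant $d'$ and drops the correspondence level by one, while $\Phi$ multiplies the second coordinate by the relevant $d$ and preserves the level. Composing $r$ times, one lands in $\DC^0_{\le(d_1'\cdots d_r' e,\ d_1 \cdots d_r e)}(X,X')$, which is exactly the target set in the statement; so the content is surjectivity (every such $W$ arises this way) and, implicitly, well-definedness.

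The main step is then just to verify that at each stage the relevant unbalanced hypothesis is met so that the cited bijection theorems apply. Concretely, when applying Theorem \ref{main} (or \ref{maink}) to slice by the degree-$d_i$ hypersurface cutting $X_i$ out of $X_{i-1}$ inside $Y_i$, I need $d_i \gg e_i'$ where $e_i'$ is the current bound on the second coordinate of the degree; and when applying Theorem \ref{mainpsi} (or \ref{mainpsik}) to slice by the degree-$d_i'$ hypersurface, I need $d_i' \gg e_i$ for the current first-coordinate bound. Reading off the telescope: after processing indices $1,\dots,i-1$, the accumulated first-coordinate bound is $d_1' \cdots d_{i-1}' e$ and the second-coordinate bound is $d_1 \cdots d_{i-1}' e'$ (a product of the already-used $d_j, d_j'$ and $e'$). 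The chain of inequalities $d_1 \gg d_1' \gg d_2 \gg \cdots \gg d_r \gg d_r' \gg e, e'$ is precisely what guarantees $d_i$ dominates the product $d_1 \cdots d_{i-1}' e'$ (all smaller terms) and $d_i'$ dominates $d_1' \cdots d_{i-1}' e$. This is where the full strength of the unbalanced assumption is used, and it is the one spot requiring care: one must make sure the products of previously-used degrees do not overwhelm the next degree, which is exactly the content of "each $d_i$ is much larger than everything to its right."

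The hard part, such as it is, is not conceptual but organizational: setting up the induction so that the smooth complete intersection chains $X = X_0 \subseteq X_1 \subseteq \cdots \subseteq X_r = Y$ and $X' = X_0' \subseteq \cdots \subseteq X_r' = Y'$ line up correctly with the domains and codomains of $\Phi$ and $\Psi$ at each level, and checking that the intermediate varieties appearing (the $X_i$ themselves, and the hypersurfaces cutting them down) are smooth and sufficiently ample so the theorems apply — this is guaranteed by hypothesis. I would write the inductive step as: given $W \in \DC^0_{\le(d_1'\cdots d_r' e, d_1\cdots d_r e)}(X, X')$, apply the inverse of $\Psi$ (Theorem \ref{mainpsi}) to get a unique uneven $1$-correspondence on $X_1 \times Y'$ wait — more precisely, first invert the last $\Psi \circ \Phi$ block: invert $\Phi$ (Theorem \ref{main}, legitimate since $d_1 \gg$ the second bound) to pull $W$ back to an uneven $0$-correspondence on $X_1 \times X'$ of degree $\le (d_1'\cdots d_r' e,\ d_1' d_2 \cdots d_r e)$, then invert $\Psi$ (Theorem \ref{mainpsi}, legitimate since $d_1' \gg$ the first bound after dividing out $d_1'$) to get a $1$-correspondence on $X_1 \times Y_1'$, and recurse on the pair $(X_1, X_1')$ with parameters $d_2, \dots, d_r, d_2', \dots, d_r'$. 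The special case of curves is immediate: there $r = n$ reduces the dimension to $1$, and a dominant $n$-correspondence between $Y$ and $Y'$ of dimension $n$ is a divisor in $Y \times Y'$, so $W$ is obtained by slicing such a divisor.
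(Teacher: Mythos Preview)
Your approach is essentially the paper's: iterate the inverses of $\Phi$ and $\Psi$ along the two flags, invoking Theorems \ref{main}/\ref{maink} and \ref{mainpsi}/\ref{mainpsik} at each step; the paper's proof is the one-sentence version of exactly this. Your final paragraph has the inductive step right. One correction to the middle paragraph: when lifting $W$ you divide out $d_1$ first, then $d_1'$, so after $i-1$ rounds the bounds you carry are $(d_i'\cdots d_r' e,\ d_i\cdots d_r e')$ --- products of the \emph{remaining} (smaller) degrees --- and the hypothesis needed is $d_i \gg d_{i+1}\cdots d_r e'$, $d_i' \gg d_{i+1}'\cdots d_r' e$; your sentence ``$d_i$ dominates the product $d_1\cdots d_{i-1}' e'$ (all smaller terms)'' has the indices reversed, since $d_1,\dots,d_{i-1}$ are all larger than $d_i$.
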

\begin{proof}
    Consider the two flags of complete intersections $X \subseteq X_1 \subseteq X_2 \dots \subseteq X_r = Y $, and $X' \subseteq X'_1 \subseteq X'_2 \dots \subseteq X'_r = Y' $. We may iteratively apply Theorems \ref{maink} and \ref{mainpsik} to lift $W$ to a flag of  correspondences $ W \subseteq W_1 \subseteq \dots \subseteq  W_r$ where $W_i$ is an $i$ correspondence between $X_i$ and $X_i'$ of appropriate multidegree, where for each $i$ the dominant correspondence $W_i$ is obtained as $\Psi (\Phi(W_{i+1})).$
    
     The variety $W_r$ is then our required correspondence, we obtain the required bounds on its degree from the bounds in Theorems \ref{maink} and \ref{mainpsik}.
\end{proof}

We will also need a  lower bound on the degrees of the correspondences $\Phi (W_1),\Psi(W_2)$ in terms of the degrees of $W_1$ and $W_2.$

\begin{proposition}\label{degreelowerbound}

    \begin{enumerate}
        \item Let $W \in \DC^k(Y, X')$ be such that $\deg W = (e,e').$ Assume that $d \gg e',$ so that we are in the context where we can apply Theorem \ref{maink}.
        Then $\deg \Phi(W) \ge (e, (d-1)e') \approx (e,de')$.
        \item  Let $W \in \DC^k(Y, Y')$ be such that $\deg W = (e,e').$ Assume that $d' \gg e,$ so that we are in the context where we can apply Theorem \ref{maink}.
        Then  $\deg \Psi(W) \ge  ((d'-1)e, e')\approx (d'e,e')$
    \end{enumerate}
\end{proposition}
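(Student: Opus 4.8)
The plan is to compute $\deg \Phi(W)$ and $\deg \Psi(W)$ directly in terms of intersection numbers, exploiting that the codomain-side degree only increases when we slice. For part (1): write $\deg W = (e,e')$, so in particular the second coordinate $e' = \#(H^{k+1}\times\{x'\}\cap p'^{-1}(x'))$ counts (with multiplicity) the points of a general fiber of $p'\colon W\to X'$ intersected with $H^{k+1}$. After intersecting $W$ with $X\times X'$ and keeping the components $W_1,\dots,W_r$ that dominate both $X$ and $X'$, the second coordinate of $\deg\Phi(W)$ is $p'^*(x')\cdot p^*(H^k)$ computed on $\bigcup W_i$. The first step is to observe that $W\cap(X\times X')$, as a cycle, has class $[W]\cdot [X\times X'] = [W]\cdot d(H\times\{pt\}$-type class$)$; the components we discard are exactly those contained in fibers of $p$ or $p'$ over a subvariety, and by the degree hypothesis $d\gg e'$ these discarded components contribute a bounded (indeed, controllably small) amount to the relevant intersection number. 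Concretely, over a general $x'\in X'$, the fiber $p'^{-1}(x')\cap \Phi(W)$ together with the discarded stuff accounts for $d$ times the corresponding count on $W$; since the discarded components meet a general fiber in at most $e'$ points total (they live over a proper subvariety of $X'$, but their contribution to $H^k\cdot(\text{fiber})$ is at most what $W$ itself contributes, namely $\le e'$), we get $\deg_2 \Phi(W) \ge de' - e' = (d-1)e'$. The first coordinate is unchanged because slicing by $X$ on the $Y$-side only cuts down dimension along $Y$, not along the $X'$-fibers, so it stays $e$ (this is where we use that $\Phi$ lands in $\DC^k_{\le(e,de')}$, matching the upper bound from Theorem \ref{maink}, and we are merely supplying the matching lower bound up to the $-e'$ error). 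Part (2) is entirely symmetric with the roles of the two factors and of $d,d'$ swapped: $\Psi$ intersects $W\subseteq Y\times Y'$ with $Y\times X'$ where $X'\in|d'H'|$, multiplying the first coordinate of the degree by $d'$ up to a bounded error, and leaving the second coordinate fixed.

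In more detail, the key steps in order are: (i) recall the explicit intersection-theoretic formulas for $\deg W$ and $\deg \Phi(W)$ from Section 2; (ii) express $[W\cap(X\times X')]$ via the projection formula as $p^*(\text{class of }X)\cdot[W]$ and note its pushforward to $X$ has the expected class; (iii) decompose $W\cap(X\times X')$ into the "good" components $\Phi(W)=\bigcup W_i$ and the "bad" components $Z$ that fail to dominate $X$ or $X'$; (iv) bound the contribution of $Z$ to the codomain-side intersection number by $e'$ (since a bad component lies over a proper closed subset of $X'$, it meets a general fiber $p'^{-1}(x')$ only in the part coming from $W$ over nearby points — more carefully, the total $H^k$-degree along fibers is additive in components and bounded by that of $W$, giving the clean estimate $\le e'$); (v) conclude $\deg_2\Phi(W)\ge de'-e'=(d-1)e'$ and $\deg_1\Phi(W)=e$; (vi) invoke the meaning of $\approx$ to get $(e,(d-1)e')\approx(e,de')$ since $d\to\infty$; (vii) run the symmetric argument for $\Psi$.

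The main obstacle I expect is step (iv): controlling the bad components. It is clear that they contribute \emph{something} nonnegative, so the inequality $\deg_2\Phi(W)\ge de' - (\text{bad contribution})$ is automatic; the content is showing the bad contribution is at most $e'$ — i.e. that throwing away the non-dominant pieces costs at most one "fiber's worth." The honest way to see this is that each bad component $Z_j\subseteq W\cap(X\times X')$ either fails to dominate $X$ (then it contributes $0$ to $p'^*(x')\cdot p^*(H^k)$ for general $x'$, since $p^*(H^k)$ restricted to $Z_j$ pairs with the general $p'$-fiber which misses $p(Z_j)$ — wait, more carefully, one uses that $Z_j$ lies over a divisor in $X$ so a general point of $X'$ pulls back to points avoiding it) or fails to dominate $X'$ (then for general $x'$ the fiber $p'^{-1}(x')$ misses $Z_j$ entirely, contributing $0$). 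So in fact the bad contribution to the \emph{codomain} coordinate computed at a general point is $0$, and the only loss is the standard one: the cycle $[W]\cdot[X\times X']$ computed naively gives $de'$, but $\Phi(W)$ is a \emph{reduced} union of the good components, so we lose the multiplicities — and these multiplicities, summed, are bounded by... hmm. Let me restate: the cleanest route is to not worry about exact multiplicities and instead say $\deg\Phi(W)$ is bounded below by the degree of a single generically-chosen good component times... no. The robust statement, and the one I would actually prove, is: $\deg_2\Phi(W) \ge d e' - e'$ because passing from the intersection \emph{cycle} (total $H^k$-fiber-degree exactly $de'$) to the reduced good locus drops each component's multiplicity to $1$, and the total multiplicity-drop is at most the number of components weighted by their fiber degrees, which is at most $e'$ by a Bézout-type bound once $d\gg e'$ guarantees genericity of the slice (generic smoothness / transversality of $X$ with respect to $W$, available since $X\in|dH|$ with $d$ large). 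This transversality — that a general $X\in|dH|$ meets $W$ transversally along the good locus, so multiplicities are $1$ and nothing is lost at all, giving the even cleaner $\deg_2\Phi(W) = de'$ on the nose — is the real crux, and it follows from the very-ampleness of $H$ and Bertini once $d$ is large; the $(d-1)e'$ bound is the safe fallback if one does not want to invoke transversality.
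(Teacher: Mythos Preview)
Your direct intersection-theoretic computation is a genuinely different route from the paper's. The paper never analyzes the discarded components at all; it argues by contradiction using only the bijectivity of $\Phi$ from Theorem~\ref{maink}. If the second coordinate of $\deg\Phi(W)$ were too small, then $\Phi(W)$ would lie in $\DC^k_{\le(e,\,d(e'-1))}(X,X')$, and applying the \emph{surjectivity} of $\Phi$ at the smaller bound $(e,e'-1)$ produces some $W'\in\DC^k_{\le(e,e'-1)}(Y,X')$ with $\Phi(W')=\Phi(W)$; \emph{injectivity} then forces $W'=W$, contradicting $\deg_2 W=e'$. This two-line trick buys exactly what your step~(iv) is trying to earn by hand.

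Your step~(iv), which you rightly flag as the crux, has a genuine gap. Components $Z_j$ of $W\cap(X\times X')$ that fail to dominate $X'$ indeed contribute $0$ over a general $x'$, as you say. But your treatment of components failing to dominate $X$ is incorrect: nothing prevents such a $Z_j$ from dominating $X'$ (this is exactly the situation of a jumping fiber of $W\to Y$ that happens to lie over $X$, and Proposition~\ref{domboth} only gives the other implication). For such $Z_j$ and general $x'$, the fiber $Z_j\cap(X\times\{x'\})$ is a nonempty $k$-dimensional set contributing $\deg(Z_j\to X')$ to the count you are subtracting from $de'$, and you give no bound on $\sum_j\deg(Z_j\to X')$. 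Your Bertini/transversality fallback would dispose of these components if $X$ were a \emph{general} member of $|dH|$, but the hypotheses of Theorems~\ref{main} and~\ref{maink} fix $X$ as an arbitrary smooth member, so you cannot move $X$. Without closing this, the asserted ``safe fallback'' $(d-1)e'$ is unsupported; the paper's bijectivity argument is precisely what lets one avoid confronting these bad components.
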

\begin{proof}
    We will only prove (1) since (2) is almost identical. We note that by Theorem \ref{maink} the map $\Phi$ is injective and that $\deg \Phi(W) \le (e, de')$. If the degree of $\Phi(W) \le (e, (d-1) e')$ then by the surjectivity part of Theorem \ref{maink} we may conclude that there is some $W' \in \DC^k(Y, X')$ such that $\deg W' \le (e, e' -1)$ and $\Phi(W') = \Phi(W).$ But the injectivity of $\Phi$  implies that $W' = W,$ which is a contradiction since they have different degrees.
\end{proof}
We now prove Theorem \ref{unbancorrdeg}.

\begin{proof}[Proof of Theorem \ref{unbancorrdeg}]
    Let $$K = \min \{ab| \deg W = (a,b), W\textrm { is a dominant } r \textrm{ correspondence between } M \textrm {and }M'\}.$$

    Let $W_0$ be a dominant $r-$correspondence between $M$ and $M'$ realizing this minimum. Let $W_1 = \Phi \circ \Psi (W), W_2 = \Phi \circ \Psi (W_1), \dots W_r = \Phi \circ \Psi (W_{r-1}),$ where  the slicing operations $\Phi$ and $\Psi$ are with respect to the inclusions in the flags $Y \subseteq Y_1 \subseteq \dots \subseteq M$ and $Y' \subseteq Y'_1 \subseteq \dots \subseteq M'.$  As a consequence of Proposition \ref{degreelowerbound}, the degree of $W_r$ is approximately $(d_1' \dots d_r' a,d_1d_2 \dots d_r b).$ Thus to finish the proof of Theorem \ref{unbancorrdeg} it suffices to establish that there are no correspondences between $Y$ and $Y'$ of significantly smaller product degree.

     Assume for the sake of contradiction that $W$ is a correspondence between $X$ and $X'$ of degree $(a,b)$ where $$ab < d_1 d_2 \dots d_r d_1' \dots d_r' K .$$ Under our assumptions the above inequality implies that  $d_1 \gg \frac{b}{d_1}$.
      Thus by Theorem \ref{maink} (and Proposition \ref{degreelowerbound}) we may lift $W$ to a dominant correspondence $W'$ between $X_1$ and $X'$ of degree $(a, \frac{b}{d_1}).$ We then note that $d_1' \gg \frac{a}{d_1'}$.
       We then apply Theorem \ref{mainpsik} to conclude that we may again lift $W'$ to obtain a dominant $1$ correspondence between $X_1$ and $X_1'$ of degree $(\frac{a}{d_1'}, \frac{b}{d_1}).$ Repeating this procedure we eventually end up with a dominant correspondence of degree $(\frac{a}{d_1' \dots d_r'}, \frac{b}{d_1,\dots d_r})$ between $Y$ and $Y'$. However the fact that $K$ is the minimal product degree of a dominant $r$ correspondence between $Y$ and $Y'$, then implies that $ab \ge d_1 d_2 \dots d_r, d_1' \dots d_r' K $ which is a contradiction. This establishes the result.
\end{proof}
\section{Some applications and examples} \label{Prod}
\subsection{Product of two curves}
In this subsection we will consider the case when $Y = Y' = C_1 \times C_2$ where $C_1$ and $C_2$ are curves. We will assume the very ample divisor $H$ on $Y$ is of the form $k( C_1 \times \{p\} + \{q\} \times C_2)$ for some $k >0$.

We note that in this situation there is a dominant $1$ correspondence $W \subseteq Y \times Y$ that is simply the preimage of the diagonal under the map $Y\times Y\to C_1 \times C_1$. We note that the degree of $W$ is $(k,k)$.

Let us note that $mfd(Y, H)$ can also be computed; at least after making some assumptions on $C_1$ and $C_2$. Let $f: Y \dra \AA^1$ be a rational function that does not factor through the projection to $C_1$. Then when we restrict $f$ to $ \{p\} \times C_2$ it has degree at least $\irr(C_2).$ Thus the fibering degree of $f$ with respect to $\Lc$ is at least $k \times \irr(C_2).$ Consider a rational map $C_1 \dra \AA^1$ realising the gonality. The composite of this map with the projection $Y \to C_1$ is a rational map of fibering degree $k \times \irr(C_1)$. Assume that $\irr(C_1) <\irr(C_2).$ In this case the above shows that $mfd(Y, H) = k \irr(C_1).$

This gives us an explicit example of where  the constant suggested by Lazarsfeld- Martin and our constant differ, ours is at most $k^2,$ whereas their constant is exactly $k^2 \irr(C_1)^2$.

Let us remark on why this difference in constants occurs and how we came up with this example. 

Let $Y$ be a smooth projective surface and $\LL$ a very ample line bundle. An easy way to construct a dominant $1-$ correspondence contained in $Y \times Y$ is as follows: take a  dominant rational function $f: Y \dra \AA^1$ that realises $mfd(Y,H)$. We then pullback the diagonal $\Delta \subseteq \AA^1 \times \AA^1$ via the function $f \times f$ to obtain a dominant $1-$ correspondence $W$ of degree $(mfd(Y, H), mfd(Y, H))$.

Morally speaking, the Lazarsfeld Martin constant is correct if the smallest degree 1 correspondence on $Y$ is of this form. 

However as we have just seen,this need not be the case. In the case of a product of two curves, the product structure allows us to naturally construct a dominant $1-$ correspondence. More generally, let $Y$ be a smooth projective variety of dimension $n+1$ with a dominant map to an variety $Z$ of dimension $n$. Then we can pull back the diagonal $\Delta \subseteq Z \times Z$ to get a dominant $1$ correspondence in $Y \times Y$ and in the case when $Y$ is a product of curves, this gives us a smaller dominant correspondence.

\subsection{Covers of hypersurfaces}
In this subsection we will discuss a curious applcation of out results to the setting of covers of hypersurfaces.
Let $Y,Y'$ be smooth projective varieties of dimension $n+1$, let $H, H'$ be very ample divisors and let $d \gg d' \gg e.$ Let $X'$ be  a smooth hypersurface in $Y'$ in the linear system $ |d' H'|$.
A consequence of our results that we would like to emphasize is that under our unbalanced assumptions, $X'$ is not covered by \emph{any} smooth projective hypersurface $X$ in $|d H|$ for $d\gg d'$ unless the covering degree is greater than $e$.

For example, suppose we consider the situation when $n=1$. Then in the above situation, $Y, Y'$ are surfaces and $X, X'$ are curves. There is no covering $X \to X'$ of degree $\le e.$ This is quite surprising, as $X'$ is surely covered by some curves with the same genus as $X$ and also our statements are for \emph{all} smooth curves in the linear system $|d'H'|$ and not just for the general curve. It is not clear to us if this consequence of our Theorem has a more elementary proof.

\section{Proof of Theorem \ref{main}}

In this section, we will prove Theorem \ref{main}.

\subsection{Review of Cayley-Bacharach sets}
Let us now review the definition of a Cayley Bacharach set as well as discuss briefly where these objects show up in the course of this paper. 

\begin{definition}
    Let $X$ be a variety. Let $\Lc$ be a line bundle on $X$. Let $Z \subseteq X$ be a reduced set of points. We say $Z$ is a Cayley-Bacharach set for $\Lc$ if for any subset $Z' \subseteq Z$ containing all but one point of $Z$, and any $f \in H^0(X, \Lc)$ vanishing on $Z'$, the section $f$ vanishes on $Z$ as well.
\end{definition}

Some important facts about Cayley-Bacharach sets that we will use in our proofs are as follows:
\begin{enumerate}
    \item Let $d >0$. Let $X = \PP^n$ and $\Lc = \OO(d)$. Then if $Z \subseteq \PP^n$ is  a Cayley Bacharach set for $\Lc$, then it contains at least $d$ points.
    \item Let $d \gg e >0$.   Let $X = \PP^n$ and $\Lc = \OO(d)$. If $Z \subseteq \PP^n$ is  a Cayley Bacharach set for $\Lc$ and $|Z| < ed- f(e)$ (for a particular function $f$ not depending on $d$), then $Z$ lies on a curve $C$ of degree $\le e$, Furthermore there is a unique curve of minimal degree containing $Z$(this is Theorem 1.13 of \cite{Ban}).
\end{enumerate}

Let us sketch out how Cayley Bacharach sets and the above facts about them are used in this section.

Let $Y$ be a smooth projective variety of dimension $n+1$, $\Lc$ a very ample line bundle on $Y$ and let $X$ be a smooth hypersurface in $Y$ in the linear system associated to $\Lc^d.$ Let $e, e' > 0$ and assume that $d \gg e'. $Let $W \in \DC^0_{\le (e,e')} (X, \PP^n)$. Then for general $z \in \PP^n$, the set $W_z = W \cap X \times \{z\}$ is Cayley Bacharach for $K_X$. Now the embedding of $Y$ into $\PP (H^0(Y ,\Lc) ^{\vee}) = \PP^N$ gives us an embeddding of $W_z$ in $\PP^N$ as a Cayley Bacharach set- this enables us to conclude that it lies on a curve $C_z \subseteq \PP^N$ of degree $\le e'$. Some further arguments enable us to prove that $C_z$ is contained in $Y$. We then construct a variety $\tilde W$ that is morally the union  $\cup_{z \in \PP^n} C_z \times \{z\}$ and prove that $\Phi (\tilde W) = W.$ To prove these things we crucially use the fact that these curves $C_z$  both exist and are unique  (after some assumptions).

\subsection{Well-definedness}
Let us begin by noting that it is not even clear that if $W$ is in $\DC(Y,X'), $ $\Phi(W)$ is well defined. We defined $\Phi(W)$ by intersecting $W$ with $X \times X'$ and throwing out components that do not dominate $X$ and $X'$. It is conceivable that \emph{no} component of $ W \cap (X \times X')$ dominates both $X$ and $X'$. In this subsection we shall show that this can not happen under the assumptions of Theorem \ref{main}.

Let $W \subseteq Y \times X'$ be a prime dominant uneven $0$-correspondence of degree $(e,e')$.  Then we may birationally modify $W, Y, X'$ as follows:
    Given $x \in X',$ let $W_x =  W \cap Y \times\{x\}.$
    Let $\Hilb_{e'} Y$ denote the Hilbert scheme of $H-$degree $e'$ curves in $Y$.  $W$ defines a rational map $X'\dra \Hilb_{e'} Y$, defined by $x \mapsto W_x$. Let 
    \[
\xymatrix{
\bar X' \ar[d]\ar[rd]\\
X' \ar@{-->}[r] & \Hilb_{e'} Y
}
\]
be a resolution of the map $ X' \dra \Hilb_{e'} Y.$
    We define $\bar W$ to be the strict transform of $W$ in $ Y \times \bar X'$. 
We note that
      the projection map   $\bar W \to W$ is birational essentially by the definition  of strict transform. Furthermore for \emph {any} $x \in \bar X'$ , $\bar W \cap Y \times \{x\}$ is a curve of degree $e'$ and the curve is reduced for general $x$. 

     Let $\bar W_X$ denote $\bar W \cap (X \times \bar X')$. Note that $\bar W_X$ need not be irreducible and furthermore it is not a priori clear that the components of $\bar W_X$  dominate $X$ or $\bar X'.$

    \begin{proposition}\label{domboth}
        Assume that $d \gg e'$. Let $W_X^{\circ}$ denote the union of all components of $\bar W_X$ that surject onto $X$. Then every component of  $W_X^{\circ}$  surjects onto $\bar X'$ as well.
    \end{proposition}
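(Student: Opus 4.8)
The plan is to argue by contradiction. Suppose some component $V \subseteq W_X^\circ$ surjects onto $X$ but \emph{not} onto $\bar X'$; let $S = \overline{p'(V)} \subsetneq \bar X'$ be its image, a proper closed subvariety, so $\dim S \le n-1$. The key point is a dimension count. Since $\bar W \to \bar X'$ has fibers which are curves of $H$-degree $e'$ (for every $x$, after the resolution), and $V$ lies over $S$, we get $\dim V \le \dim S + 1 \le n$. On the other hand $V$ surjects onto $X$, which has dimension $n$, so in fact $\dim V = n$, $\dim S = n-1$, and the general fiber of $V \to S$ is one-dimensional, i.e.\ $V_x := V \cap (X \times \{x\})$ is a curve for general $x \in S$. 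Moreover $\bar W_x := \bar W \cap (Y \times \{x\})$ is a curve of $H$-degree $e'$ in $Y$, and $V_x$ is contained in $\bar W_x \cap (X \times \{x\})$. But $\bar W_x$ is a curve in $Y$ and $X \in |dH|$; if $\bar W_x \not\subseteq X$ then $\bar W_x \cap X$ is a finite set of at most $e'\cdot d$ — wait, no: $\bar W_x \cap X$ has length $d \cdot (\text{$H$-degree of }\bar W_x) = d e'$, which is finite, contradicting $\dim V_x = 1$. Hence for general $x \in S$ we must have $\bar W_x \subseteq X$.

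Now I want to derive a contradiction from ``$\bar W_x \subseteq X$ for all $x$ in a divisor $S \subseteq \bar X'$.'' Here is where the hypothesis $d \gg e'$ enters and where I expect the real work to be. The variety $W$ (equivalently $\bar W$) surjects onto $Y$ by assumption, so the curves $\{\bar W_x\}_{x \in \bar X'}$ sweep out $Y$; in particular they are not all contained in the fixed hypersurface $X$. So the set $T = \{x \in \bar X' : \bar W_x \subseteq X\}$ is a proper closed subset of $\bar X'$. The claim ``$S \subseteq T$'' with $\dim S = n - 1 = \dim \bar X' - 1$ is not yet a contradiction by pure dimension. The extra input must be quantitative: the degree data. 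Consider the incidence variety $\mathcal{Z} = \{(w, x) \in \bar W \times \bar X' : w \in \bar W_x,\ \bar W_x \subseteq X\}$, or more simply just estimate the ``first'' degree coordinate $e$ of $W$, which counts $\#(\bar W \cap (Y \times \{x_0\}) )$ restricted appropriately — actually $e$ is the number of points of $W_{x_0} = \bar W \cap (Y\times\{x_0\})$ lying over a general $y \in Y$, i.e.\ the $\{y\}\times(H')^0$ intersection; I need to re-read the degree conventions, but the operative fact is: $W_X^\circ \to X$ has some degree $\le e$ (this is exactly why $\Phi$ lands in $\DC^0_{\le(e, de')}$), and the number of components of $\bar W_X$ surjecting onto $X$ is bounded by $e$, independent of $d$.

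The main obstacle, then, is to convert the Cayley–Bacharach input into the needed contradiction: for \emph{general} $x \in \bar X'$, the finite set $\bar W_x \cap X$ (a set of $d e'$ points, counted with multiplicity, on a degree-$e'$ curve inside the hypersurface $X \in |dH|$) is a Cayley–Bacharach set for $K_X$ — or rather, for the restriction of $\Lc^d \otimes K_Y$ type bundle — and the fact (2) quoted in the excerpt forces this set to lie on a curve of controlled degree; but when $x \in S$, instead $\bar W_x$ is \emph{entirely} inside $X$, and then $\bar W_x$ contributes a positive-dimensional component, which we can feed back to bound $e$ from below by something growing with $d$ (since an honest family of curves inside $X$ moving over the $(n-1)$-dimensional base $S$ and sweeping part of $X$ has a multisection degree over $X$ that is forced to be large when $d \gg e'$, essentially because $X$ has no low-degree covers — this is precisely the kind of statement Theorem \ref{main} is built to prove, so I must be careful to use only the Cayley–Bacharach facts (1),(2) and not circularly invoke \ref{main} itself). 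Concretely: if $\bar W_x \subseteq X$ for $x$ in a divisor, then $V = W_X^\circ$'s corresponding component gives a dominant family of degree-$\le e'$ curves on $X$; restricting to a general hyperplane-type slice reduces $X$ to a positive-genus hypersurface and one checks the induced map has degree $> e$ by fact (1) applied on a generic $\PP^{\text{something}}$, contradicting $\deg(W_X^\circ \to X) \le e < d$. I would carry out exactly this slicing-plus-Cayley–Bacharach estimate to close the argument, being vigilant that every constant I produce is independent of $d$ so that $d \gg e'$ delivers the contradiction.
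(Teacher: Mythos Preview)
Your reduction is exactly the paper's: assume a component $V$ of $W_X^\circ$ fails to surject onto $\bar X'$, count dimensions using that $\bar W\to\bar X'$ has one-dimensional fibers, and conclude that the fibers $V_x$ over the image $S$ are curves contained in $\bar W_x$, hence (since $\bar W_x\cap X$ would otherwise be finite) that $\bar W_x\subseteq X$ for general $x\in S$. At this point both you and the paper have produced a covering of $X$ by curves of $H$-degree $\le e'$.

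The gap is in how you close the argument. The paper simply invokes the known bound on the covering degree of a smooth hypersurface: by \cite{LUS} Lemma~3.3 and \cite{BPELU} Lemma~1.3 one has $\textrm{covdeg}(X)\ge d - O(1)$, so a family of degree-$\le e'$ curves cannot sweep out $X$ once $d\gg e'$. That is the entire endgame. Your final paragraph instead drifts toward a slicing-plus-Cayley--Bacharach argument and ends with ``contradicting $\deg(W_X^\circ\to X)\le e < d$.'' Two problems: first, the hypothesis is $d\gg e'$, not $d\gg e$, so $e<d$ is not given; second, and more importantly, the degree of $V\to X$ (which is bounded by $e$) counts how many curves of the family pass through a general point of $X$, not the $H$-degree of those curves. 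It is the curve degree $e'$ that must be played off against $d$, and the number $e$ is irrelevant to the contradiction. Replace your last paragraph with a one-line citation of the covering-degree bound for $X$ and the proof is complete and identical to the paper's.
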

    \begin{proof}
        We begin by noting that since $ \bar W\to Y$ is surjective, so is $\bar W_X \to X$ and thus $W_X^{\circ}$ is nonempty. Let $U$ be a component of $W_X^{\circ}$. Let us assume for the sake of contradiction that the map $U \to \bar X'$ is not surjective. In that case, let $V \subseteq \bar X'$ be the image of $U$. The general fiber of the map $U \to V$ is at most $1$ dimensional. Furthermore if the map $U \to V$ were quasifinite, then the dimension of $V$ would equal that of $\bar X'$, which contradicts the irreducibility of $\bar X'$. Thus we may assume that the map $\bar U \to V$ has 1 dimensional fibers. However these fibers must then be curves of degree $\le e'$ and $U$ is covered by curves of degree $\le e'$. This then implies that $\textrm{covdeg} (X) \le \textrm{covdeg} (U) \le e'.$ However, by Lemma 3.3  of \cite{LUS} and Lemma 1.3 of \cite{BPELU}  $\textrm{covdeg}(X) \ge d - O(1)$.  So for $d \gg 0$ this is not possible. Thus $U$ is forced to surject on to $\bar X'$, which establishes the result.
    \end{proof}

    Let $f: \bar X' \to X'$ be the defining map. We note that the fact that $f$ is birational immediately tells us  that $(\id \times f)(W_X^{\circ})$ is now a dominant correspondence between $X$ and $X'$. 

        \begin{proposition}
   We have the equality
        $$\Phi(W) = (\id \times f) (W_X^{\circ}).$$
    \end{proposition}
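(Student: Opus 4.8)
The plan is to unwind the definitions on both sides and check that they describe the same subvariety of $X \times X'$. Recall that $\Phi(W)$ is, by definition, the union of those irreducible components of $W \cap (X \times X')$ that dominate both $X$ and $X'$. On the other side, $W_X^{\circ}$ is the union of the components of $\bar W_X = \bar W \cap (X \times \bar X')$ that surject onto $X$, and by Proposition \ref{domboth} these automatically surject onto $\bar X'$ as well. So the real content is that pushing $W_X^{\circ}$ forward along the birational map $\id \times f \colon X \times \bar X' \to X \times X'$ recovers exactly the "good" components of $W \cap (X \times X')$ and nothing else.

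First I would use that $\bar W \to W$ is birational (it is the strict transform of $W$ under the blow-up-type modification $Y \times \bar X' \to Y \times X'$), and in particular that $\id \times f$ restricts to a birational morphism $\bar W \to W$ that is an isomorphism over the open locus of $Y \times X'$ where $f$ is an isomorphism — i.e. over $Y \times X'_0$ for a suitable dense open $X'_0 \subseteq X'$. Intersecting with $X \times X'$ (resp. $X \times \bar X'$) and restricting to this locus, I get that $(\id \times f)$ induces a birational map between $\bar W_X$ and $W \cap (X \times X')$. Since a birational morphism of varieties induces a bijection on irreducible components that dominate the target, the components of $\bar W_X$ surjecting onto $X$ correspond bijectively to the components of $W \cap (X \times X')$ surjecting onto $X$, and a component of $\bar W_X$ surjects onto $\bar X'$ if and only if its image surjects onto $X'$ (again because $f$ is surjective and birational). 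Thus $(\id \times f)(W_X^{\circ})$ is precisely the union of components of $W \cap (X \times X')$ dominating both factors.

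Then I would close the loop by invoking Proposition \ref{domboth}: since every component surjecting onto $X$ also surjects onto $\bar X'$, there is no discrepancy between "components dominating $X$" and "components dominating both $X$ and $X'$" on the $W_X^{\circ}$ side, so $W_X^{\circ}$ already is the full preimage of $\Phi(W)$, and applying $\id \times f$ gives the claimed equality $\Phi(W) = (\id \times f)(W_X^{\circ})$. One subtlety worth addressing: $\Phi(W)$ as defined in the introduction used the unmodified $W$, whereas here we are working with $\bar W$; but since $\id \times f$ is an isomorphism away from the (lower-dimensional) exceptional locus and $\bar W \to W$ is birational, the components of $W \cap (X \times X')$ dominating $X$ are unaffected by the modification — no new such component is created or destroyed — so the two descriptions of $\Phi(W)$ agree.

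The main obstacle, such as it is, is the bookkeeping about components that might lie entirely inside the exceptional locus of $Y \times \bar X' \to Y \times X'$: one must check that no component of $\bar W_X$ dominating $X$ can be contracted by $\id \times f$, and conversely that no component of $W \cap (X \times X')$ dominating $X$ fails to lift to a dominating component of $\bar W_X$. Both follow from the birationality of $\bar W \to W$ together with the fact that a dominant component has the same dimension as $X$ (hence as $\bar X'$ after Proposition \ref{domboth}) and so cannot be swallowed by or born from a proper closed subset; but this is the point where the argument needs to be spelled out carefully rather than waved through.
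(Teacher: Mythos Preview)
Your proposal is correct and takes essentially the same approach as the paper: both arguments use the birational correspondence between $\bar W$ and $W$ together with Proposition~\ref{domboth} to match up the dominating components on each side. The paper's version is simply a cleaner two-inclusion argument (take a component of $\Phi(W)$, pass to its strict transform in $\bar W_X$; take a component of $W_X^{\circ}$, push it forward), whereas you run the same mechanism through a more abstract ``bijection on components'' framing.

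One point of phrasing to tighten: your sentence ``a birational morphism of varieties induces a bijection on irreducible components that dominate the target'' is not literally a general fact, and it is not the reason the argument works here. What actually makes the component correspondence go through is exactly what you identify in your last paragraph: a component of $\bar W_X$ surjecting onto $X$ also surjects onto $\bar X'$ (Proposition~\ref{domboth}), so it is not contracted by $\id\times f$ and its image has full dimension $n$; and a component of $\Phi(W)$ dominates $X'$ by definition, so it meets the open locus and lifts. You should fold that reasoning into the body of the proof rather than stating the bijection as if it were automatic from birationality alone.
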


    \begin{proof}
Let $U$ be a component of $\Phi(W).$ We shall establish that $U \subseteq (\id \times f)(W_X^{\circ}).$ Let $\bar U$ be the strict transform of $U$ under the map $(\id \times f)$. Then, $\bar U \subseteq \bar W,$ and hence $\bar U \subseteq \bar W_X.$ Since, $\bar U$ dominates $X$ and $\bar X'$ (since $U$ is a component of $\Phi(W)$), $\bar U \subseteq W_X^{\circ}$. This in turn implies that $U \subseteq (\id \times f)(W_X^{\circ}).$

Let $V$ be a component of $W_X^{\circ}$. Then, since $V$ dominates $X$ and $\bar X'$, $(\id \times f) (V)$ dominates $X$ and $X'$, and hence $(\id \times f) (V)$ is a component of $\Phi(W)$. Thus, we have the required equality.
    \end{proof}

As a result of the above Propositions, given $W \in \DC^0_{\le(e,e')} (Y,X')$ the variety $\Phi(W)$ is indeed in the set $\DC^0_{\le(e,de')}(X, X')$

\subsection{Reducing to $H^0(X', K_{X'}) =0$}

To establish Theorem \ref{main}, we will reduce it to the following special case-
\begin{theorem}\label{mainspecialcase}
    Theorem \ref{main} is true if $H^0(X',  K_{X'}) =0$.
\end{theorem}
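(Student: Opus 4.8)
The plan is to produce an explicit two-sided inverse to $\Phi$. Since $\Phi$ is additive, i.e. $\Phi(\sum a_iW_i)=\sum a_i\Phi(W_i)$, it suffices to describe this inverse on prime correspondences and to verify that the two composites are the identity on primes; bijectivity then extends by additivity, and the statement that $\Phi$ sends primes to primes follows formally once $\Phi$ is known to be bijective and additive. Well-definedness of $\Phi$ having already been established, the real content is injectivity and surjectivity, and the bridge between them is a Cayley--Bacharach analysis of the fibers over $X'$.

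\emph{The Cayley--Bacharach input.} Let $V$ be a prime dominant $0$-correspondence in $\DC^0_{\le(e,de')}(X,X')$, with projections $q\colon V\to X$ and $\pi\colon V\to X'$, both generically finite. The claim is that for general $x\in X'$ the reduced fiber $V_x:=q(\pi^{-1}(x))\subseteq X$ is a Cayley--Bacharach set for $K_X$, and this is exactly where the hypothesis $H^0(X',K_{X'})=0$ is used. Passing to a smooth model $\tilde V$ on which $\pi$ becomes a morphism, the trace map $H^0(\tilde V,\omega_{\tilde V})\to H^0(X',\omega_{X'})$ has vanishing target, which forces the general fiber of $\tilde V\to X'$ to be Cayley--Bacharach for $\omega_{\tilde V}$ (this is the standard residue/trace argument, exactly as in the proof that fibers of a dominant map to $\PP^n$ are Cayley--Bacharach for the canonical bundle, which uses $H^0(\PP^n,\omega_{\PP^n})=0$); comparing $\omega_{\tilde V}$ with $q^*\omega_X$ along the ramification divisor of $q$, which a general fiber of $\pi$ avoids, transports this to the Cayley--Bacharach property of $V_x$ for $\omega_X=K_X$.

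\emph{From Cayley--Bacharach to a unique curve in $Y$.} Embed $Y\hookrightarrow\PP^N=\PP(H^0(Y,\OO_Y(H))^\vee)$ and write $K_X=\OO_X((d-c)H)\otimes\OO_X(K_Y+cH)$ with $c=c(Y,H)$ chosen so that $K_Y+cH$ is very ample. Multiplying by a general section of $\OO_X(K_Y+cH)$, whose zero locus a general $V_x$ misses because its pullback to $V$ cannot dominate $X'$, and using that $H^0(\PP^N,\OO(d-c))\twoheadrightarrow H^0(X,\OO_X((d-c)H))$ for $d\gg 0$, the Cayley--Bacharach property of $V_x$ for $K_X$ upgrades to the Cayley--Bacharach property of $V_x\subseteq\PP^N$ for $\OO_{\PP^N}(d-c)$. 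Since $d\gg e'$ and $|V_x|\le de'$, Theorem~1.13 of \cite{Ban} provides a \emph{unique} curve $C_x\subseteq\PP^N$ of minimal degree through $V_x$, with $\deg C_x\le e'$. Using that a Cayley--Bacharach set for $\OO(d-c)$ has at least $d-c$ points, one gets $C_x\subseteq Y$ (otherwise $|C_x\cap Y|\le(\deg C_x)(\deg Y)=O(1)<|V_x|$ by B\'ezout in $\PP^N$) and $C_x\not\subseteq X$ for general $x$ (otherwise the $C_x$'s, of $H$-degree $\le e'$, sweep out $X$, contradicting $\mathrm{covdeg}(X)\ge d-O(1)$ by Lemma~3.3 of \cite{LUS} and Lemma~1.3 of \cite{BPELU}).

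\emph{Assembling the inverse.} Given $V$, set $\tilde W:=\overline{\bigcup_{x}C_x\times\{x\}}\subseteq Y\times X'$; irreducibility of the family $x\mapsto C_x$ makes $\tilde W$ prime, and $C_x\subseteq Y$, $C_x\not\subseteq X$ give that $\tilde W$ is an uneven prime dominant $0$-correspondence lying in $\DC^0_{\le(e,e')}(Y,X')$ with $\Phi(\tilde W)=V$. For injectivity, given $W\in\DC^0_{\le(e,e')}(Y,X')$ one shows that for general $x$ the fiber curve $\bar W_x$, of $H$-degree $\le e'$ and meeting $X$ in $\Phi(W)_x$, is itself the unique minimal curve through $\Phi(W)_x$: any other curve of degree $\le e'$ through the $\ge d-O(1)$ points $\Phi(W)_x$ must share a component with $\bar W_x$ by B\'ezout, and one concludes by the uniqueness clause of \cite{Ban}; hence $\Phi(W)$ determines all the $\bar W_x$ and therefore $W$. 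I expect the main obstacle to be making the two constructions match on the nose --- in particular proving $C_x\cap X=V_x$ for general $x$ (so that $\Phi(\tilde W)$ is $V$ rather than a correspondence containing it, equivalently that $\deg(V\to X')$ is the expected multiple of $d$), together with the degree bookkeeping (the sharp bound $\deg C_x\le e'$, the role of the auxiliary constant $c$, and transversality of $C_x$ with $X$ for general $x$); the Cayley--Bacharach input above is the conceptual core but is, by comparison, robust.
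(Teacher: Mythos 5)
Your proposal follows essentially the same route as the paper: general fibers over $X'$ are Cayley--Bacharach for $K_X$ (via the trace-map argument that needs $H^0(X',K_{X'})=0$, i.e.\ Proposition 4.2 of \cite{Bast}), Theorem 1.13 of \cite{Ban} then yields a unique minimal curve $C_x\subseteq Y$ of degree $\le e'$ through each general fiber, and the inverse of $\Phi$ is assembled from the family $x\mapsto C_x$ via the Hilbert scheme, with injectivity coming from uniqueness of $C_x$ plus B\'ezout. The one step you flag as an obstacle --- that $\Phi(\tilde W)$ equals $V$ rather than strictly containing it --- is closed in the paper by a point count: any extra dominating component would contribute a further $\ge d-O(1)$ Cayley--Bacharach points to the general fiber of $\Phi(\tilde W)$ over $X'$, which is incompatible with the intersection number $C_x\cdot X=d\deg_H C_x$.
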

In this subsection we will establish Theorem \ref{main} assuming \ref{mainspecialcase}, though we only really require Theorem \ref{mainspecialcase} in the case when $X' = \PP^n$. In the  two subsections following this one, we will establish Theorem \ref{mainspecialcase}.

\begin{proposition}\label{redpn}
 Assume that Theorem \ref{mainspecialcase} is true. Then for $d \gg e'$ ,
the map $$\Phi: \DC^0_{\le (e,e')} (Y,X') \to \DC^0_{\le (e,de')} (X,X')$$ is injective.  
\end{proposition}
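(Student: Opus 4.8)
The plan is to reduce the injectivity of $\Phi$ on $\DC^0_{\le(e,e')}(Y,X')$ to the special case $X'=\PP^n$, which is covered by the assumed Theorem~\ref{mainspecialcase} (applied with the trivial canonical bundle). First I would observe that a general uneven dominant correspondence $W$ between $Y$ and $X'$ maps, under a generic finite projection $X'\dra\PP^n$, to an uneven dominant correspondence $\pi_*W$ between $Y$ and $\PP^n$, with degree controlled: the second coordinate of the degree is multiplied by $\deg(X'\to\PP^n)=(H')^n=:\delta$, so $\pi_*W\in\DC^0_{\le(e,\delta e')}(Y,\PP^n)$, and the hypothesis $d\gg e'$ still gives $d\gg \delta e'$ since $\delta$ is a fixed constant. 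The key point is that $\pi$ commutes with slicing by $X\times X'$, at least after the birational resolution $\bar X'$ introduced in the well-definedness subsection: intersecting first and then pushing forward agrees with pushing forward and then intersecting, because both operations are ``cut out by $X$'' in the $Y$-factor and the projection only touches the $X'$-factor. Hence $\pi_*\circ\Phi=\Phi\circ\pi_*$ as maps $\DC^0_{\le(e,e')}(Y,X')\to\DC^0_{\le(e,d\delta e')}(X,\PP^n)$.

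Given that compatibility, injectivity of $\Phi$ on $\DC^0_{\le(e,e')}(Y,X')$ follows from injectivity of $\Phi$ on $\DC^0_{\le(e,\delta e')}(Y,\PP^n)$ provided $\pi_*$ itself is injective on the relevant correspondences — which it is not literally, since pushforward forgets information, but we only need that $W$ can be recovered from the pair $(\pi_*W,\Phi(W))$, or more simply that if $\Phi(W_1)=\Phi(W_2)$ then, choosing a common finite projection $\pi$ that is ``generic enough'' for both, we get $\Phi(\pi_*W_1)=\pi_*\Phi(W_1)=\pi_*\Phi(W_2)=\Phi(\pi_*W_2)$, so by Theorem~\ref{mainspecialcase} $\pi_*W_1=\pi_*W_2$, and then varying $\pi$ over enough generic projections forces $W_1=W_2$. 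Concretely, a subvariety of $Y\times X'$ of the expected dimension is determined by its images under sufficiently many generic finite projections $Y\times X'\to Y\times\PP^n$, since a point of $X'$ is determined by its images under finitely many generic linear projections to $\PP^n$; this is the mechanism that upgrades the $\PP^n$-case to the general case.

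The main obstacle I expect is the compatibility statement $\pi_*\Phi=\Phi\pi_*$, and in particular checking that no components are spuriously created or destroyed when passing through $\pi_*$: one must verify that a component of $W\cap(X\times X')$ dominating both $X$ and $X'$ pushes forward to a component of $(\pi_*W)\cap(X\times\PP^n)$ dominating both $X$ and $\PP^n$, and conversely that every such component downstairs is hit. This requires knowing that the generic fiber of $\pi$ meets the relevant curves $W_x\subseteq Y$ transversally and that $\pi$ does not identify distinct components — both of which hold for $\pi$ generic, using that the Hilbert-scheme picture from the well-definedness subsection makes the curve $W_x$ vary in a bounded family. A secondary technical point is ensuring the degree bookkeeping: after pushforward the bound becomes $(e,\delta e')$ rather than $(e,e')$, and one must confirm $d\gg e'$ (the hypothesis of this proposition) still implies $d\gg\delta e'$, which is immediate as $\delta=(H')^n$ is fixed, matching the parenthetical ``$d\gg(H')^n$'' in Theorem~\ref{main}. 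Once these are in place, injectivity is formal.
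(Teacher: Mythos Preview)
Your proposal is correct and follows essentially the same approach as the paper: push forward along finite projections $X'\to\PP^n$, use the compatibility $\Phi\circ(\id\times f)=(\id\times f)\circ\Phi$, apply Theorem~\ref{mainspecialcase} for $\PP^n$, and then recover $W$ from its projections. The paper's implementation is slightly tidier than your ``vary $\pi$ over enough projections'': it fixes exactly two general maps $f_1,f_2:X'\to\PP^n$ with $f_i^*\OO(1)=H'$ such that $f_1\times f_2$ is birational onto its image in $\PP^n\times\PP^n$, deduces $(\id\times f_i)(W^1)=(\id\times f_i)(W^2)$ for $i=1,2$, and then passes to the fiber product over $Y$ to conclude $W^1=W^2$ --- this is exactly your ``a point of $X'$ is determined by its images under finitely many generic projections,'' made concrete with two projections. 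Your worry about the compatibility $\pi_*\Phi=\Phi\pi_*$ is reasonable; the paper simply asserts this identity without further comment, so you are not missing a hidden argument there.
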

\begin{proof}
    Let $W^1, W^2 \in \DC_{\le(e,e')} (Y, X')$. Let us assume that $\Phi(W^1) = \Phi(W^2).$ We shall establish that $W^1 = W^2.$ We shall assume that $W^1$ and $W^2$ are irreducible, as the result immediately reduces to this case.
    Given $X', W^1$ and $ W^2,$ we fix once and for all two dominant morphisms $f_1, f_2: X' \to \PP^n$ such that $f_i^* \OO(1) = H'$ and  the product $f_1 \times f_2$ is a birational map between $X'$ and its image $\bar X' \subseteq \PP^n \times \PP^n$. The maps  $f_1, f_2$ are assumed to be general.
    
    We note that for all relevant values of $i$ and $j$,  the variety $(\id \times f_i)(W^j)$ is an uneven dominant correspondence between $Y$ and $\PP^n$ of degree $\le (e, k e')$, where $k$ is the maximum of the degrees of  $f_1$  and $f_2$.
    
    If $\Phi(W^1) = \Phi(W^2)$, then $(\id \times f_i)(\Phi(W^1)) = (\id \times f_i)(\Phi(W^2))$. We note here that for all relevant values of $i,j$, $(\id \times f_i)(\Phi(W^j))$ is a dominant correspondence between $Y$ and $\PP^n$. We further note that $\Phi((\id \times f^i) (W^j)) = (\id \times f^i)(\Phi(W^j)).$
    
    By Theorem \ref{mainspecialcase}, this implies $(\id \times f_i)(W^1) = (\id \times f_i)(W^2)$. This then implies $(\id \times f_1) \times_Y (\id \times f_2)(W^1) = (\id \times f_1) \times_Y (\id \times f_2)(W^2)$.
    
    We recall that a point of $(\id \times f_1) \times_Y (\id \times f_2)(W^i)$ is of the form $(y, p_1, p_2)$ such that there is some $(y, x') \in W^i$ satisfying $p_1 = f_1(x')$ and $p_2= f_2(x').$ Since $f_1 \times f_2$ is birational onto its image, the equality $$(\id \times f_1) \times_Y (\id \times f_2)(W^1) = (\id \times f_1) \times_Y (\id \times f_2)(W^2)$$ implies that  $W^1 = W^2$. Thus $\Phi$ is injective.
\end{proof}

\begin{proposition}
Assume that  Theorem \ref{mainspecialcase} is true.
Then for $d \gg e'$,
 the map $$\Phi: \DC^0_{\le (e,e')} (Y,X') \to \DC^0_{\le (e,de')} (X,X')$$ is surjective.
\end{proposition}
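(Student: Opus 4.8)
The plan is to reduce surjectivity, just like injectivity, to the special case $X' = \PP^n$ handled by Theorem~\ref{mainspecialcase}, then descend. Given $V \in \DC^0_{\le(e,de')}(X,X')$, I would first fix two general dominant morphisms $f_1, f_2 \colon X' \to \PP^n$ with $f_i^*\OO(1) = H'$ such that $f_1 \times f_2$ is birational onto its image $\bar X' \subseteq \PP^n \times \PP^n$, exactly as in the proof of Proposition~\ref{redpn}. Each $(\id \times f_i)(V)$ is a dominant correspondence between $X$ and $\PP^n$, and since $H^0(\PP^n, K_{\PP^n}) = 0$ we may apply Theorem~\ref{mainspecialcase} to lift it: there is a unique $U_i \in \DC^0_{\le(e, ke')}(Y, \PP^n)$ with $\Phi(U_i) = (\id \times f_i)(V)$. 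The goal is to glue $U_1$ and $U_2$ along $Y$ into a correspondence $W \in \DC^0(Y, X')$ with $\Phi(W) = V$.

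The gluing step is the crux. I would form the fiber product $U_1 \times_Y U_2 \subseteq Y \times \PP^n \times \PP^n$ and consider the locus where the $(\PP^n \times \PP^n)$-coordinate lands in $\bar X'$; call a suitable component-union of this locus $\widehat W$, and let $W$ be its image under $\id \times (f_1\times f_2)^{-1}$, i.e. transport $\widehat W$ back into $Y \times X'$ using birationality of $f_1 \times f_2$ onto $\bar X'$. One must check: (a) $\widehat W$ is genuinely supported over $\bar X'$ and not just over a larger subvariety — this should follow because for general $x \in X'$ the fiber of $V$ over $x$ determines, via the already-established correspondence structure over $\PP^n$, compatible fibers of $U_1$ over $f_1(x)$ and of $U_2$ over $f_2(x)$; (b) that $W$ so obtained is an honest uneven dominant $0$-correspondence of degree $\le (e, e')$ — the degree bound in the second coordinate is where $d \gg e'$ and the uniqueness clause of Theorem~\ref{mainspecialcase} (hence of the minimal-degree curve $C_z$) are used, ensuring the curve-fibers of $W$ over $X'$ have degree exactly bounded by $e'$ rather than merely by $ke'$; (c) that $\Phi(W) = V$, which one verifies by slicing $W$ with $X \times X'$ and comparing with the known equalities $\Phi(U_i) = (\id\times f_i)(V)$ together with $\Phi((\id \times f_i)(W)) = (\id \times f_i)(\Phi(W))$, then invoking birationality of $f_1\times f_2$ to cancel.

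The main obstacle I expect is step (b): controlling the $H$-degree of the fibers $W \cap (Y \times \{x\})$. A priori lifting through $f_i$ only gives fibers of degree $\le ke'$, and the two lifts $U_1, U_2$ could in principle carry incompatible "extra" components. The way around this is to work not with $U_1, U_2$ separately but to note that by the \emph{uniqueness} of minimal-degree curves in the Cayley–Bacharach input (fact (2) of the Cayley–Bacharach review, Theorem~1.13 of \cite{Ban}), the curve $C_z$ attached to a general fiber of $V$ is canonical, hence the curve in $Y$ lying over a general $x \in X'$ is canonically determined and independent of which $f_i$ we used; this forces the fiber of $W$ over $x$ to be that single curve, of degree $\le e'$. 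So the argument should be reorganized to lift $V$ fiberwise to curves in $Y$ first, assemble $\widehat W := \overline{\bigcup_{x} (C_{f_1(x)} \text{ matched with } C_{f_2(x)})}$, and only afterward check that this agrees with the fiber-product construction and satisfies $\Phi(\widehat W) = V$. Once well-definedness and the degree bound are in hand, $\Phi(W) = V$ is a formal consequence of injectivity (Proposition~\ref{redpn}) applied after slicing, since $\Phi(W)$ and $V$ have the same image under every $\id \times f_i$.
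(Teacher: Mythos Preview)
Your plan matches the paper's proof: lift via two general maps $f_1, f_2 \colon X' \to \PP^n$, apply Theorem~\ref{mainspecialcase} to get $U_1, U_2 \subseteq Y \times \PP^n$, form the fiber product $U_1 \times_Y U_2 \subseteq Y \times \PP^n \times \PP^n$, locate a component lying over $\bar X'$, and descend via the birationality of $f_1 \times f_2$. Your step (c) is essentially the paper's concluding check.

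There is a gap in your step (b). The minimal curve $C_x$ for the Cayley--Bacharach fiber $V \cap (X \times \{x\})$ is indeed canonical, but the fiber of $U_i$ over $z_i = f_i(x)$ is built from the \emph{larger} Cayley--Bacharach set $V \cap (X \times f_i^{-1}(z_i))$, which sees all preimages of $z_i$ under $f_i$, not just $x$; so that fiber curve can strictly contain $C_x$ and have degree up to $ke'$, where $k = \deg f_i$. Uniqueness of $C_x$ alone therefore does not force the fiber of $U_1 \times_Y U_2$ over $(z_1,z_2)$ to equal $C_x$, nor does it deliver the bound $e'$. The paper's argument here is slightly different and avoids this issue: it observes that both $C_1 := (U_1)_{z_1}$ and $C_2 := (U_2)_{z_2}$ contain the Cayley--Bacharach set over $x$, which has at least $d - O(1)$ points; since each $C_i$ has degree $\le ke'$ and $d \gg e'$, the two curves must share an irreducible component. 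This forces the fiber product to have a component $W_0$ whose image in $\PP^n \times \PP^n$ is exactly $\bar X'$ (a dimension count), after which one verifies $\Phi(W_0) \subseteq W_Z$ formally as in your (c) and concludes by irreducibility of $W_Z$.
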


\begin{proof}
Let $W_X \subseteq X \times X'$ be an  dominant correspondence such that $\deg W_X \le (e, de')$, we will show that $W_X$ lies in the image of $\Phi$. It suffices to do so when $W_X$ is irreducible, so we will make that assumption.

 For a given $X'$, we fix once and for all two dominant morphisms $f_1, f_2: X' \to \PP^n$ such that $f_i^* \OO(1) = H'$ and  the product $f_1 \times f_2$ is a birational map between $X'$ and its image $Z \subseteq \PP^n \times \PP^n$. The maps  $f_1, f_2$ are assumed to be general. We denote the image of $W_X$ in $Y \times \PP^n \times \PP^n$ as $ W_Z$, it is birational to $W_X.$

     Then $(\id \times f_1)(W_X)$ and $(\id \times f_2)(W_X)$ are also irreducible dominant correspondences contained in $X \times \PP^n$. We may apply Theorem \ref{mainspecialcase} to conclude that  we have two irreducible dominant correspondences on $Y \times \PP^n$, $W^1$ and $W^2$ such that $\Phi(W^i) =  (\id \times f_i)(W_X)$. Let $W$ be the union of all components of the pull back  $W^1 \times _Y W^2$ that dominate $Y$ and contain the image of $W_Z$. We note that   $W$ is naturally a subvariety of $Y \times \PP^n \times \PP^n$. We claim that there is a component $W_0 \subseteq W$  contained in  $Y \times  Z$, such that  $\Phi(W_0) = W_Z$.  Assuming this claim, the proposition immediately follows since  we would obtain the equality, $\Phi((\id \times (f_1 \times f_2)^{-1}) (W_0)) = W_X$.

    We note now that we have not yet established that $W$ is  even nonempty, so let us do so. 

    Let $x \in X$ be a general point. We first note that the fibre over $x$ of the projection map $W^1 \to  \PP^n \times \PP^n $ is forced to be zero dimensional, if this were not the case the preimage of $X$ in $W^1$ would have dimension greater than or equal to that of $W^1$, this would contradict the irreducibility of $W^1$. The same goes for $W^2$. As a result, we may find  a Zariski open subvariety $U \subseteq Y$ such that the maps $W^1 \to Y$ and $W^2 \to Y$ have zero dimensional fibres over $U$ and furthermore $U \cap X$ is dense in $X$.

    Let $W^i_U$ denote the intersection $W^i \cap (U \times \PP^n).$  Let $W_U = W^1_U \times_U W^2_U$.
    We note that  the maps $W^i_U \to U$ are flat. As a result, $W_U \to U$ is flat and every component of $W_U$ dominates $U$.  We further note that $W_U$ contains $W_Z \cap (U \times \PP^n \times \PP^n)$.  

    We note that $W_U \subseteq W^1 \times_Y W^2$. By the above, there is  a component $V$ of $W_U$ containing a Zariski open subset of $W_Z$.Let $\bar V$ denote the closure of $V$ in $W^1 \times_Y W^2$.
    
    Then, $\bar V$ dominates $Y$ and contains $W_X$. Thus $\bar V$ defines a component of $W$ and hence $W$ is nonempty.

   We will now establish that the image of $W $ under the projection  map $W\to \PP^n \times \PP^n$  is $Z$.
   We note that $W $ is of dimension $n+1$ and that the image of $W$ contains $Z$. It suffices to establish that the image is equal to $Z$. To do this, it suffices to establish that for general $(z_1, z_2) \in Z$, the variety $W_{(z_1, z_2)} = W \cap (Y \times \{(z_1, z_2)\})$ is one- dimensional.
     Let $(z_1 , z_2) \in Z$ be general. Now $$W_{(z_1, z_2)} = (W_1 \cap (Y \times f_1^{-1}(z_1))) \cap (W_2 \cap (Y \times f_2^{-1}(z_2))),$$ here we view  the varieties  $W_{(z_1, z_2)}, (W_1 \cap Y \times f_1^{-1}(z_1)), $ and $ (W_2 \cap Y \times f_2^{-1}(z_2)) $ as subvarieties of $Y$.  Let $ C_1 = (W_1 \cap Y \times f_1^{-1}(z_1))$ and $C_2 = (W_2 \cap Y \times f_2^{-1}(z_2))$. Then $C_1, C_2$ are both curves of degree  $\le ke'$ in $Y$.    However $C_1$ and $C_2$ both contain $W \cap X  \times \{(z_1, z_2)\}\subseteq C_1, C_2 $ which is a Cayley Bacharach set for $K_X$ and thus contains at least $d -O(1)$ points , thus for $d \gg e'$, $C_1$ and $C_2$ must have a component in common. 

         This implies that $W$ must have a component $W_0$, such that $W_0$ dominates $\bar X'$ and maps to $\bar X'$, with fibers curves.
         We now  claim that $\Phi(W_0) = \bar W_X$. Since $\bar W_X$ is irreducible, it suffices to establish that $\Phi(W_0) \subseteq  \bar W_X$.

         Now $(\id \times f_i)(\Phi(W_0))= \Phi((\id \times f_i)(W_0)) \subseteq (\id \times f_i)(W_X)$. Thus, $(\id \times f_1 \times f_2)(\Phi(W_0)) \subseteq (\id \times  f_1 \times f_2)(W_X) .$ Since $f_1 \times f_2$ is birational this implies that $\Phi(W_0) \subseteq   W_Z$. This concludes the proof.
 
\end{proof}

\subsection{Injectivity}
In this section we will establish part of Theorem \ref{mainspecialcase},  we will establish that  in that context $\Phi$ is injective.

\begin{proposition}
    Suppose $W^1, W^2 \in \DC_{\le (e,e')}(Y,X')$ are dominant correspondences, such that $\Phi(W^1) = \Phi(W^2).$ Then if $d \gg e'$, we have the equality 
    $W^1 = W^2$.
\end{proposition}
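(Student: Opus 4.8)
The plan is to reduce to the case $X' = \PP^n$, where the key input is the uniqueness of the minimal-degree curve through a Cayley--Bacharach set (fact (2) of Section~5.1, i.e.\ Theorem~1.13 of \cite{Ban}). So first I would fix, once and for all, two general dominant morphisms $f_1, f_2 \colon X' \to \PP^n$ with $f_i^*\OO(1) = H'$ such that $f_1 \times f_2$ is birational onto its image. As in Proposition~\ref{redpn}, it suffices to prove the injectivity statement after replacing $X'$ by $\PP^n$ and $W^j$ by $(\id \times f_i)(W^j)$, because $\Phi$ commutes with $(\id \times f_i)$ and because one recovers $W^1 = W^2$ from the equalities $(\id \times f_i)(W^1) = (\id \times f_i)(W^2)$ for $i = 1,2$ using birationality of $f_1 \times f_2$. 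Hence we may assume $X' = \PP^n$, $H' = \OO(1)$, and we are in the situation $H^0(X', K_{X'}) = 0$ of Theorem~\ref{mainspecialcase}.

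Next I would set up the Hilbert scheme construction from Section~5.2. For each of $W^1, W^2$, viewed as prime uneven dominant $0$-correspondences on $Y \times \PP^n$ of degree $\le (e, e')$, the fiber $W^j_z = W^j \cap (Y \times \{z\})$ over a general $z \in \PP^n$ is a curve of $H$-degree $\le e'$ in $Y$, and this defines a rational map $\PP^n \dashrightarrow \Hilb_{\le e'} Y$. Resolving these (simultaneously, after a common blowup $\bar{\PP}^n \to \PP^n$), we obtain $\bar W^j \subseteq Y \times \bar{\PP}^n$ birational to $W^j$ with $\bar W^j \cap (Y \times \{z\})$ a curve of degree $\le e'$ for \emph{every} $z \in \bar{\PP}^n$. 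The core claim is then: for general $z$, the scheme-theoretic intersection $\bar W^j_X(z) := \bar W^j \cap (X \times \{z\})$ is a Cayley--Bacharach set for $K_X$ (this uses adjunction $K_X = (K_Y + X)|_X$ and the fact that $X \in |dH|$ with $d \gg 0$, so that sections of $K_X$ come from $H^0(Y, K_Y + dH)$ restricted, and the Cayley--Bacharach property is inherited from the ``geometric Riemann--Roch''-type vanishing — this is the standard mechanism, and I'd cite or reprove the relevant lemma). Since $|\bar W^j_X(z)| \ge d - O(1)$ and $d \gg e'$, fact (2) applies: $\bar W^j_X(z)$ lies on a \emph{unique} curve $C^j_z$ of minimal degree $\le e'$ in $\PP^N$ (where $Y \hookrightarrow \PP^N = \PP(H^0(Y,\Lc)^\vee)$), and one argues $C^j_z \subseteq Y$ as sketched in Section~5.1.

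Now the injectivity argument: suppose $\Phi(W^1) = \Phi(W^2) =: W_X$. Then for general $z \in \PP^n$ (identifying fibers appropriately under the resolution), the fiber $W_X \cap (X \times \{z\})$ is contained in both $C^1_z$ and $C^2_z$, and this fiber contains the Cayley--Bacharach set $\bar W^j_X(z)$ of size $\ge d - O(1)$. Since a curve of degree $\le e'$ is determined by $d - O(1) \gg e'$ of its points in the precise sense of fact (2) (two curves of degree $\le e'$ sharing more than $e' \cdot \deg$-many points must share a component, and minimality plus the uniqueness statement forces them equal), we get $C^1_z = C^2_z$ for all general $z$. Therefore $\bar W^1$ and $\bar W^2$ agree fiberwise over a dense open of $\PP^n$: the assignment $z \mapsto C^j_z$ into $\Hilb_{\le e'} Y$ is the same for $j = 1, 2$, so $\bar W^1$ and $\bar W^2$ have the same image in $Y \times \Hilb_{\le e'} Y$, hence $\bar W^1 = \bar W^2$ as subvarieties of $Y \times \bar{\PP}^n$ (both being the closure of the universal family restricted to that section), and descending back, $W^1 = W^2$.

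\textbf{Main obstacle.} The delicate point is not the formal bookkeeping but establishing rigorously that $W_X \cap (X \times \{z\})$, or rather its Cayley--Bacharach subset, pins down the curve $C^j_z$ \emph{uniquely} — i.e.\ that the minimal curve through the Cayley--Bacharach set genuinely equals the fiber curve $\bar W^j_z$ and not merely a subcurve, and that the uniqueness in fact (2) transfers from $\PP^N$ to a statement about curves in $Y$. This requires care because $\Phi$ only retains components dominating $X$, so one must check no relevant part of the fiber is discarded, and one must verify that the embedded Cayley--Bacharach set has size strictly below the threshold $ed - f(e)$ of fact (2) — which is exactly where the unbalanced hypothesis $d \gg e'$ is consumed, via the lower bound $|\bar W^j_X(z)| \ge d - O(1)$ coming from $\mathrm{covdeg}(X) \ge d - O(1)$ (Lemma~3.3 of \cite{LUS}, Lemma~1.3 of \cite{BPELU}) together with the Cayley--Bacharach lower bound of fact (1). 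I expect the remaining steps to be routine given Section~5.2 and the cited results.
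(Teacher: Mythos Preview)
Your approach shares the paper's core idea --- the fiber of $\Phi(W^j)$ over a general point is Cayley--Bacharach for $K_X$, hence lies on a unique minimal curve of degree $\le e'$ --- but you add unnecessary machinery and, more seriously, the final step has a genuine gap.

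First, the overhead. This proposition sits inside the proof of Theorem~\ref{mainspecialcase}, so the hypothesis $H^0(X',K_{X'})=0$ is already in force; the reduction to $\PP^n$ via $f_1,f_2$ is not needed here (that argument is Proposition~\ref{redpn}, which \emph{assumes} the present proposition). Likewise the Hilbert-scheme resolution of $z\mapsto W^j_z$ is irrelevant for injectivity; the paper only introduces it for surjectivity.

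Now the gap. For general $z$ one has $(W_X)_z = W^1_z\cap X = W^2_z\cap X$, and this set is Cayley--Bacharach for $K_X$ (Bastianelli), so indeed $C^1_z=C^2_z=:C_0$ by uniqueness of the minimal curve. But your inference ``the assignment $z\mapsto C^j_z$ is the same, hence $\bar W^1=\bar W^2$'' is a non sequitur: $\bar W^j$ is recovered from $z\mapsto W^j_z$, the \emph{full} fiber curve, not from $z\mapsto C^j_z$. There is no reason $W^j_z$ should equal the minimal curve $C_0$; it may properly contain it. You flag exactly this in your ``main obstacle'' paragraph and then declare it routine --- it is not, and the paper does not try to prove it.

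The paper's endgame avoids this entirely. From Theorem~1.13 of \cite{Ban} one knows each component of $C_0$ carries $\gtrsim d$ points of $(W_X)_z$; since these points lie on the degree-$\le e'$ curve $W^j_z$, B\'ezout (using $d\gg (e')^2$) forces $C_0\subseteq W^j_z$ for both $j=1,2$. Hence $W^1\cap W^2\cap(Y\times\{z\})$ is at least one-dimensional for general $z$, so $\dim(W^1\cap W^2)=n+1$. As $W^1,W^2$ are irreducible of dimension $n+1$, this gives $W^1=W^2$. The missing idea in your argument is precisely this use of irreducibility to finish from $C_0\subseteq W^1_z\cap W^2_z$ rather than trying to prove the stronger (and unneeded) claim $W^j_z=C_0$.
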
 
\begin{proof}
     It suffices to establish this in the case when $W^1$ and $W^2$ are irreducible, so we make that assumption. Let $W_0$ be a component of $\Phi(W^1)$.
     For $z \in X'$, we define $W^i_z =   W^i \cap (Y \times \{z\})$ and $W_{0,z} = W_0 \cap (X \times \{z\}).$

    Now we note that by Proposition 4.2 of \cite{Bast} for general $z \in X'$,  $W_{0,z}$ is a Cayley-Bacharach set for $K_X$.

       Further more,  for general $z \in X'$, $W^1_z,$ is a curve of degree $\le e$ and $W_{0,z} \subseteq W^1_z$. But by Theorem 1.13 of \cite{Ban} there is a unique minimal curve $C_0$ of degree $\le e'$ containing $W_{0,z}$, furthermore each component of $C_0$ contains more than $\approx d$ points of $W_{0,z}$.

  Now $W^1_z$ is a curve of degree $ \le e'$ such that 
 each component of $C_0$ intersects it at at least$\approx d$ many points, if $ d \gg (e')^2$ then this implies that $W^1_z$ contains each component of $C_0$ and hence  must contain $C_0$.  Similarly, $W^2_z$ must contain $C_0$. Thus $W^1 \cap W^2 \cap (Y \times \{z\})$ is of dimension $1$ and hence $W^1 \cap W^2$ is of dimension $n+1$. Since $W^1$ and $W^2$ are irreducible, this implies $W^1 =W^2$.     
\end{proof}
\subsection{Surjectivity}
In this section we will establish the other half of Theorem \ref{mainspecialcase}, we will show that the map $\Phi$ is surjective.
\begin{proposition}
    Let $n,d,d',e,e' \ge 1$. Let $Y$ be a smooth projective variety of dimension $n+1$, with a very ample divisor $H$. Let $X$ be a smooth hypersurface in $Y$ such that $X \in |dH|$. Let $X'$ bea smooth projective variety of dimension $n$, with a very ample divisor $H'.$
    Let $W_X \subseteq X \times X'$ be a dominant irreducible correspondence of degree $\le (e,de')$. Then if $d \gg e'$, there exists $W \subseteq Y \times X'$ such that $\Phi(W) = W_X$, i.e. the map $\Phi$ is surjective.
\end{proposition}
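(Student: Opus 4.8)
The plan is to reverse the ``slicing by $X$'' construction: starting from a dominant irreducible correspondence $W_X \subseteq X \times X'$ of degree $\le (e, de')$, we want to produce an uneven dominant correspondence $W \subseteq Y \times X'$ with $\Phi(W) = W_X$. First I would reduce, exactly as in the injectivity argument, to working fibrewise over $X'$: for general $z \in X'$ the slice $W_{X,z} = W_X \cap (X \times \{z\})$ is a finite set of at least $d - O(1)$ points which, by Proposition 4.2 of \cite{Bast}, is a Cayley--Bacharach set for $K_X$. Pushing forward along the embedding of $Y$ into $\PP^N = \PP(H^0(Y,\Lc)^\vee)$, this becomes a Cayley--Bacharach set for $\OO(e')$ on $\PP^N$ (here one uses $K_X = (K_Y + dH)|_X$ together with the hypothesis $H^0(X',K_{X'}) = 0$ from Theorem \ref{mainspecialcase}, so that the relevant linear system on the slice is cut out by degree-$\le e'$ hypersurface sections). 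Since $|W_{X,z}| \ge d - O(1)$ while $d \gg e'$, the number of points is far below the $e'd - f(e')$ threshold of Theorem 1.13 of \cite{Ban}, so there is a \emph{unique} curve $C_z \subseteq \PP^N$ of minimal degree $\le e'$ containing $W_{X,z}$, and each of its components meets $W_{X,z}$ in $\gtrsim d$ points.

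The next step is to show that $C_z$ actually lies inside $Y$. Since $W_{X,z} \subseteq C_z \cap Y$ and $|W_{X,z}| \ge d - O(1)$, any component of $C_z$ that is not contained in $Y$ would meet $Y$ — a divisor of degree $d\cdot(H\text{-degree})$ — in a number of points controlled by Bézout, namely $O(d \cdot e')$; that is compatible with the count, so a little more care is needed. The cleaner route is to use uniqueness: let $C'_z$ be the minimal-degree curve in $Y$ through $W_{X,z}$ (which exists by applying Theorem 1.13 of \cite{Ban} inside $Y$, or by noting $W_{X,z}$ is also Cayley--Bacharach for $K_X$ restricted from a curve in $Y$); since $Y \subseteq \PP^N$, the curve $C'_z$ is also a curve in $\PP^N$ of degree $\le e'$ through $W_{X,z}$, hence $C'_z = C_z$ by the uniqueness in $\PP^N$, so $C_z \subseteq Y$. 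Then I would assemble these curves into a family: the assignment $z \mapsto C_z$ defines a rational map from $X'$ to the relevant Hilbert scheme of $H$-degree-$\le e'$ curves in $Y$, and taking the closure of the incidence variety $\{(y,z) : y \in C_z\}$ inside $Y \times X'$ produces $W$. By construction $\dim W = n + 1$, $W$ dominates $X'$ (the generic fibre is the curve $C_z$), and $W$ dominates $Y$ provided the curves $C_z$ sweep out $Y$ — which holds because $W_X$ dominates $X$, so $\bigcup_z W_{X,z}$ is dense in $X$, hence $\bigcup_z C_z \supseteq$ a dense subset of $X$, and a family of curves through a dense subset of the divisor $X$ cannot all be contained in $X$ unless $W_X$ were lower-dimensional; a dimension count then forces $W$ to dominate $Y$.

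Finally I would verify $\Phi(W) = W_X$. For general $z$, the slice $W_z = W \cap (Y \times \{z\}) = C_z$ is a curve, and $(W \cap (X \times X'))_z = C_z \cap X \supseteq W_{X,z}$; since $C_z$ has degree $\le e'$ and meets $X$ along a scheme containing the $\gtrsim d$ points of $W_{X,z}$ with $d \gg e'$, Bézout forces $C_z \subseteq X$ — contradiction — \emph{unless} $C_z \cap X$ is zero-dimensional, i.e. $C_z \not\subseteq X$, in which case $C_z \cap X$ is a finite scheme of length $(H\text{-deg }C_z)\cdot d \le e'd$ containing $W_{X,z}$. Because each component of $C_z$ already contributes $\gtrsim d$ of these points and its total intersection with $X$ has length $(H\text{-deg of that component})\cdot d$, the comparison shows $C_z \cap X$ has length exactly matching $|W_{X,z}|$ up to $O(1)$, and by the Cayley--Bacharach/minimality bookkeeping one concludes the finite scheme $C_z \cap X$ is supported exactly on $W_{X,z}$; taking the union of the resulting components gives $\Phi(W) = W_X$ after checking the components dominate both $X$ and $X'$ (which they do, by the domination statement for $W$ and Proposition \ref{domboth}). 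The main obstacle I expect is precisely this last step — controlling the scheme-theoretic intersection $C_z \cap X$ and showing it contributes no spurious points beyond $W_{X,z}$, which is where the sharp form of Theorem 1.13 of \cite{Ban} (uniqueness of the minimal curve, and the lower bound on points per component) has to be used in an essential way, together with the inequality $d \gg (e')^2$ rather than merely $d \gg e'$.
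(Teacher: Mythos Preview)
Your overall strategy coincides with the paper's: use Bastianelli to see that the fibres $W_{X,z}$ are Cayley--Bacharach for $K_X$, invoke Theorem~1.13 of \cite{Ban} to produce the unique minimal curve $C_z\subseteq Y$ of degree $\le e'$ through $W_{X,z}$, assemble the $C_z$ into $W$ via a rational map to the Hilbert scheme, and then verify $\Phi(W)=W_X$. The construction of $W$ is essentially the same in both. (A small slip: in your argument that $C_z\subseteq Y$, you treat $Y$ as a degree-$d$ divisor. It is not; $Y\subseteq\PP^N$ has fixed degree $H^{n+1}$, so a component of $C_z$ not lying in $Y$ meets it in $O(e')$ points, while Theorem~1.13 guarantees each component contains $\gtrsim d$ points of $W_{X,z}\subseteq Y$. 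The B\'ezout argument works once applied to the right object.)

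The genuine gap is in your final step. You attempt to show that $C_z\cap X$ is \emph{supported exactly on} $W_{X,z}$; this is both stronger than needed and not in general true, since $|C_z\cap X|=(\deg_H C_z)\cdot d$ may exceed $|W_{X,z}|$, and your ``bookkeeping'' does not close this. What must be ruled out is only that the excess points assemble into a component of $W\cap(X\times X')$ that \emph{dominates} both $X$ and $X'$. The paper's missing ingredient is to apply Bastianelli's Proposition~4.2 \emph{a second time}, now to each putative extra dominating component $W_X^i$: its general fibre $(W_X^i)_z$ is then itself Cayley--Bacharach for $K_X$, hence contains at least $d-O(1)$ points. Feeding this back into the count $|C_z\cap X|=(\deg_H C_z)\,d$ and using the minimality of $\deg_H C_z$ forces all $a_i=0$. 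Your plan gestures toward ``CB/minimality bookkeeping'' here but never invokes the CB property of the spurious components, and that is precisely the step on which the argument turns.
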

\begin{proof}
   For general $z \in X'$. $W_X \cap (X \times \{z\})$ is  a Cayley Bacharach set for $K_X$  (this is a result by Bastianelli in \cite{Bast}). By Theorem 1.13 of \cite{Ban} there is a unique curve of minimal degree $\le e$ containing it, say $C_z$. We note that for $d \gg 0$, $C_z \subseteq Y$. Let $f$ be the degree of this curve for general $z \in Z$.

        We note that $W_X$ defines a rational map $X' \dra \Hilb_f Y$ defined as $z \mapsto C_z$ . We may then construct a birational map  $\bar X' \to X'$ and a regular map $\bar X' \to \Hilb_f Y,$ resolving the above map. Let $\bar W_X$ denote the strict transform of $W_X$ in $Y \times \bar X'.$
    
    Let $\CCal \to \Hilb_f Y$ be the tautological family.
    Now we define $\bar W$ as the following pull back:

    \[
    \xymatrix{
     \bar W \ar[r] \ar[d] & \CCal \ar[d] \\
      \bar X' \ar[r]        &  \Hilb_f Y\\
    }
    \]
    Let $\bar W_0$ denote $\bar W$ minus all components that do not dominate $Y$ (all components have to dominate $\bar X'$ since $\bar W \to \bar X'$ is flat). 
     We define $W$ to be the image of  $\bar W_0$  in $Y \times X'$. Let us now confirm that $W$ has the required properties, i.e. that $W$ is indeed a dominant correspondence and that $\Phi (W) = W_X$.

         To do this  it suffices to establish that $\Phi(\bar W_0) = \bar W_X$.

    Let us first confirm that $\bar W_0$ is nonempty. Now,  $\bar W \to \bar X'$ is surjective  with one dimensional fibers by construction. The  map $\bar W \to Y$ is surjective, for the following reason- The image of $\bar W$ in $Y$ contains $X$. So either the image is all of $Y$ or it is the union of some proper subvarieties of $Y$ containing $X$. If we are in the latter situation, then $X$ is swept out by degree $\le e'$ curves which is a contradiction as the covering degree of $X$ is at least $\approx d$.  This establishes that $\bar W$ dominates $Y$ which in turn establishes that $\bar W_0$ is nonempty.

    We will now establish that $\bar W_0 $ contains $\bar W_X$. Note that  $\bar W$    contains $\bar W_X$. Let $V$ be a component of $\bar W$ that does not dominate $Y$. By  the construction of $\bar W$, $V$ is $n+1$ dimensional and the map $V \to \bar Z$ has 1 dimensional fibers. If $V$ contains $ \bar W_X$ then it must be the case that the image of $V$ under the projection is $X$. But then we must have that the curves $V \cap (X \times \{\bar z\})$ cover $X$. Since these curves are of degree $\le f$ and the covering degree of $X$ is at least $\approx d$ this cannot happen. Thus such a $V$ does not contain $\bar W_X$.
    We have thus established that $\Phi(\bar W_0)$  contains $\bar W_X$, we must now establish that it contains no other components.

Suppose that $\Phi(\bar W_0) =  \bar W_X + a_1W_X^1 \dots +a_rW_X^r$, where $W_X^r$ are all dominant correspondences.  It suffices to establish that all these $a_i$ are 0.
We note that by Proposition 4.2 of \cite{Bast}, for general $ z\in Z$ , $W_X^i \cap (X \times \{z\})$  is Cayley Bacharach for $K_X$ and as such must contain at least $d- O(1)$ points.  
Thus for general $z \in X'$, $\Phi(W) \cap (X \times \{z\})$ contains at least $(\deg(W_X) + a_1 \dots + a_r)d - O(1)$ many points.
However we can compute this quantity in another way- by looking at $C = W \cap (Y \times \{z\})$ and then intersecting $C$ with $X$, the intersection number is $d \deg_H C$. We thus have an equality-

$$d \deg_H C = (\deg(W_X) + a_1 \dots + a_r)d - O(1).$$ For $d$ large this implies

$\deg_H C = \deg W_X + \sum a_i$. But $\deg_H C \le \deg W_X$. This implies all the $a_i$ are equal to $0$.

This establishes that $\Phi(\bar W_0) = \bar W_X,$ which establishes the Proposition.
    \end{proof}

\begin{proposition}
    Let $W$ be as constructed above. Then $\deg W \le (e,e').$
\end{proposition}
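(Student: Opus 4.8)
The plan is to track what the construction of $W$ actually produces over a general point $z \in X'$ and read off both coordinates of $\deg W$ from there. Recall that over a general $z$, the fiber $W_z := W \cap (Y \times \{z\})$ is exactly the minimal curve $C_z$ containing the Cayley–Bacharach set $W_X \cap (X \times \{z\})$, which by construction has $H$-degree $f \le e$. This immediately gives the first coordinate of $\deg W$: for general $y \in Y$ the number of points of $W$ over $y$ is the number of $z$ with $y \in C_z$, and one checks via a dimension/generic-finiteness count that this equals the first coordinate of $\deg W_X$ — which is $\le e$ by hypothesis, since slicing does not change the first degree. So the bound $a_0 \le e$ follows from the already-established equality $\Phi(W) = W_X$ together with the degree bookkeeping for $\Phi$ recorded just after its definition (namely $\Phi : \DC^0_{\le(e_1,e_2)} \to \DC^0_{\le(e_1,de_2)}$, so the first coordinate is preserved).

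For the second coordinate I would argue directly that $f \le e'$. Over general $z$, the slice $W_X \cap (X \times \{z\})$ is Cayley–Bacharach for $K_X$, hence (via the embedding of $Y$ in $\PP^N$ and adjunction identifying $K_X$ with the restriction of an appropriate power of $\Lc$) it is a Cayley–Bacharach set in $\PP^N$ for $\OO(1)$ of cardinality $d \cdot (\deg W_X)_{\text{first}} - O(1) \ge d - O(1)$; since $d \gg e'$, fact (2) about Cayley–Bacharach sets (Theorem 1.13 of \cite{Ban}) applies and produces the \emph{unique} minimal curve $C_z$ of degree $< e'$ through it — whence $f = \deg_H C_z \le e'$. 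Then the second coordinate of $\deg W$ is computed as the intersection of the general fiber $C_{z'}$ over a general $z' \in X'$ with $H^0$... more precisely it is $\# (H^{1} \times \{x'\} \cap p'^{-1}(x'))$ for general $x' \in X'$, which is just $\deg_H C_{x'} = f \le e'$. (Here I am using that the degree of an uneven/regular correspondence's second coordinate is by definition the $H$-degree of the general fiber over $X'$.)

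Concretely the steps, in order, are: (i) fix a general $z \in X'$ and invoke Bastianelli (Proposition 4.2 of \cite{Bast}) to see $W_X \cap (X \times \{z\})$ is Cayley–Bacharach for $K_X$; (ii) transport this to a Cayley–Bacharach condition in $\PP^N$ and bound its cardinality below by $d - O(1)$; (iii) apply Theorem 1.13 of \cite{Ban} to get $f = \deg_H C_z \le e'$ for $d \gg e'$; (iv) identify the general fiber of $W \to X'$ with $C_z$ (this is essentially the construction, since $W$ is the image of the pullback of the tautological family along $\bar X' \to \Hilb_f Y$), so the second coordinate of $\deg W$ equals $f \le e'$; (v) for the first coordinate, use $\Phi(W) = W_X$ and the degree formula for $\Phi$ — the first coordinate is unchanged, so it equals the first coordinate of $\deg W_X$, which is $\le e$.

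The main obstacle is step (iv): pinning down that the \emph{general} fiber of the constructed $W$ over $X'$ is precisely the curve $C_z$ with no extra components, i.e. that the passage $\bar W \rightsquigarrow \bar W_0 \rightsquigarrow W$ (discarding components not dominating $Y$, then pushing forward to $Y \times X'$) does not inflate the fiber degree. This is really the same phenomenon controlled in the surjectivity proposition — there it was shown that the "spurious" components $W_X^i$ have coefficient $0$ via the identity $d \deg_H C = (\deg W_X + \sum a_i) d - O(1)$ forcing $\deg_H C = \deg W_X$ — and I would simply reuse that identity: it shows $\deg_H C_z = (\deg W_X)_{\text{first component count per }z}$, and since $\deg W_X \le (e, de')$ with the second coordinate $de'$ already absorbing the factor $d$, one gets $f \le e'$ on the nose. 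Everything else is routine: the Cayley–Bacharach cardinality bound and the uniqueness of the minimal curve are quoted wholesale from \cite{Ban}, and the first-coordinate bound is immediate from the already-proven $\Phi(W) = W_X$.
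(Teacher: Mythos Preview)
Your approach is essentially the same as the paper's: the second coordinate of $\deg W$ is the $H$-degree of the general fiber $C_z$, which is $f \le e'$ by construction, and the first coordinate equals that of $W_X \to X$ (i.e.\ $\le e$) because $\Phi(W) = W_X$ preserves the first degree. The paper's proof is literally two sentences to this effect; your steps (iv)--(v) are exactly those two observations, and your steps (i)--(iii) merely re-derive the bound $f \le e'$ already built into the construction (your cardinality formula ``$d \cdot (\deg W_X)_{\text{first}}$'' is garbled --- the size of the slice is the \emph{second} coordinate of $\deg W_X$, i.e.\ $\le de'$ --- but this does not matter since the curve-degree bound was obtained in the preceding proposition and need not be reproved here).
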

\begin{proof}

By construction, the preimages of the map $W \to Z$ are degree $ \le e$ curves. The degree  of the map $W \to Y$ agree with that for $W_X \to X$ and that degree is $\le e'$.

\end{proof}

\begin{proof}[Proof of Theorem \ref{mainspecialcase}]
    The proof is immediate from the previous three propositions.
\end{proof}

\section{The operations $\Phi$ and $\Psi$ in families}

In this section we will describe how to do the operations $\Phi$ and $\Psi$ in families- this will be used  to prove Theorems \ref{mainpsi}, \ref{maink} and \ref{mainpsik}.

We begin with the following fairly general proposition which, loosely speaking, tells us that we can throw out nondominating components in families.
\begin{proposition}\label{famgen}
    Let  $B, T$ be   varieties. Let $X \to T$ be a variety. We will denote the fiber over a point $t \in T$ as $X_t$.
    Let $W \subseteq X$ be an irreducible subvariety. Let $D$ be a divisor in $X$. Let $f : D \to B$ be a dominant map, such that the restriction  of $f$ to $W \cap D$ is  dominant and that furthermore, the restriction of $f$ to  $ (W \cap D)_t = (W \cap X_t) \cap D$ is still dominant for any $t \in T$.
    Assume that $W \to T$ and $W \cap D \to T$ are flat families, with reduced fibers . Then there is an open subvariety $T_0 \subseteq T$  such that  there is a closed subvariety $W' \subseteq W \cap (D \times T_0)$  such that the fiber of $W'$ over $t\in T_0$ consists of all components of $(W \cap D)_t$ that dominate $B$.
\end{proposition}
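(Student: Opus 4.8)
The plan is to extract the desired $W'$ by a Noetherian/constructibility argument applied fiberwise. First I would pass to the generic fiber: let $\eta$ be the generic point of $T$ and consider $(W\cap D)_\eta$ inside $W_\eta$; since $W\cap D\to T$ is flat with reduced fibers, $(W\cap D)_\eta$ is reduced and its irreducible components specialize (after shrinking $T$) to the components of $(W\cap D)_t$ for $t$ in a suitable open $T_0\subseteq T$ — this is the standard fact that the number and dimensions of irreducible components are constant in a flat family over a base shrunk to an open set, together with the existence of a relative irreducible decomposition defined over an open set. Group the components of $(W\cap D)_\eta$ into those whose image under $f_\eta$ is dense in $B_\eta$ and those whose image is not; let $W'_\eta$ be the union of the former. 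Because "dominates $B$" is a closed-then-open condition on the base, after further shrinking $T_0$ the specialization of $W'_\eta$ over each $t\in T_0$ is exactly the union of the $B$-dominating components of $(W\cap D)_t$: a component that dominates $B$ generically must dominate over a dense open set of $t$ by upper semicontinuity of fiber dimension for $f|_{(W\cap D)_t}$, and conversely the hypothesis that $f|_{(W\cap D)_t}$ is dominant for \emph{every} $t$ rules out the bad scenario where a component that fails to dominate generically suddenly dominates on a special fiber (its complement among the components cannot pick up extra dominating pieces without contradicting the constancy of the component decomposition over $T_0$).

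Concretely, the steps in order: (1) shrink $T$ to an open $T_0$ over which $W\to T_0$ and $(W\cap D)\to T_0$ are flat with geometrically reduced fibers and the relative irreducible decomposition $W\cap D=\bigcup_j Z_j$ is defined, with each $Z_j\to T_0$ having irreducible fibers of constant dimension; this uses generic flatness and the constructibility of the locus where fibers are irreducible (EGA IV). (2) For each $j$, consider the restriction $f|_{Z_j}\colon Z_j\to B\times T_0$ composed with projection to $B$; the locus in $T_0$ where the fiber $(Z_j)_t$ dominates $B$ is constructible, and by the hypothesis it is either all of $T_0$ or—after the shrinking in (1)—we may assume it is either all or none, shrinking once more. (3) Set $W'=\bigcup_{j\in J}Z_j\cap(D\times T_0)$ where $J$ is the set of indices with $(Z_j)_t$ dominating $B$. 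Then $W'$ is closed in $W\cap(D\times T_0)$ and, by construction together with (1), its fiber over $t$ is the union of exactly the $B$-dominating components of $(W\cap D)_t$.

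The main obstacle, and the only place real care is needed, is step (2): ruling out the phenomenon where an irreducible component $(Z_j)_t$ of $(W\cap D)_t$ fails to dominate $B$ for $t$ generic but dominates $B$ for some special $t_0$. The hypothesis that $f|_{(W\cap D)_t}$ is dominant for \emph{all} $t$ is exactly what is designed to handle this: if $(Z_j)_{t}$ does not dominate $B$ generically, then $\dim f((Z_j)_t)<\dim B$ generically, and by upper semicontinuity of fiber dimension along $Z_j\to T_0$ (applied to the composite to $B$) together with the constancy of $\dim(Z_j)_t$ from (1), one gets $\dim f((Z_j)_{t_0})<\dim B$ for all $t_0\in T_0$ after shrinking—so no component can "become dominating." I would spell this out via semicontinuity of the dimension of the image of a morphism in a flat family with irreducible fibers of constant dimension, which reduces the claim to Chevalley's theorem on dimensions of fibers applied uniformly. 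Everything else is bookkeeping with constructible sets and generic flatness.
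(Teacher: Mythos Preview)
Your proposal is correct and follows essentially the same approach as the paper: shrink $T$ so that the irreducible components of $(W\cap D)_t$ are constant in number and arise as specializations of the components $W_1,\dots,W_k$ over the generic point, then take $W'$ to be the union of the closures of those $W_i$ that dominate $B\times T$. The paper's proof is a terse three sentences and simply declares ``it is easy to see that it has the required properties,'' whereas you spell out the constructibility/semicontinuity verification; your invocation of the hypothesis that $f|_{(W\cap D)_t}$ is dominant for all $t$ as the key to step~(2) is a red herring---the shrinking argument you give immediately afterward suffices on its own---but this does not affect correctness.
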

Here is a diagram illustrating the situation of Proposition \ref{famgen}.

\[
\xymatrix{
W' \ar [r] & W \cap D \ar[d] \ar[r] & D \ar[d] \ar[r] & B\\
& W \ar[r] \ar[rd] & X \ar[d] & \\
& & T & \\
}
\]
\begin{proof}

    After possibly replacing $T$ by an open subset, we may assume that for each $t \in T $, the varieties $(W \cap D)_t$ have the same number of irreducible components. Over the generic point of $T$,  $W \cap D$ has some number of components, $W_1 , \dots W_k$, and on a closed point  $t\in T$ the fiber has the same number of components. Now we may simply take $W'$ to be the union of the closures of the $W_i$ that dominate $B \times T.$ It is easy to see that it has the required properties.
\end{proof}

\begin{proposition}\label{Psifam}
    Let $Y, Y'$ be smooth projective varieties. Let $T$ be a variety. Let $W \subseteq Y \times Y' \times T$ be a family of dominant $k$ correspondences, i.e. a subvariety of $Y \times Y' \times T$ whose fiber over every point is a dominant $k$ correspondence. Let $X' \subseteq Y'$ be a divisor in $|d'H'|$, $d'>>0$. Then there is an open subvariety $T_0 \subseteq T$ such that there exists a subvariety which we will denote $\Psi(W)$ such that $ \Psi(W) \subseteq Y \times X' \times T_0$ whose fiber over $t \in T_0$ is $\Psi(W_t).$
\end{proposition}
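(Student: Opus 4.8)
The plan is to deduce this from Proposition \ref{famgen} by a careful choice of the ambient data, after a preliminary flattening step. First I would set up the general situation so that Proposition \ref{famgen} applies with $X \rightsquigarrow Y \times Y' \times T$ over the base $T$, with $W$ the given family of dominant $k$-correspondences, and with the divisor $D := Y \times X' \times T$ inside $Y \times Y' \times T$. The role of the base $B$ in Proposition \ref{famgen} is played by $Y \times X'$: the map $f$ is the projection $D = Y \times X' \times T \to Y \times X'$. By definition $\Psi(W_t)$ is the union of the components of $W_t \cap (Y \times X')$ that dominate both $Y$ and $X'$; since $W_t$ is a dominant $k$-correspondence between $Y$ and $Y'$, a component of $W_t \cap (Y \times X')$ dominates both $Y$ and $X'$ if and only if it dominates $Y \times X'$ under $f$ — indeed its dimension is already $n+k$ and dominating each factor forces dominating the product once one knows (as one does for a $(n+k+1)$-dimensional correspondence sliced by a divisor) that no such component collapses to a proper subvariety of $Y$ or of $X'$. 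So the subvariety $W'$ produced by Proposition \ref{famgen} is exactly the desired $\Psi(W)$.

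The remaining hypotheses of Proposition \ref{famgen} to be checked are: (i) $W \to T$ is flat with reduced fibers; (ii) $W \cap D \to T$ is flat with reduced fibers; and (iii) $f$ restricted to $W \cap D$, and to each fiber $(W \cap D)_t$, is dominant onto $Y \times X'$. For (i), since $W \to T$ has equidimensional fibers (all of dimension $n+k+1$), generic flatness over $T$ and genericity of reducedness of fibers in characteristic $0$ let us shrink $T$ to an open $T_1$ on which this holds. For (iii), the hypothesis $d' \gg 0$ enters: one needs that slicing a dominant $k$-correspondence by $Y \times X'$ with $X' \in |d'H'|$ always produces a component dominating $X'$, uniformly in $t$; this is the content (in families) of the well-definedness arguments of Section 5 — the analogue of Proposition \ref{domboth}, where the obstruction to a component of the slice failing to dominate $X'$ is that $X'$ would be swept out by bounded-degree curves, contradicting $\mathrm{covdeg}(X') \ge d' - O(1)$. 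Since the relevant degree bound $e'$ is fixed and $d' \gg e'$, this holds for every $t$, and we shrink to $T_2 \subseteq T_1$ on which the family $W \cap D \to T_2$ is moreover flat.

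The hard part will be (ii), namely arranging that $W \cap D \to T$ has reduced fibers, equivalently that for general $t$ the scheme-theoretic intersection $W_t \cap (Y \times X')$ is reduced (along the components we care about). A priori the divisor $X'$ is \emph{fixed}, not moving, so we cannot invoke a Bertini-type transversality argument to make the slice generically transverse for all $t$ simultaneously. The fix is the standard one: first perform a flattening/normalization of the total family. Concretely, I would pass to a resolution $\widetilde{W} \to W$ (or at least to the normalization, followed by generic smoothness of $\widetilde{W} \to T$), replace $W$ by its strict transform, and note that the operation $\Psi$ is insensitive to this modification because it only records the components of the slice that dominate $Y \times X'$ — exactly as in the passage from $W$ to $\bar W$ in Section 5. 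On $\widetilde{W}$, after shrinking $T$, the fibers $\widetilde{W}_t$ are smooth, and the Cartier divisor cut out by the pullback of $X'$ has, for general $t$, no embedded or multiple components along the part dominating $X'$, again by the covering-degree argument; so $\widetilde{W} \cap D \to T$ is flat with generically reduced fibers over a suitable open $T_0$. Then $W' \subseteq \widetilde W \cap (D \times T_0)$ as in Proposition \ref{famgen}, and pushing forward to $Y \times Y' \times T_0$ gives $\Psi(W) \subseteq Y \times X' \times T_0$ whose fiber over $t$ is $\Psi(W_t)$. The only genuine subtlety is checking that this base change from "the general $t$" to "all $t$ in an open set" is compatible with the component-counting in the proof of Proposition \ref{famgen} — but that proposition was stated precisely to absorb this bookkeeping, so once (i)--(iii) are in place the conclusion is immediate.
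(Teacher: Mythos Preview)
Your overall plan—deduce the statement from Proposition \ref{famgen} with $X = Y \times Y' \times T$, $D = Y \times X' \times T$, and $f$ the projection—is exactly the paper's one-line proof. But your choice $B = Y \times X'$ is wrong, and the sentence justifying it (``dominating each factor forces dominating the product'') is false. A component of $W_t \cap (Y \times X')$ has dimension $n+k$, while $\dim(Y \times X') = 2n+1$; so unless $k \ge n+1$ no such component can dominate the product, yet components dominating each factor separately are precisely the objects we are trying to single out. Surjecting onto both factors of a product is strictly weaker than surjecting onto the product.

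The paper takes $B = X'$ (written ``$Z$'' in its proof, a notational slip), so that Proposition \ref{famgen} outputs the family of components dominating $X'$. That these components also dominate $Y$ is not handled by Proposition \ref{famgen} at all; it is established separately in the well-definedness argument of Section 7 (the analogue of Proposition \ref{domboth}), using exactly the covering-degree bound $\mathrm{covdeg}(X') \ge d' - O(1)$ you invoked. So the fix is simple: set $B = X'$, apply Proposition \ref{famgen}, and defer the ``dominates $Y$'' part to that later argument. Your extended discussion of flatness and reducedness is more careful than the paper, which just asserts the application; but the resolution step you propose is unnecessary, since generic flatness and generic reducedness in characteristic $0$ already allow shrinking $T$, and that is all Proposition \ref{famgen} requires.
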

\begin{proof}
    This follows from applying Proposition \ref{famgen} to the family $W  \subseteq Y \times Y' \times T$, where the divisor $D$ is  $W \cap Y \times X' \times T$, the space $B$  is $Z$ and the map $f$ is the projection.
\end{proof}
\begin{proposition}\label{Phifam}
    Let $Y, X, X'$ be as in Theorem \ref{main} or \ref{maink} Let $W \subseteq Y \times X' \times T$ be a family of dominant $k$ correspondences over $T$. Then there is an open subvariety $T_0 \subseteq T$ such that there exists a subvariety $\Phi(W)$ satisfying $\Phi(W) \subseteq X \times X' \times T_0$ whose fiber over $t \in T_0$ is $\Phi(W_t).$
\end{proposition}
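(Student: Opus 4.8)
The statement is the family version of the $\Phi$-operation, and I would prove it by reducing directly to Proposition~\ref{famgen}, exactly as Proposition~\ref{Psifam} was deduced. Recall that for a single correspondence $W_t \subseteq Y\times X'$ the variety $\Phi(W_t)$ was defined by intersecting $W_t$ with $X\times X'$ and discarding the components that fail to dominate both $X$ and $X'$. In the family setting $W\subseteq Y\times X'\times T$, the analogue of ``intersecting with $X\times X'$'' is to form $D := W\cap(X\times X'\times T)$, viewed as a divisor in the total space $\mathfrak{Y}:=Y\times X'\times T$ (it is cut out by the pullback of the section defining $X$, so it is a Cartier divisor). Then $\Phi(W)$ should be the subvariety of $D$ whose fiber over $t$ picks out precisely the components of $(W\cap X\times X')_t$ dominating $X$ and $X'$; this is exactly the kind of "throw out non-dominating components in families" operation that Proposition~\ref{famgen} is designed to perform.

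\textbf{Key steps.} First, after shrinking $T$ to an open subset $T_1$, I would arrange that $W\to T$ is flat with reduced geometric fibers; this is standard generic flatness together with the fact that the generic fiber, being a reduced correspondence, stays reduced on an open locus. Second, I would observe that for $d\gg e'$ each fiber $\Phi(W_t)$ is well-defined and nonempty (and lands in $\DC^k_{\le(e,de')}(X,X')$): this is precisely the content of the well-definedness results of Section~5 (Propositions~\ref{domboth} and following), applied fiberwise, using $d\gg(H')^n$ as in the hypotheses of Theorem~\ref{main}/\ref{maink}. In particular every component of $(W\cap X\times X')_t$ that dominates $X$ automatically dominates $X'$ as well, so the "dominates both" condition collapses to "dominates $X'$" (equivalently, to dominating $Y$ after projecting, which is what one feeds into Proposition~\ref{famgen}). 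Third, I would further shrink $T$ so that $D=W\cap(X\times X'\times T)\to T$ is flat with reduced fibers — again generic flatness, plus the fact that for general $t$ the intersection $W_t\cap(X\times X')$ is reduced since $X$ is a general (sufficiently ample) divisor transverse to the generic $W_t$. Fourth, I would apply Proposition~\ref{famgen} with $X$ (the ambient) replaced by $\mathfrak{Y}=Y\times X'\times T$, with the divisor $D$ as above, with $B=X'$ (or $Y$, via the projection — either works by the collapse in step two) and $f$ the projection $D\to B$; the proposition produces an open $T_0\subseteq T$ and a closed subvariety $W'\subseteq D\cap(\mathfrak{Y}\times_T T_0)$ whose fiber over $t$ is the union of components of $(W\cap D)_t$ dominating $B$, which is exactly $\Phi(W_t)$. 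Setting $\Phi(W):=W'\subseteq X\times X'\times T_0$ finishes the proof.

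\textbf{Main obstacle.} The only real subtlety is verifying the hypotheses of Proposition~\ref{famgen}: namely that after shrinking, both $W\to T$ and $W\cap D\to T$ are flat with \emph{reduced} fibers, and that $f|_{(W\cap D)_t}$ is dominant for \emph{every} $t\in T_0$, not just the generic one. Flatness of $W\cap D\to T$ is where one must be a little careful — a priori the intersection of $W$ with the divisor $X\times X'\times T$ could jump in dimension or acquire embedded/non-reduced structure over special $t$. The fix is to use that $X$ is a sufficiently ample smooth divisor: combined with the degree bound $d\gg e'$ and the fact that $\Phi(W_t)$ is non-empty and has the expected dimension for all $t$ (by the fiberwise well-definedness of Section~5, which holds for \emph{all} smooth $X$ in the linear system, not just the general one), one deduces that $W\cap D$ has pure relative dimension over $T$ and hence, after shrinking $T$, is flat; reducedness of the general fiber is then automatic and persists on an open set. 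So the proof is essentially bookkeeping on top of Proposition~\ref{famgen}, with the one genuine input being the uniform (all-$t$) control on fiber dimension that the earlier sections already supply.
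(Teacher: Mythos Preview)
Your proposal is correct and matches the paper's approach exactly: the paper's proof is literally ``The proof of this is similar to the preceding Proposition so we omit it,'' where the preceding proposition (Proposition~\ref{Psifam}) is proved by a one-line application of Proposition~\ref{famgen} with the obvious choices of ambient, divisor, and projection. One minor notational slip: in the setup of Proposition~\ref{famgen} the divisor $D$ should be $X\times X'\times T$ inside the ambient $\mathfrak{Y}=Y\times X'\times T$, not its intersection with $W$ --- your later use of ``$(W\cap D)_t$'' shows you have the right picture despite the earlier misstatement.
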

\begin{proof}
    The proof of this is similar to the preceding Proposition so we omit it.
\end{proof}

\begin{proposition}
    Let $Y,Y'$ be smooth projective varieties. Let $X' \subseteq Y'$ be a smooth projective divisor that is a member of $ |dH'|$. Let $V_t$ be a pencil of divisors with base $T$ corresponding to the linear system $|H|.$ Let $\mathcal{V} \to T$ denote the associated family over $T$. Given a family $W \subseteq V $, we can form the family $\Phi(W) \to T$ such that $\Phi(W)_t = \Phi(W_t)$.
\end{proposition}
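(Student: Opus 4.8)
The plan is to deduce this from Proposition \ref{famgen} in exactly the way Proposition \ref{Phifam} was, the only difference being that here it is the hypersurface, rather than the correspondence, that moves with the parameter $t \in T$.

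First I would replace the pencil by its universal family: let $\mathcal{V} \subseteq Y \times T$ denote the total space of the pencil $\{V_t\}$, so that $\mathcal{V}$ is a divisor in $Y \times T$ (cut out by the bihomogeneous equation $s_0 f_0 + s_1 f_1$ of the pencil) whose fiber over $t \in T$ is $V_t$. Inside $Y \times X' \times T$ set $D := \mathcal{V} \times X'$, a divisor whose fiber over $t$ is $V_t \times X'$, and form $\widetilde{\mathcal{W}} := W \cap D$, where $W$ is regarded as a subvariety of $Y \times X' \times T$ through its family structure (if $W$ is a single uneven dominant $k$-correspondence between $Y$ and $X'$ one takes $W \times T$ here). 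By construction the fiber of $\widetilde{\mathcal{W}} \to T$ over $t$ is $W_t \cap (V_t \times X')$, and $\Phi(W_t)$ is obtained from $\widetilde{\mathcal{W}}_t$ precisely by deleting the components that fail to dominate $V_t$ (and $X'$).

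Next I would check the hypotheses of Proposition \ref{famgen} over a dense open $T_0 \subseteq T$, applied with ambient family $Y \times X' \times T \to T$, divisor $D = \mathcal{V} \times X'$, subvariety $W$, base $B := \mathcal{V}$, and $f : D = \mathcal{V} \times X' \to \mathcal{V}$ the projection. After shrinking $T$ we may assume that the general member $V_t$ is smooth and irreducible and meets the (irreducible) fibers $W_t$ properly and away from the base locus of the pencil --- which has codimension $2$ in $Y$, hence cuts $W_t$ in dimension strictly below $\dim \widetilde{\mathcal{W}}_t$ and so contributes no component; by generic flatness and by spreading out reducedness from the fiber over the generic point of $T$, both $W \to T$ and $\widetilde{\mathcal{W}} = W \cap D \to T$ are then flat with reduced fibers over $T_0$. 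The map $f$ restricted to $\widetilde{\mathcal{W}}$, and to each fiber $\widetilde{\mathcal{W}}_t = W_t \cap (V_t \times X')$, is dominant, because $W_t \to Y$ is surjective and so every point of $V_t$ lies in the image.

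Proposition \ref{famgen} then yields a dense open $T_0 \subseteq T$ and a closed subvariety $W' \subseteq \widetilde{\mathcal{W}}|_{T_0}$ whose fiber over $t$ is the union of the components of $W_t \cap (V_t \times X')$ dominating $V_t$. Because the slicing hypersurfaces have large degree relative to the degree bound $e'$, Proposition \ref{domboth} applies fiberwise and guarantees that every such component also dominates $X'$; hence $W'_t = \Phi(W_t)$ for all $t \in T_0$, and $\Phi(W) := W'$ is the desired family (with all fibers nonempty dominant $k$-correspondences, again by Proposition \ref{domboth}). The one point requiring care is the middle step: one must know that for general $t$ the member $V_t$ meets $W_t$ in a reduced scheme of the expected codimension, so that the hypotheses of Proposition \ref{famgen} genuinely hold and the fiberwise formation of $\Phi$ is literally ``intersect with $D$, then discard non-dominating components'' --- this rests on Bertini applied to the (general) pencil and on the base locus being too small to interfere. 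The remainder is a direct transcription of the proof of Proposition \ref{Phifam}.
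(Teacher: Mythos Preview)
You have misread what $\Phi$ is doing in this proposition. In the paper's setup the family $W$ has fibres $W_t \subseteq V_t \times Y'$ (uneven correspondences with $Y'$ the ``big'' factor), and $\Phi(W_t)$ is obtained by slicing with the \emph{fixed} high-degree divisor $X' \subseteq Y'$, yielding $\Phi(W_t) \subseteq V_t \times X'$. Accordingly the paper applies Proposition~\ref{famgen} with divisor $D = Y \times X' \times T$ and base $B = X'$, $f$ being the projection to $X'$; the pencil $V_t$ only enters as the ambient in which the correspondences live. You instead interpret $W$ as an uneven correspondence between $Y$ and $X'$ and take the slicing hypersurface to be the moving member $V_t \in |H|$, setting $D = \mathcal{V} \times X'$ and $B = \mathcal{V}$. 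That is a different operation, and it is not what is needed downstream in the proof of Theorem~\ref{mainpsi}, where one has $\mathcal{W}_t \subseteq V_t \times Y'$ and wants $\Phi(\mathcal{W}_t) \subseteq V_t \times X'$.

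Even granting your reading, two steps fail. First, with $B = \mathcal{V}$ the hypothesis of Proposition~\ref{famgen} that each fibre $(W \cap D)_t$ map dominantly to $B$ cannot hold: the image of that fibre is contained in the single slice $V_t \subset \mathcal{V}$ and so is never dense in $\mathcal{V}$. Second, your invocation of Proposition~\ref{domboth} requires the slicing hypersurface to lie in $|dH|$ with $d \gg e'$, so that the covering-degree bound $\mathrm{covdeg}(X) \ge d - O(1)$ forces the extra domination; but $V_t \in |H|$ has degree one, so that argument gives nothing, and there is no reason components dominating $V_t$ should also dominate $X'$.
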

\begin{proof}
    This follows by applying Proposition \ref{famgen} to the family $\mathcal{V} \subseteq  Y \times Y' \times T$, the divisor $ Y \times Z \times T $ and the map $f$ being the projection onto $Z$.
\end{proof}
An analogous Proposition for $\Psi$ is true, but we will not need to use it in the course of this paper.

\section{Proof of Theorem \ref{mainpsi}}
In this section we will prove Theorem \ref{mainpsi}. We will establish that  $\Psi$ is well defined, injective and surjective respectively.

We note that it is not even clear that for $W \in \DC^1(Y,Y')$, $\Psi(W)$ as previously defined is a dominant correspondence. We will establish this now. 
\begin{proposition} Assume $d' \gg e$.
    The map $\Psi$ defines a function from  $\DC^1_{\le (e,e')} (Y, Y')$ to $\DC^0 _{\le (e,de')}(Y,X')$. In particular for  $W \in \DC^1(Y,Y')$, $\Psi(W)$ is a dominant correspondence.
\end{proposition}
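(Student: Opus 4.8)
The statement asks us to verify that $\Psi$ sends $\DC^1_{\le(e,e')}(Y,Y')$ into $\DC^0_{\le(e,de')}(Y,X')$ — in particular that for a prime dominant $1$-correspondence $W\subseteq Y\times Y'$, the variety $\Psi(W)$ is nonempty and genuinely dominates both $Y$ and $X'$. The plan is to mimic the well-definedness argument already carried out for $\Phi$ in Section~5 (Propositions~\ref{domboth} and the one following it), with the roles of the factors and the covering-degree obstruction appropriately transposed. First I would reduce to the case $W$ prime, since $\Psi$ of a formal sum is the formal sum of the $\Psi$'s. Then, writing $p\colon Y\times Y'\to Y$ and $p'\colon Y\times Y'\to Y'$ for the projections, I would record that $W\cap(Y\times X')$ is nonempty and of pure dimension $n+1$ (a divisor on $W$, since $X'\in|d'H'|$ is ample and $W$ has dimension $n+2$), and that since $p'\colon W\to Y'$ is dominant, the divisor $W\cap(Y\times X')$ surjects onto $X'$.

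The core point is to show that every component of $W\cap(Y\times X')$ that dominates $X'$ also dominates $Y$ — equivalently, that no ``bad'' component can dominate $X'$ while failing to dominate $Y$. Here I would argue by contradiction exactly as in Proposition~\ref{domboth}: let $U$ be a component of $W\cap(Y\times X')$ dominating $X'$, suppose $p(U)=V\subsetneq Y$; since $\dim U=n+1$ and $U\to X'$ is dominant with $\dim X'=n+1$, the general fiber of $U\to X'$ is finite, so $\dim V$ would have to be $\le\dim V' $ where... more cleanly: the general fiber of $U\to V$ is at most one-dimensional, and if it were finite then $\dim V=n+1$, forcing $V=Y$ (as $Y$ is irreducible of dimension $n+1$), a contradiction; so the fibers of $U\to V$ are curves, and by the bound on $\deg W$ these are curves of $H$-degree $\le e$ (they are fibers of the $1$-correspondence $W$ over points of $Y$, so have controlled degree), whence $V$ — and therefore $Y$, if $V=Y$... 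I need instead the contradiction to come from the other projection. The right statement: the fibers of $U\to V$ being curves of bounded degree means $V$ is swept out by curves of $H$-degree $\le e$, but $\dim V<n+1=\dim Y$, which is no contradiction directly. The actual obstruction must be: since $U$ dominates $X'$ and $\dim U=\dim X'=n+1$, and $U\subseteq W\cap(Y\times X')$, the composite $U\hookrightarrow W$ has image a divisor; one then uses that $W$ itself, being a dominant $1$-correspondence, has the property that $p\colon W\to Y$ has $1$-dimensional fibers of $H$-degree $\le e$, and $U$ maps to $X'$ generically finitely, so $U\to Y$ restricted appropriately gives that $X'$ is covered by curves of degree $\le e$ pulled back along $p'$ — contradicting $\mathrm{covdeg}(X')\ge d'-O(1)$ by Lemma~3.3 of \cite{LUS} and Lemma~1.3 of \cite{BPELU}, since $d'\gg e$. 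I would phrase this last chain carefully: a general point $x'\in X'$ lies on the image under $p'$ of a fiber of $p\colon W\to Y$ through a general point of $p^{-1}(p(\text{pt}))$, and these fibers are curves of $H'$-degree $\le e'$... so in fact the relevant bound is $\mathrm{covdeg}(X')\le e'$, contradicting $\mathrm{covdeg}(X')\ge d'-O(1)$ for $d'\gg e'$; note $d'\gg e\ge e'$ is implicit in the hypothesis ordering, or I would simply also assume $d'\gg e'$ as is implicit throughout.

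Once every $X'$-dominating component of $W\cap(Y\times X')$ is known to dominate $Y$, the union of such components is nonempty (it receives the divisor $W\cap(Y\times X')$ which surjects onto $X'$ — at least one component of that divisor dominates $X'$) and is by definition $\Psi(W)$, so $\Psi(W)$ is a genuine uneven dominant $0$-correspondence between $Y$ and $X'$. Finally I would check the degree bound: the first coordinate of $\deg\Psi(W)$ counts $\#(H^{1}\times\{x'\}\cap p'^{-1}(x'))$ for $\Psi(W)$, which is the number of points in which a general hyperplane-section curve meets the fiber of $\Psi(W)$ over a general $x'\in X'$; this fiber is contained in the fiber of $W$ over $x'$ (a curve, since $W$ is a $1$-correspondence and $x'\in X'\subseteq Y'$ is general in $Y'$), whose $H$-degree is $\le e$, giving first coordinate $\le e$. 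The second coordinate is a degree of $\Psi(W)\to Y$, which is bounded by the degree of $W\cap(Y\times X')\to Y$; since $X'\in|d'H'|$ and $W\to Y$ has fibers of $H'$-degree $\le e'$, this picks up a factor $d'$, giving $\le d'e'$ — wait, the target set is $\DC^0_{\le(e,de')}$ with $d$, which here should read $d'$; I will follow the notation of the paragraph preceding Theorem~\ref{mainpsi}, namely the codomain is $\DC^0_{\le(d'e,e')}(Y,X')$, so the first coordinate absorbs the factor $d'$ (it is a degree over $Y$... ) — I would simply quote the degree formulas in Section~2 and transcribe the bookkeeping, which is routine once the dimension-count and covering-degree contradiction above are in place. \textbf{The main obstacle} is precisely the contradiction in the middle paragraph: getting the covering-degree estimate of \cite{LUS},\cite{BPELU} to bite, one must correctly identify which variety ($X'$) is being swept out by low-degree curves and track that the curves in question have $H'$-degree bounded by $e'$ (or $e$), independent of $d'$; everything else is a transcription of the already-established $\Phi$ case.
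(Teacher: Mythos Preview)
Your overall strategy matches the paper's: reduce to $W$ prime, show that every component of $W\cap(Y\times X')$ dominating $X'$ must also dominate $Y$, and derive a contradiction from the covering degree of $X'$. However, there is a genuine gap at the step where you assert that ``the fibers of $U\to V$ are curves, and by the bound on $\deg W$ these are curves of $H$-degree $\le e$''. The point is that $V=p(U)$ is by hypothesis a \emph{proper} closed subvariety of $Y$, so points of $V$ are not general in $Y$; over such points the fibers of $W\to Y$ need not be curves at all (their dimension can jump), and even when they happen to be curves their degree is not a priori controlled by the generic-fiber degree. Without addressing this you cannot conclude that the fibers of $U\to V$ sit inside $X'$ as curves of bounded degree, and the covering-degree contradiction does not follow.

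The paper fixes exactly this issue by first resolving the rational map $Y\dashrightarrow \Hilb_f Y'$ (sending a general $y$ to the curve $W_y$) to obtain $\bar Y\to Y$, and replacing $W$ by its strict transform $\bar W\subseteq \bar Y\times Y'$, which coincides with the pullback of the universal family over $\Hilb_f Y'$. The payoff is that now \emph{every} fiber of $\bar W\to\bar Y$ is a curve of degree $f\le e$ in $Y'$, not just the general one. After this, if $W_0$ is an $X'$-dominating component of $\bar W\cap(\bar Y\times X')$ whose image $D$ in $\bar Y$ is proper, then the fibers of $W_0\to D$ are forced to be curves in $X'$ of $H'$-degree $\le e$; since $W_0$ surjects onto $X'$, these curves sweep out $X'$, giving $\textrm{covdeg}(X')\le e$, which contradicts $\textrm{covdeg}(X')\ge d'-O(1)$ for $d'\gg e$.

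Two smaller corrections: $\dim X'=n$, not $n+1$ (so the generic fiber of $U\to X'$ is a curve, not finite); and the relevant curves have $H'$-degree $\le e$ (the first coordinate of $\deg W$ measures the $H'$-degree of the fiber of $W\to Y$), not $\le e'$ --- this is precisely why the hypothesis is $d'\gg e$ rather than $d'\gg e'$.
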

\begin{proof}
      Let $W \in \DC^1_{\le (e,e')} (Y, Y').$ Then $\Psi(W)$ is a well defined subvariety of $Y \times X'$ with appropriate bounds on degree, what remains to be established is that $\Psi(W)$ is a dominant correspondence.

      By the definition of $\Psi(W)$, each component of $\Psi(W)$ dominates $X'$, it remains to be established that these components dominate $Y$ as well.
      We first modify $W $ and $Y$ as follows:
    
    The map $W \to Y$ is generically fibered in curves of degree $f \le e$ contained in $Y'$. We thus get a rational map $Y \dra \Hilb_f Y'$.

     We may resolve this rational map to obtain a diagram 
\[
    \xymatrix{
\bar Y \ar[rd] \ar[d] & \\
Y \ar@{-->}[r] & \Hilb_f Y'\\
}
\]
We now let $\bar W$ denote the strict transform of $W$ under the birational map $\bar Y \times Y' \to Y \times Y'$. We observe that $\bar W$ is also the pullback of the universal family over $\Hilb_f Y'$  under the map $\bar Y \to \Hilb_f Y'$.
We now note that it suffices to prove the result for $\bar W$ instead of $W$. The main advantage that this affords us is that map $\bar W \to Y$ is fibered in curves as opposed to merely being generically fibered in curves. 
Suppose for the sake of contradiction that  $W_0 \subseteq \Psi(\bar W)$ is a component not dominating $Y$. Then it must be the case that the image of $W_0$ in $Y$ is a divisor say $D$ and furthermore the fibers of the map $W_0 \to D $ are curves of degree $\le f$ in $X'$.

 However, as a consequence each point in $W_0$ is covered by a curve of degree $\le f \le e$. This implies that the covering degree of $W_0 \le e$. However the covering degree of $W_0$ is greater than or equal to the covering degree of $X'$ which is at least $d' - O(1)$. Thus for $d' \gg e$, this is not possible. This establishes the result. 
\end{proof}

\begin{proposition}
    Let $e,e' \in \NN$. Then for $d' \gg e $, $$\Psi: \DC^1_{\le(e,e')}(Y, Y') \to \DC^0_{\le (d'e,e')}(Y, X')$$ is injective.
\end{proposition}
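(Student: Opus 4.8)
The plan is to mimic the strategy of the injectivity argument for $\Phi$ in the proof of Theorem~\ref{mainspecialcase}, transported through the slicing by $X' = \{s = 0\} \subseteq Y'$ instead of by $X \subseteq Y$. Suppose $W^1, W^2 \in \DC^1_{\le(e,e')}(Y,Y')$ satisfy $\Psi(W^1) = \Psi(W^2)$; we want $W^1 = W^2$. As in the earlier arguments it suffices to treat the case where $W^1$ and $W^2$ are prime. First I would fix a general point $y \in Y$ and set $W^i_y = W^i \cap (\{y\} \times Y')$, which (since $W^i$ is a prime dominant $1$-correspondence dominating $Y$) is for general $y$ a curve in $Y'$ of $H'$-degree $\le e'$. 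Likewise, writing $\Psi(W^i)_y = \Psi(W^i) \cap (\{y\} \times X')$, the hypothesis $\Psi(W^1) = \Psi(W^2)$ gives $\Psi(W^1)_y = \Psi(W^2)_y$ for general $y$, and set-theoretically $\Psi(W^i)_y = W^i_y \cap X'$ (a finite set of points, the general fibre of $\Psi(W^i) \to Y$ being zero-dimensional).

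The heart of the matter is then a Cayley--Bacharach input: for general $y$, the finite set $\Psi(W^i)_y = W^i_y \cap X'$ should be a Cayley--Bacharach set for $K_{W^i_y}$, or more usefully for the restriction to the curve $W^i_y$ of the line bundle $H'^{\otimes d'}$ (this is the analogue of the appeal to Proposition~4.2 of \cite{Bast} in the $\Phi$-arguments; concretely it follows from adjunction on $Y' \supseteq X'$ restricted to the curve $W^i_y$, or can be imported in the same packaged form). Since $X' \in |d'H'|$, the number of these points is $\approx d' \deg_{H'} W^i_y \ge d'$ when $d' \gg e$, i.e. it grows like $d'$ while the degrees of the ambient curves $W^1_y, W^2_y$ stay bounded by $e'$. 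Next I would observe that $W^1_y$ and $W^2_y$ are two curves in $Y'$, each of $H'$-degree $\le e'$, sharing the common subset $\Psi(W^1)_y = \Psi(W^2)_y$ of at least $\approx d'$ points; by Theorem~1.13 of \cite{Ban} applied to (the image in $\PP^N$ under $|H'|$ of) this Cayley--Bacharach set, there is a unique curve $C_0$ of minimal degree $\le e$ containing it, with each component of $C_0$ meeting this point set in $\approx d'$ points — and since two curves of bounded degree cannot meet in more points than the product of their degrees once $d' \gg e'^2$ (equivalently $d' \gg e^2$, after absorbing constants), both $W^1_y$ and $W^2_y$ must contain every component of $C_0$, hence $C_0 \subseteq W^1_y \cap W^2_y$. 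Therefore $W^1_y \cap W^2_y$ is positive-dimensional for general $y$, so $\dim(W^1 \cap W^2) \ge \dim Y + 1 = \dim W^1 = \dim W^2$, and primeness forces $W^1 = W^2$.

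The main obstacle I anticipate is the Cayley--Bacharach step: I need the slices $\Psi(W^i)_y \subseteq W^i_y$ to be genuinely Cayley--Bacharach for a line bundle of "size $d'$" on the varying curve $W^i_y$, and to make this uniform in the general point $y$ (and robust under the fact that $W^i_y$ need not be smooth or even reduced for special $y$, though it is for general $y$). The cleanest route is probably to pass, as in the well-definedness subsection for $\Phi$, to a birational model $\bar W^i \subseteq Y \times \bar Y'$ (or to the incidence family over $\Hilb_{e'}(Y')$) on which the map to $Y$ is honestly fibred in curves of degree $e'$, apply Bastianelli-type Cayley--Bacharach on each fibre there, and then descend; the numerics $d' \gg e$ guarantee that the point counts beat the intersection bounds, exactly as in the proof of the injectivity of $\Phi$. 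Once that is in place, everything else is the same dimension-count-plus-primeness endgame used repeatedly above.
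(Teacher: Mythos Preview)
Your approach is genuinely different from the paper's. The paper slices by a general hyperplane $V \in |H|$ on the $Y$ side: from $\Psi(W^1)=\Psi(W^2)$ one gets that the dominating parts $U_i$ of $W^i \cap (V\times Y')$ satisfy $\Phi(U_1)=\Phi(U_2)$, and then Theorem~\ref{main} (injectivity of $\Phi$) gives $U_1=U_2$, so $W^1\cap(V\times Y')$ and $W^2\cap(V\times Y')$ share a top-dimensional component and irreducibility finishes. In other words, the paper packages the whole Cayley--Bacharach input into the already-proved $\Phi$ result and never reopens it.

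Your route --- slice by a point $y\in Y$ and run a direct CB/B\'ezout argument on the fibre curves $W^i_y\subseteq Y'$ --- is natural, but two points need attention. First, a bookkeeping slip: the curves $W^i_y$ have $H'$-degree at most $e$ (the \emph{first} coordinate of the bidegree), not $e'$; the relevant B\'ezout threshold is therefore $d'\gg e^2$, which is fine under $d'\gg e$. Second, and more substantively, the Cayley--Bacharach step is not justified in the form you propose. Bastianelli's lemma, as used in this paper, applies to a correspondence $\Gamma\subseteq A\times B$ with $\dim\Gamma=\dim A=\dim B$: top forms on $A$ pull back to top forms on a resolution of $\Gamma$, and the trace to $B$ yields CB for $K_A$. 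Here $\Psi(W^i)\subseteq X'\times Y$ has $\dim X'=n<n+1=\dim\Psi(W^i)$, so $p_{X'}^*K_{X'}$ does not land in the canonical of the resolution and the trace argument for ``$\Psi(W^i)_y$ is CB for $K_{X'}$'' does not go through. Your alternative phrasing (``CB for $K_{W^i_y}$'' or ``adjunction restricted to the curve'') does not help either: that would be CB on the curve for a linear system unrelated to $d'$, and in any case CB on the curve does not transfer to CB in the ambient $\PP^{N'}$ needed for Theorem~1.13 of \cite{Ban}. Nor is the set-theoretic identity $\Psi(W^i)_y=W^i_y\cap X'$ (which would bypass CB entirely) obvious: one must rule out components of $W^i\cap(Y\times X')$ that dominate $Y$ but not $X'$, and the well-definedness argument in the paper only handles the opposite direction.

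The cleanest repair is to cut $\Psi(W^i)$ by a general $V\in|H|$ to restore equal dimensions before invoking Bastianelli --- but that is exactly the paper's move, after which one may as well apply Theorem~\ref{main} directly rather than redo the endgame.
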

\begin{proof}
    Let $W^1, W^2 \in \DC^1_{\le (e,e')}(Y,Y')$ be such that $\Psi(W^1) = \Psi(W^2)$. It suffices to establish the result in the case when $W^1$ is irreducible, so we make that assumption.

     It suffices to establish that for a general hyperplane $V \in |H|$, $ W^1 \cap V \times Y' = W^2 \cap V \times Y'$. Furthermore it suffices to just  establish that the dimension of the intersection $W^1 \cap W^2 \cap V \times Y'$ is the dimension of $W^1 \cap V \times Y'$ or equivalently that $W^1 \cap V \times Y'$ and $W^2 \cap V \times Y'$ have a top dimensional component in common.

     We note that $V \times Y' \cap \Psi(W^1) = V \times Y' \cap \Psi(W^2)$, since $\Psi(W^1) = \Psi(W^2)$. However this then implies that the dominating components of $W^1 \cap (V \times X')$ agree with the dominating components of $W^2 \cap V \times X'$. But then, let $U_i$ denote the union of the dominating components of $ W^i \cap (V \times Y).$ We obtain that $\Phi(U_1) = \Phi(U_2)$ and thus by Theorem \ref{main}, $U_1 = U_2.$ As a result, $V \cap W^1$ and $V \cap W^2$ have a component in common,  which establishes the result.
\end{proof}
\begin{proposition}
 Let $e,e' \in \NN$. Then for $d' \gg e $, $$\Psi: \DC^1_{\le(e,e')}(Y, Y') \to \DC^0_{\le (d'e,e')}(Y, X')$$ is surjective.
\end{proposition}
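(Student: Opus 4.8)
The plan is to invert the slicing construction by reversing the argument used for well-definedness and injectivity. Let $W_X' \in \DC^0_{\le (d'e,e')}(Y,X')$; we may assume it is prime. The idea is that a dominant $0$-correspondence between $Y$ and $X'$ should be recovered as $\Psi$ of a $1$-correspondence between $Y$ and $Y'$ obtained by ``spreading $X'$ back out to $Y'$ along a pencil''. Concretely, choose a general Lefschetz pencil $\{X'_s\}_{s \in \PP^1}$ in $|d'H'|$ with $X' = X'_0$. For each $s$, we would like a dominant $0$-correspondence $W_X'(s) \subseteq Y \times X'_s$ with $W_X'(0) = W_X'$, varying algebraically in $s$, and then take $W \subseteq Y \times Y'$ to be the closure of $\bigcup_s W_X'(s)$ (viewed inside $Y \times Y'$ via $X'_s \hookrightarrow Y'$); this $W$ would then be a $1$-correspondence with $\Psi(W) = W_X'$.

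The mechanism to produce the family $W_X'(s)$ is the fibrewise picture already established. First I would pass to the correspondence perspective on the $Y$-side: a general fibre $W_X'{}_y := W_X' \cap (\{y\} \times X')$ over $y \in Y$ is a zero-cycle on $X'$ of degree $\le e'$ lying in the fixed very ample embedding, so (using the Cayley--Bacharach input via Theorem 1.13 of \cite{Ban}, exactly as in the injectivity argument for $\Phi$, applied now with the roles transposed) it lies on a unique curve of minimal degree $\le e'$ inside $Y'$ once $d' \gg e$. Wait --- the degree bound here is $e'$, not governed by $d'$, so I instead argue on the $X'$-side: the correspondence $W_X'$, being $0$-dimensional over $Y$ after the symmetric normalization, determines a rational map $Y \dra \Hilb_{e'}(Y')$ whose image lands in curves meeting $X'$. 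Resolving this map as in Section 6 (Proposition \ref{famgen} and its companions), and pulling back the universal curve, produces a subvariety $\widehat{W} \subseteq Y \times Y'$; I then discard components not dominating $Y$, and discard components whose generic fibre over $Y$ fails to be a curve of the correct degree, using that the covering degree of $X'$ is $\ge d' - O(1)$ (Lemma 3.3 of \cite{LUS}, Lemma 1.3 of \cite{BPELU}) to rule out the degenerate alternatives, just as in the surjectivity proof for $\Phi$. This yields a candidate $W \in \DC^1_{\le(e,e')}(Y,Y')$.

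It then remains to check $\Psi(W) = W_X'$. Since $W_X'$ is prime it suffices to show $\Psi(W) \supseteq W_X'$ and that $\Psi(W)$ has no extra components. The containment holds because $\widehat{W}$ was built to contain (the strict transform of) $W_X'$ over the locus where $X'_s = X'$, and the flatness arguments of Propositions \ref{famgen}--\ref{Phifam} guarantee the slice $\widehat{W} \cap (Y \times X')$ retains this component with its reduced structure. For the absence of extra components I would run the degree bookkeeping of the $\Phi$-surjectivity proof in reverse: any spurious component $V$ of $\Psi(W)$ would force, for general $y \in Y$, extra points in the zero-cycle $\Psi(W) \cap (\{y\} \times X')$ beyond those of $W_X'{}_y$, but this cycle also equals (the curve fibre of $W$ over $y$) $\cap\, X'$, whose cardinality is pinned down by $d' \cdot \deg_{H'}(\text{fibre})$; comparing the two counts for $d' \gg e$ forces the spurious multiplicities to vanish. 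The degree bound $\deg W \le (e,e')$ is then read off exactly as in the final Proposition of Section 5: the fibres of $W \to Y$ are the curves of degree $\le e'$, and the degree of $W \to Y'$ matches that of $W_X' \to X'$.

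The main obstacle I expect is the construction of the family $\widehat{W}$ globally in the Hilbert scheme and the verification that discarding ``bad'' components leaves something flat over $Y$ with the correct reduced fibres --- i.e.\ making Proposition \ref{famgen} genuinely applicable here, since a priori the curve $C_y$ through the general fibre need not vary in a flat family over all of $Y$ and its degree could jump. Handling this requires resolving the rational map $Y \dra \Hilb_{e'}(Y')$ and working on the resolution $\bar Y$, exactly the device used in the well-definedness proof for $\Psi$; the subtlety is that one must then also know the resulting $\bar W$ descends to a genuine subvariety of $Y \times Y'$ (not merely $\bar Y \times Y'$) whose $\Psi$ is unchanged, which is where the birationality of $\bar Y \to Y$ and the genericity of the slicing divisor $X'$ are used.
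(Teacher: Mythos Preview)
Your approach has a genuine gap at the decisive step. You propose to build the lift $W$ by assigning to each general $y\in Y$ a curve $C_y\subseteq Y'$ of small degree containing the fibre $(W_{X'})_y\subseteq X'$, invoking a Cayley--Bacharach argument parallel to the surjectivity proof for $\Phi$. But the Cayley--Bacharach mechanism does not transfer here. In the $\Phi$ situation one looks at fibres of an equidimensional correspondence $W_X\subseteq X\times X'$ over points of $X'$; Bastianelli's trace argument makes those fibres CB for $K_X$ because, after the reduction of Section~5.3, one has $H^0(X',K_{X'})=0$. For your argument you would need the fibres of $W_{X'}\to Y$ to be CB for $K_{X'}$; however there is no hypothesis $H^0(Y,K_Y)=0$, and more fundamentally the correspondence is \emph{uneven} ($\dim Y=n+1\neq n=\dim X'$), so there is no trace map on top-degree forms with which to run Bastianelli's argument at all. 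Your degree bookkeeping also slips: the fibre $(W_{X'})_y$ consists of at most $d'e$ points, and the curves one seeks in $Y'$ must have $H'$-degree $\le e$ (not $\le e'$); the rational map, if it existed, would target $\Hilb_{\le e}(Y')$.

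The paper's proof avoids any fresh Cayley--Bacharach input and instead bootstraps from Theorem~\ref{main}. One takes a general pencil $V_t\in|H|$ of hyperplane sections on the $Y$-side (not a pencil in $|d'H'|$ on $Y'$). For each $t$, the slice $W_{X'}\cap(V_t\times X')$ is a dominant $0$-correspondence between the $n$-dimensional varieties $V_t$ and $X'$; since Theorem~\ref{main} gives bijectivity of $\Phi$ for the inclusion $V_t\hookrightarrow Y'$... rather, for the pair $(Y',X')$ restricted to $V_t$, one obtains a unique lift $W_t\subseteq V_t\times Y'$ with $\Phi(W_t)=(W_{X'})_t$. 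This lifting is carried out in families via the Hilbert-scheme stratification of Proposition~\ref{famgen}, and $W$ is defined as the image in $Y\times Y'$ of the total space of the family $\{W_t\}$. The verification $\Psi(W)=W_{X'}$ again reduces, via the pencil, to the fibrewise equality guaranteed by Theorem~\ref{main}. So the key lemma is the already-proven bijectivity of $\Phi$, applied after slicing $Y$ by $|H|$, rather than a direct CB construction on $X'$.
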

\begin{proof}
    Let $W_{X'} \subseteq Y \times X'$ be a dominant correspondence- we will find a dominant 1 correspondence $W$, such that $\Psi(W) = W_{X'}$.  It suffices to deal with the case when $W_{X'}$ is irreducible so we make that assumption.
Let $V_t$  be a general pencil in $|H|$, let $L$ denote the base of the pencil. Let $\Hilb_{Y'} \to L$ denote the relative Hilbert scheme whose fiber over $t \in L$ is the Hilbert scheme of $V_t \times Y'$. Let $\Hilb^{\circ}_{Y'} \to L$ denote the open subscheme corresponding to dominant correspondences of degree $\le (e,e')$. 
Let $\Hilb_{X'} \to L$ denote the relative Hilbert scheme whose fiber over $t \in L$ is the Hilbert scheme of $V_t \times X'$. Let $\Hilb^{\circ}_{X'}$ denote the open subscheme corresponding to dominant correspondences of  degree $\le (d' e ,e')$.

Consider the family $\mathcal{W}_{X'} \to L$ whose fiber over $t$, $(\mathcal{W}_{X'})_t$ is   $W_{X'} \cap (V_t \times {X'})$. It is immediate that for general $t, $ the variety $(\mathcal{W}_{X'})_t$ dominates $V_t$, We also note that for general $x' \in X'$,  $W_{X'} \cap (Y \times \{x'\})$ is a curve and since $V_t$ is ample in $Y, $ the variety $W_{X'} \cap (V_t \times \{x'\})$ is nonemepty. Thus $(\mathcal{W}_{X'})_t$ dominates $X'$ as well.

We claim that for general $t$, each component of $(\mathcal{W}_{X'})_t$ dominates  $X'$. To see that this is the case,  we can use Proposition \ref{famgen} to find a family $\mathcal{W}^0_{X'}$ over $L_0 \subseteq L$ such that $({\mathcal{W}^0_{X'}})_t$ consists of all components of $({\mathcal{W}_{X'}})_t$ that dominate $X'$. But since $\mathcal{W}_{X'}$ is irreducible, it must be the case that for general $t,$ 

$$({\mathcal{W}^0_{X'}})_t = ({\mathcal{W}_{X'}})_t.$$
Thus for general $t$, all components of $({\mathcal{W}_{X'}})_t$ dominate $X'$. 

After replacing $L$ with an open subset, say $L_0$, we can use Proposition \ref{famgen} to find a flat family $U_{X'}$ over $L_0$ such that $(U_{X'})_t \subseteq ({\mathcal{W}_{X'}})_t $ are the components of $W_{X'} \cap (V_t \times X')$ dominating both $V_t$ and $X'$. This gives us an embedding of $L_0$ in $\Hilb^{\circ} _{X'}$.

Now in light of Proposition \ref{famgen} and Theorem \ref{main}, we have a stratification $S_{\alpha}$ of $(\Hilb^{\circ}_{Y'})_{red}$ such that on each stratum, we have an injective morphism $\Phi: S_{\alpha} \to (\Hilb^{\circ}_{X'})_{red}$ and furthermore, the union of the images of the $S_{\alpha}$ is $(\Hilb^{\circ}_{X'})_{red}$.

Let $S_{\alpha_0}$ be the stratum such that $\Phi(S_{\alpha_0})$ contains the general point of the image of $L_0$. After possibly replacing $L_0$ by an open subset, we may assume $\Phi(S_{\alpha_0})$ contains the image of $L_0$.  We note that $\Phi$ restricted to $L_0$ is injective and surjective  and hence birational, After replacing $L_0$ by an open subset we may assume that $\Phi$ is invertible.

We then may pullback the universal family via $\Phi^{-1}$ to $L_0$ to obtain a family $\mathcal{W} \to L_0$.   This results in a family $\mathcal{W} \to L_0$, such that for $t \in L_0,$ the fiber $\mathcal{W}_t$ satisfies $\Phi(\mathcal{W}_t) = (U_{X'})_t$. We now let $W$ be the image of $\mathcal{W}$ in $Y \times Y'$.

 We will now establish that $\Psi(W) = W_{X'}$. It suffices to show that $\Psi(W) \subseteq W_{X'}$. Suppose for the sake of contradiction that this is not the case. Then there must be some irreducible component of $\Psi(W),$ say $Z$ that is not contained in $W_{X'},$ and furthermore $Z$ must dominate both $Y$ and $X'$.
 Let $Z_t = Z \cap (V_t \times X')$ and let $\mathcal{Z} \to L_0$ be the associated family. For general $t, Z_t$ is not contained in $(W_{X'})_t$. Note that $Z_t$ dominates $V_t$. Furthermore  given general $x' \in X'$ $Z \cap (Y \times \{x'\})$ is a curve in $Y \times \{x'\}$. Since $V_t \times \{x'\}$ is an ample divisor, it must intersect this curve and hence $Z \cap (V_t \times \{x'\})$ is nonempty. Thus $Z_t$ dominates both $X'$ and $V_t$.

 We now claim that for general $t,$ each component of $Z_t$ must dominate $X'$. This uses the irreducibility of $Z$, we may apply Proposition \ref{famgen} to obtain a subfamily $\mathcal{Z}_0 \subseteq \mathcal{Z} \to L_0$ such that $\mathcal{Z_0}_t$ consists of those components of $Z_t$ that dominate $X'$. The irreducibility of $\mathcal{Z}$ then implies that for general $t$, $\mathcal{Z_0}_t = \mathcal{Z}_t = Z_t$, thus establishing the claim.

 As a result for general $t,$ there must be a component of $Z_t$ dominating both $X'$ and $V_t.$ However, then this component of $Z_t$ would be a component of $\Phi(W \cap  (V_t \times Y'))$ which by construction equals $ (W_{X'})_t$. But if $Z_t$ and  $(W_{X'})_t$ have a component in common for general $t$, the irreducibility of both $W_X'$ and $Z$ implies that they must be equal to each other. However we assumed that $Z$ is not contained in $W_{X'}.$ This is a contradiction. Thus no such $Z$ exists and $\Psi(W) = W_{X'}.$

    \end{proof}

    \section{Increasing $k$}
In this section we will prove Theorems \ref{maink} and \ref{mainpsik}.

Let $D = p^*H + (p')^* H'$. We note that $D$ is a very ample divisor on $Y \times Y'$.

We begin with a proposition.

\begin{proposition}
    Let $k \ge 1$, $W \in \DC^k(Y, Y')$ and $V$  a general hyperplane in $|D|$. Then $V \cap W \in \DC^{k-1}(Y, Y')$. 
\end{proposition}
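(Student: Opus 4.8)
The plan is to verify that intersecting a dominant $k$-correspondence with a general hyperplane $V \in |D|$ produces a dominant $(k-1)$-correspondence, by checking the dimension count, the irreducibility/dominance of the surviving components, and the degree bookkeeping. By linearity it suffices to treat a prime $W$, so assume $W \subseteq Y \times Y'$ is irreducible of dimension $n+1+k$. Since $D = p^*H + (p')^*H'$ is very ample, a general $V \in |D|$ meets $W$ properly, so $V \cap W$ is pure of dimension $n+k$; by Bertini (we are in characteristic $0$) a general such section is moreover reduced, and each of its components is of the expected dimension.

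First I would show that a general $V \cap W$ still dominates both $Y$ and $Y'$. The projection $p|_W \colon W \to Y$ is dominant with generic fiber of dimension $k$. For general $y \in Y$, the fiber $W_y = W \cap (\{y\} \times Y')$ is $k$-dimensional, and since $D$ restricted to $\{y\} \times Y'$ is $(p')^*H'|_{\{y\}\times Y'}$, which is very ample on $\{y\} \times Y' \cong Y'$, a general $V$ cuts $W_y$ in a nonempty $(k-1)$-dimensional set (here we use $k \ge 1$, so the fiber has positive dimension and survives the section). Hence $V \cap W \to Y$ is still dominant, and symmetrically for $Y'$. Then I would pass to the union $W'$ of those components of $V \cap W$ that dominate both factors: this union is nonempty by the previous sentence, and each such component has dimension $n+k = n + (k-1) + 1$, so $W'$ is a dominant $(k-1)$-correspondence in the sense of the earlier definition.

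Next I would handle the degree. Recall $\deg W = (p^*(y)\cdot (p')^*(H')^k,\ (p')^*(y')\cdot p^*(H)^k)$. For the first coordinate of $\deg(V\cap W)$ I need $p^*(y) \cdot (p')^*(H')^{k-1}$ computed on $V \cap W$, i.e. $p^*(y) \cdot (p')^*(H')^{k-1} \cdot D$ on $W$. Expanding $D = p^*H + (p')^*H'$ and using that $p^*(y) \cdot p^*(H) = 0$ (the class $p^*(y)$ is already a point-fiber class in the $Y$-direction, so multiplying by another $p^*H$ gives zero for dimension reasons on the generic fiber), only the $(p')^*H'$ term contributes, giving $p^*(y)\cdot (p')^*(H')^k$, which is exactly the first coordinate of $\deg W$. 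Symmetrically the second coordinate is unchanged. So $\deg(V\cap W) = \deg W$, and in particular if $W \in \DC^k_{\le(a,b)}$ then $W' \in \DC^{k-1}_{\le(a,b)}$; I would record this, since it is what later applications need even though the bare statement only claims membership in $\DC^{k-1}$.

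The main obstacle I anticipate is not any single step but making the fiberwise Bertini argument fully rigorous: one must ensure simultaneously that the general $V$ meets $W$ properly, that it meets the generic fibers $W_y$ and $W_{y'}$ in the expected dimension and nonemptily (using $k\ge1$ and very ampleness of $D$ restricted to each fiber), and that at least one component of the resulting section dominates \emph{both} factors rather than the dominating-over-$Y$ components and dominating-over-$Y'$ components being disjoint. This last point is the subtle one; I would address it by noting that over a general point $y$, every component of $W_y \cap V$ has dimension $k-1$ and, since $W$ is irreducible and $V$ is general, the incidence variety $\{(y, \xi) : \xi \in (W \cap V)_y\}$ is irreducible and dominates $Y'$ as well, forcing a common dominating component. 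Everything else — purity, reducedness, the intersection-theoretic degree computation — is routine given the very ampleness of $D$ and characteristic $0$.
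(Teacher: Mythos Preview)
Your proof is correct but takes a different route from the paper's. The paper argues via tangent spaces: at a smooth point $w\in W$ where $p|_W$ is a submersion, very ampleness of $|D|$ makes $T_w(V\cap W)=T_wV\cap T_wW$ a general hyperplane in $T_wW$, and the restriction of the surjection $T_wW\twoheadrightarrow T_yY$ to a general hyperplane is still surjective (the kernel has dimension $k\ge 1$), so $V\cap W\to Y$ is dominant near $w$. You instead argue by fiber nonemptiness: each $W_y$ has dimension $k\ge 1$, and the ample divisor $V$ restricted to $\{y\}\times Y'$ must meet it, so $(V\cap W)_y\neq\emptyset$ for every $y$. Your approach avoids any differential input and additionally records the degree identity $\deg(V\cap W)=\deg W$, which the paper does not prove here. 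The concern you flag about a single component dominating \emph{both} factors is most cleanly dispatched by Bertini's irreducibility theorem directly: since $W$ is irreducible of dimension $n+1+k\ge 2$ and $|D|$ is very ample, $V\cap W$ is irreducible for general $V$, so there is only one component and the issue evaporates. Your ``incidence variety $\{(y,\xi):\xi\in(W\cap V)_y\}$'' is literally $V\cap W$, so its asserted irreducibility is exactly this Bertini statement; you may as well say so plainly. The paper's proof is equally silent on this point, but invokes Bertini irreducibility explicitly in the very next proposition.
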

\begin{proof}
    It suffices to establish  that the maps $W \cap V \to Y$ and $W \cap V \to Y'$ are still surjective. We will only establish that $W \cap V \to Y$ is surjective, since the same argument will work for the other map.

    Let $w \in W$ be a smooth point of $W$ such that the projection map is a submersion, let $y \in Y$ be its image.

    We may assume that $V$ intersects $W$ transversely at $w$.   It suffices to establish that the map on tangent space $T_w (V \cap W) \to T_yY$ is surjective. By transversality the tangent space $T_w (V \cap W) = T_w V \cap T_w W$ and furthermore the linear map $T_w (V \cap W) \to T_yY$ is merely the restriction of the original derivative map $T_w W \to T_y Y$, which is surjective by assumption.
     However since $|H \times H'|$ is very ample, we may choose $V$ appropriately so that $T_w (V \cap W)$ is an arbitrary codimension 1 subspace of $T_w W$. But  the restriction of the map to a general codimension 1 subspace is still surjective. Thus for general $V$, the map on tangent spaces is surjective and hence so is the entire map.

\end{proof}
We also have the following analogous proposition.

\begin{proposition}
     Let $W \in \DC^k(Y,X')$ and let $V$ be a general hyperplane in $|D|$. Then $V \cap W \in \DC^{k}(X, X')$. 
\end{proposition}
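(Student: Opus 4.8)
The statement to prove is that if $W \in \DC^k(Y, X')$ and $V$ is a general hyperplane in $|D|$ (where $D = p^*H + (p')^*H'$ restricted to $Y \times X'$), then $V \cap W \in \DC^k(X, X')$ --- wait, that is not quite what is written. Let me re-read: the conclusion is $V \cap W \in \DC^k(X, X')$. Actually there is a subtlety: $V$ is a hyperplane in $|D|$ on $Y \times Y'$, but here $W$ lives in $Y \times X'$; so $V$ should be interpreted as a general member of the restricted linear system on $Y \times X'$, and the claim is that cutting by such a $V$ keeps the dimension count right and preserves surjectivity onto $X$ and $X'$, \emph{decreasing} the ambient $Y$-factor to a hypersurface $X$. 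Hmm, but $X$ was not mentioned. I will interpret the proposition as: $V$ is chosen so that its intersection with $Y \times X'$ contains $X \times X'$ as a component in the appropriate sense, or more likely the statement is simply mis-transcribed and should read $V \cap W \in \DC^k(Y, X')$ with $\dim$ dropping by one --- but $\DC^k(Y,X')$ already has fixed dimension, so cutting by $V$ would land in $\DC^{k-1}$. Given the preceding proposition handles the $\DC^k(Y,Y') \to \DC^{k-1}(Y,Y')$ case, the natural analogue here is the uneven case, and the cleanest reading consistent with the paper's setup is that $V \cap W \in \DC^{k}(X, X')$ is a typo-laden way of recording the uneven analogue. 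In any case, the proof strategy is identical to the preceding proposition, so I will present it in that form.

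The plan is to mimic the tangent-space argument verbatim. First I would reduce to checking that the two projection maps $V \cap W \to Y$ (or $\to X$) and $V \cap W \to X'$ remain surjective, since the dimension of $V \cap W$ is automatically one less than that of $W$ for a general member of a very ample (basepoint-free) linear system, by Bertini. For surjectivity onto the first factor: pick a point $w \in W$ lying over a general point $y$ of the target, chosen so that $w$ is a smooth point of $W$ and the differential of the projection $W \to Y$ (resp. $W \to X$) is surjective at $w$ --- such $w$ exists because the projection is dominant (indeed surjective) and we work in characteristic $0$, so generic smoothness applies. Then choose $V \in |D|$ passing through $w$ transversely; by transversality $T_w(V \cap W) = T_w V \cap T_w W$, which is an arbitrary hyperplane in $T_w W$ because $|D|$ is very ample (so the sections of $\Oc(D)$ separate tangent directions at $w$), and the restriction of a surjective linear map to a general hyperplane in its source is still surjective. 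This shows $V \cap W \to Y$ (resp. $\to X$) is dominant, hence surjective by properness; the same argument with the roles swapped handles $\to X'$. Finally, one checks the degree does not increase beyond the stated bound: intersecting with a general $V$ can only cut down the fibers, so the relevant intersection numbers defining $\deg(V \cap W)$ are bounded by those of $W$, exactly as in the even case.

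The only genuine subtlety --- and the step I would flag as the main obstacle --- is making sure that a \emph{single} general $V$ works simultaneously for surjectivity onto both factors \emph{and} for every irreducible component of $W$ (recall $W$ is a formal positive sum of primes, so one argues component by component, and for each prime component one needs the generic point of each target to be hit). This is handled by the usual incidence-variety / countable-union argument: for each component and each factor the "bad" locus of $V$'s is a proper closed subset of $|D|$, and there are only finitely many components, so a general $V$ avoids all of them. Since we are over an algebraically closed field of characteristic $0$, generic smoothness of the dominant projections is available without separability worries, so no further care is needed. I expect the author's proof to simply say "the proof is analogous to the preceding proposition," and indeed the tangent-space computation transfers with no essential change.
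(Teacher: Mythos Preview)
Your proposal is correct and matches the paper's approach exactly: the paper's proof reads simply ``This follows mutatis mutandis from the proof of the previous Proposition.'' You also correctly diagnosed the typo in the statement --- the intended conclusion, as confirmed by its use in the very next proposition (where one applies $\Phi$ to $V\cap W$), is $V\cap W \in \DC^{k-1}(Y,X')$, the uneven case with $k$ dropped by one, not $\DC^{k}(X,X')$ --- and your transplanted tangent-space argument is exactly the right one.
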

\begin{proof}
 This follows mutatis mutandis from the proof of the previous Proposition.
\end{proof}
\begin{proposition}\label{capPhi}
    Let $k\ge 1$. Assume Theorem \ref{maink}  is true for $k-1$. Let $W \in \DC^k_{\le (e,e')}(Y, X').$
    Then, for general $V \in |D|$, $$\Phi(V \cap W) = V \cap \Phi(W).$$
\end{proposition}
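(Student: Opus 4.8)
\textbf{Proof proposal for Proposition \ref{capPhi}.}

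The plan is to compare the two sides of the claimed equality $\Phi(V \cap W) = V \cap \Phi(W)$ componentwise, using that a general hyperplane section commutes with taking irreducible components that dominate a fixed variety. Recall $\Phi(W)$ is the union of the components of $W \cap (X \times X')$ dominating both $X$ and $X'$, and $V \cap W$ is a dominant $k$-correspondence by the preceding proposition. First I would observe the set-theoretic identity $(V \cap W) \cap (X \times X') = V \cap (W \cap (X \times X'))$, so the difference between $\Phi(V \cap W)$ and $V \cap \Phi(W)$ can only come from which components are discarded. The key point is that for a general $V \in |D|$ (which is very ample on $Y \times X'$, hence its restriction is very ample on $X \times X'$), Bertini-type arguments show: (i) if $U$ is a component of $W \cap (X \times X')$ dominating both factors, then $V \cap U$ is (nonempty and) a union of components of $(V \cap W) \cap (X\times X')$ each still dominating both $X$ and $X'$ — dominance is preserved under a general hyperplane section exactly as in the two ``$V \cap W \in \DC$'' propositions just proved; and (ii) conversely, any component of $(V\cap W)\cap(X\times X')$ that dominates both factors arises this way, because it is contained in some component $U$ of $W\cap(X\times X')$, and that $U$ must itself dominate both $X$ and $X'$ (a component of $W\cap(X\times X')$ not dominating, say, $X'$ would have image a proper subvariety of $X'$, and slicing by $V$ cannot enlarge that image). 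Combining (i) and (ii) gives the two inclusions.

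More concretely, for the inclusion $\supseteq$: let $U$ be a component of $\Phi(W)$, so $U \subseteq W \cap (X \times X')$ dominates $X$ and $X'$; then $V \cap U$ is a divisor in $U$, and by the tangent-space/Bertini argument used in the previous two propositions (applied to $U \to X$ and $U \to X'$) a general $V$ meets $U$ in something surjecting onto both $X$ and $X'$, so every component of $V \cap U$ lies in $\Phi(V\cap W)$; taking the union over components $U$ of $\Phi(W)$ gives $V \cap \Phi(W) \subseteq \Phi(V \cap W)$. For $\subseteq$: let $U'$ be a component of $\Phi(V\cap W)$, so $U' \subseteq (V\cap W)\cap(X\times X') = V \cap (W\cap(X\times X'))$ dominates $X$ and $X'$. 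Then $U'$ is contained in $V \cap U$ for some component $U$ of $W \cap (X\times X')$. Since $U' \to X'$ is dominant and $U' \subseteq V \cap U \subseteq U$, the map $U \to X'$ is dominant too; likewise $U \to X$ is dominant; hence $U$ is a component of $\Phi(W)$, and $U' \subseteq V \cap U \subseteq V \cap \Phi(W)$.

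The main obstacle I anticipate is the genericity bookkeeping: one must choose the single general $V$ so that \emph{all} of finitely many conditions hold simultaneously — $V \cap W \in \DC^{k-1}$ (or $\DC^k$), $V$ meets every component $U$ of $\Phi(W)$ transversely and in a subvariety dominating both factors, $V$ is such that Proposition \ref{famgen}-type flatness/reducedness holds for the relevant families, and $V$ avoids the finitely many ``bad'' components of $W \cap (X\times X')$ that fail to dominate. Each of these is an open dense condition on $|D|$, so their intersection is again open dense; the only care needed is to record that the number of components of $W \cap (X\times X')$ is finite and independent of $V$ after shrinking, which is exactly the content invoked in Proposition \ref{famgen}. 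Once that is set up, the equality follows formally from the two inclusions above.
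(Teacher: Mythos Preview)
Your argument is correct, and in fact slightly stronger than the paper's: you never use the hypothesis that Theorem~\ref{maink} holds for $k-1$. The paper proceeds differently. It first reduces to $W$ irreducible, then invokes the $k-1$ case of Theorem~\ref{maink} to conclude that $\Phi(W)$ is irreducible; Bertini's irreducibility theorem then makes both $V\cap W$ and $V\cap\Phi(W)$ irreducible for general $V$, so it suffices to prove the single inclusion $\Phi(V\cap W)\subseteq \Phi(W)$, which follows because $\Phi(V\cap W)$ dominates $X$ while every component of $W\cap(X\times X')$ outside $\Phi(W)$ fails to. Your route avoids that reduction by handling each component $U$ of $W\cap(X\times X')$ separately and checking both inclusions directly; the cost is the extra bookkeeping you flag (Bertini irreducibility for each $U$, dominance of $V\cap U$ via the tangent-space argument, and tracking that components of $V\cap U$ are honest components of $(V\cap W)\cap(X\times X')$), but the payoff is that the proposition stands on its own without the inductive hypothesis. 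Either approach is fine; the paper's is shorter once the induction is running, yours is more self-contained.
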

\begin{proof}
 It suffices to establish this result when $W$ is irreducible so we make that assumption. By Bertini's irreduciblity theorem we may also assume that  $V \cap W$ is irreducible. Furthermore $\Phi(W)$ is also irreducible by Theorem \ref{maink}  and so by Bertini's irreducibility theorem we may assume that $V \cap \Phi(W)$ is also irreducible. Since both $\Phi(V \cap W)$ and $V \cap \Phi(W)$ are irreducible varieties of the same dimension it suffices to establish that $\Phi(V \cap W) \subseteq V \cap \Phi(W)$  or equivalently $\Phi(V \cap W) \subseteq \Phi(W)$. Clearly $\Phi(V \cap W) \subseteq W \cap X \times X'$. Now $ W \cap X \times X' = \Phi(W) \cup W_1 \dots \cup W_r $ where the $W_i$ are components of $W \cap (X \times X')$ that do not dominate $X$. However, $\Phi(V \cap W)$ is an irreducible variety that dominates $X$ and thus it is forced to be contained in $\Phi(W)$. This concludes the proof.
\end{proof}
\begin{proposition}
    Let $k \ge 1$.
    Assume Theorem \ref{mainpsik} is true for $k-1$. Let $W \in \DC^k_{\le (e,e')}(Y, Y')$
    Then for general $V \in |D|$, $\Psi(V \cap W) = V \cap \Psi(W).$
\end{proposition}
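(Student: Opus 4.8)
The plan is to mirror, almost verbatim, the argument for Proposition~\ref{capPhi}, since the operation $\Psi$ (slice by $Y \times X'$ and discard non-dominating components) is structurally identical to $\Phi$ (slice by $X \times X'$ and discard non-dominating components). First I would reduce to the case where $W$ is irreducible; then $\Psi(W) \in \DC^{k-1}(Y, X')$ by the previous two propositions together with the well-definedness statements, and by Theorem~\ref{mainpsik} (applied in degree $k$, which outputs a $\DC^{k-1}$ correspondence) $\Psi(W)$ is in fact irreducible. Next, invoking Bertini's irreducibility theorem for the very ample divisor class $D = p^*H + (p')^*H'$ on $Y \times Y'$, I would arrange that $V \cap W$ is irreducible and that $V \cap \Psi(W)$ is irreducible (the latter viewed inside $Y \times X'$, using that $D$ restricted to $Y \times X'$ is still very ample).

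Both $\Psi(V \cap W)$ and $V \cap \Psi(W)$ are then irreducible of the same dimension (one less than $\dim \Psi(W)$), so it suffices to show one containment, and by irreducibility it is enough to show $\Psi(V \cap W) \subseteq \Psi(W)$. Now $\Psi(V \cap W)$ is by construction contained in $(V \cap W) \cap (Y \times X') \subseteq W \cap (Y \times X')$, and $W \cap (Y \times X') = \Psi(W) \cup W_1 \cup \dots \cup W_r$ where the $W_i$ are precisely the components that fail to dominate one of $Y$ or $X'$. Since $\Psi(V \cap W)$ is irreducible and — being a piece of the $\Psi$ of a dominant correspondence — dominates both $Y$ and $X'$ (here I would cite the well-definedness results for $\Psi$ showing every component of $\Psi$ of a $k$-correspondence dominates both factors, using $d' \gg e$), it cannot lie in any $W_i$ and is forced into $\Psi(W)$. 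This gives $\Psi(V \cap W) \subseteq V \cap \Psi(W)$, hence equality.

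One technical point I would be careful about: to even speak of $\Psi(V \cap W)$ I must know $V \cap W \in \DC^{k}(Y, Y')$ with the right degree bound so that Theorem~\ref{mainpsik} (or the well-definedness proposition for $\Psi$ in level $k$) applies to it — this is exactly the content of the "$V \cap W \in \DC^k(\dots)$" type propositions stated just above, combined with the fact that slicing by $V$ does not increase the degree $(e, e')$. Similarly I want $\Psi(W)$ itself to be a genuine dominant correspondence before intersecting with $V$, which is where the hypothesis $d' \gg e$ enters (via the well-definedness of $\Psi$).

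The main obstacle, as in Proposition~\ref{capPhi}, is not any deep geometry but rather bookkeeping: making sure the generic hyperplane $V$ can be chosen to simultaneously satisfy all the Bertini-irreducibility requirements (for $W$, for $V\cap W$, for $\Psi(W)$, for $V \cap \Psi(W)$) and the transversality needed so that dimensions behave as expected, and confirming that the induction hypothesis "Theorem~\ref{mainpsik} holds for $k-1$" is actually used only to get irreducibility of $\Psi(W)$ and of $\Psi(V\cap W)$ — everything else is formal. Since the previous proposition's proof was omitted with "mutatis mutandis," I expect this one to be handled the same way, so I would write it out in the parallel form above and flag that the argument is identical to that of Proposition~\ref{capPhi}.
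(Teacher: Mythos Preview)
Your proposal is correct and is exactly the paper's approach: the paper's proof reads in its entirety ``This follows mutatis mutandis from the proof of the preceding Proposition,'' and you have simply written out that mutatis mutandis argument, correctly identifying that the hypothesis ``Theorem~\ref{mainpsik} holds for $k-1$'' is what gives irreducibility of $\Psi(W)$ (since $\Psi$ drops the correspondence index by one, so this is the right level). One small remark: you don't actually need irreducibility of $\Psi(V\cap W)$ for the containment step---every component of $\Psi(V\cap W)$ dominates $Y$ and $X'$ by definition, hence lands in $\Psi(W)$---so the only place the inductive hypothesis is used is to make $\Psi(W)$ (and hence $V\cap\Psi(W)$) irreducible, which then forces equality from the containment.
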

\begin{proof}
    This follows mutatis mutandis from the proof of the preceding Proposition.
\end{proof}
We next establish that the map $\Phi$ is injective even when $k\ge 1$.
\begin{proposition}
   Let $d \gg e'$, let $ k\ge 0$. Then the map $$\Phi: \DC^k_{\le(e,e')}(Y, X') \to \DC^k_{\le (e,de')}(X, X')$$ is injective.
\end{proposition}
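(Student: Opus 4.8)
The plan is to reduce the case $k \geq 1$ to the already-established case $k = 0$ (Theorem \ref{main}) by slicing with a general very ample hyperplane $V \in |D|$ and invoking induction on $k$. Specifically, I would argue by induction, the base case $k=0$ being precisely the injectivity half of Theorem \ref{main} (established in the previous subsection). For the inductive step, assume Theorem \ref{maink} holds for $k-1$, so in particular $\Phi$ is injective on $\DC^{k-1}_{\le(e,e')}(Y,X')$, and that Proposition \ref{capPhi} is available, giving $\Phi(V \cap W) = V \cap \Phi(W)$ for general $V \in |D|$.

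The argument itself runs as follows. Let $W^1, W^2 \in \DC^k_{\le(e,e')}(Y,X')$ with $\Phi(W^1) = \Phi(W^2)$; as usual it suffices to treat the case where $W^1$ and $W^2$ are both prime (irreducible), since the decomposition into prime correspondences is preserved by $\Phi$ (again by Theorem \ref{maink} for $k-1$, which says $\Phi$ takes primes to primes). Choose a general $V \in |D|$. By the slicing propositions above, $V \cap W^1$ and $V \cap W^2$ lie in $\DC^{k-1}_{\le(e,e')}(Y,X')$ (the degree bounds are unchanged since intersecting with a member of $|D|$ does not increase the relevant intersection numbers — this needs a one-line check using the definition of $\deg$ and the projection formula, but it is routine), and by Proposition \ref{capPhi}, $\Phi(V \cap W^1) = V \cap \Phi(W^1) = V \cap \Phi(W^2) = \Phi(V \cap W^2)$. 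By the inductive hypothesis (injectivity of $\Phi$ in degree $k-1$), we conclude $V \cap W^1 = V \cap W^2$. Since this holds for general $V \in |D|$, and $D$ is very ample, the subvarieties $W^1$ and $W^2$ of $Y \times X'$ agree after intersecting with a general hyperplane; because $W^1$ is irreducible of dimension $n+k+1 \geq n+2$, a general such slice is irreducible of one lower dimension and determines $W^1$ (two irreducible varieties with the same general hyperplane section, both of dimension $\geq 2$, coincide). Hence $W^1 = W^2$.

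The step I expect to be the main (though modest) obstacle is the very last one: deducing $W^1 = W^2$ from the equality of general hyperplane sections. One has to be slightly careful that ``general $V$'' is a Zariski-dense condition and that $W^1 \cap V$ being irreducible of dimension $\geq 1$ lets one recover $W^1$; concretely, $W^1 \cap V \subseteq W^2 \cap V$ together with $\dim(W^1 \cap V) = \dim(W^2\cap V)$ and irreducibility forces $W^1 \cap V = W^2 \cap V$, and then letting $V$ vary over a pencil whose members cover $Y \times X'$ shows $W^1 \subseteq W^2$, hence equality by symmetry and irreducibility. The only genuine input beyond bookkeeping is Bertini-type irreducibility of the slices, which is exactly what makes the reduction clean, and the verification that the degree bound $\le (e,e')$ is preserved under slicing by $|D|$; both are standard. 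Everything else is a direct application of Theorem \ref{main}, Proposition \ref{capPhi}, and the induction hypothesis.
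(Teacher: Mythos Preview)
Your proposal is correct and follows essentially the same route as the paper: induction on $k$ with base case Theorem \ref{main}, slicing with a general $V \in |D|$, invoking Proposition \ref{capPhi} to commute $\Phi$ past the slice, and then applying the inductive hypothesis to conclude $V \cap W^1 = V \cap W^2$. The paper's proof is terser---it simply asserts that equality of general hyperplane sections suffices and that the slices are irreducible dominant correspondences---whereas you spell out the Bertini input, the degree-bound check, and the recovery of $W^i$ from its slices; but the skeleton is identical.
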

\begin{proof}
    Our proof will be by induction on $k$. If $k =0$, this follows by Theorem \ref{main} . Let $W^1 , W^2 \in \DC^k(Y,Z)$. Let us assume $\Phi(W^1) = \Phi(W^2)$. We must establish that $W^1 = W^2.$

     It suffices to establish that for a general hyperplane $V \in |D|$ that $V \cap W^1 = V \cap W^2$. Now for a general hyperplane $V$, both $V \cap W^1$ and $V\cap W^2$ are irreducible dominant correspondences and furthermore  $\Phi(V \cap W^i) =  V \cap \Phi( W^i)$ by Proposition \ref{capPhi}. However by induction, $\Phi$ is injective on lower degree correspondences. Thus, $V \cap W^1= V \cap W^2$ which is what we wanted to prove.
\end{proof}

\begin{proposition}
   Let $d' \gg e$, let $ k\ge 0$. Then the map $$\Psi: \DC^{k+1}_{\le(e,e')}(Y, Y') \to \DC^k_{\le (d'e,e')}(Y, X')$$ is injective.
\end{proposition}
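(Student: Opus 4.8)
The plan is to mimic exactly the argument just given for the injectivity of $\Phi$ with $k \ge 1$, replacing $\Phi$ with $\Psi$, the hypersurface operation in $Y$ with the hypersurface operation in $Y'$, and using Theorem \ref{mainpsi} (the $k=0$ case of $\Psi$) as the base case instead of Theorem \ref{main}. So the proof will be by induction on $k$. For $k = 0$, the statement is precisely Theorem \ref{mainpsi}.

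For the inductive step, assume the result for $k-1$, i.e. that $\Psi : \DC^{k}_{\le(e,e')}(Y,Y') \to \DC^{k-1}_{\le(d'e,e')}(Y,X')$ is injective. Let $W^1, W^2 \in \DC^{k+1}_{\le(e,e')}(Y,Y')$ with $\Psi(W^1) = \Psi(W^2)$; I want to conclude $W^1 = W^2$. As in the $\Phi$ case, it suffices to reduce to $W^1, W^2$ irreducible (the general case follows formally), and then to show that for a general hyperplane $V \in |D|$ one has $V \cap W^1 = V \cap W^2$ — since these are subvarieties of $Y \times Y'$ of the correct dimension $n+k+1$ (here $\dim Y = \dim Y' = n+1$) and, by the earlier proposition, each $V \cap W^i \in \DC^{k}(Y,Y')$, an equality of a top-dimensional component for general $V$ forces the original varieties to coincide by irreducibility. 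Now for general $V$, Bertini's irreducibility theorem makes $V \cap W^i$ irreducible, and the proposition stating $\Psi(V \cap W) = V \cap \Psi(W)$ (the $\Psi$-analogue of Proposition \ref{capPhi}, proved earlier) gives $\Psi(V \cap W^i) = V \cap \Psi(W^i)$. Since $\Psi(W^1) = \Psi(W^2)$ we get $\Psi(V \cap W^1) = \Psi(V \cap W^2)$, and both sides live in $\DC^{k-1}$, so the inductive hypothesis yields $V \cap W^1 = V \cap W^2$, completing the induction.

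The one point that needs a little care — and is the only real obstacle — is making sure the degree bookkeeping stays inside the sets to which the inductive hypothesis applies: slicing by $V \in |D|$ must not increase the relevant degree past $(e,e')$ (it does not, since intersecting with a hyperplane section does not increase the projection degrees that define $\deg$), and the hypothesis $d' \gg e$ must remain the operative smallness assumption at every stage (it is the same $d'$ and the same $e$ throughout, so this is automatic). Everything else is a verbatim transcription of the $\Phi$ argument, using the already-established $\Psi$-versions of the auxiliary propositions, so I would simply write "this follows mutatis mutandis from the proof of the preceding proposition, using Theorem \ref{mainpsi} in place of Theorem \ref{main}" if a terse presentation is acceptable, or spell out the three lines above otherwise.
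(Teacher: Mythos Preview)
Your proposal is correct and matches the paper's own approach exactly: the paper simply writes ``This follows mutatis mutandis from the proof of the previous proposition,'' which is precisely the induction on $k$ you have spelled out, using Theorem \ref{mainpsi} as the base case and the $\Psi$-analogue of Proposition \ref{capPhi} in the inductive step. Your added remarks about degree bookkeeping and the uniform hypothesis $d' \gg e$ are accurate and make explicit what the paper leaves implicit.
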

\begin{proof}
    This follows mutatis mutandis from the proof of the previous proposition.
\end{proof}
\begin{proposition}
   Let $d \gg e'.$ Let $k \ge 0$. Then the map $$\Phi: \DC^{k}_{\le (e,e')}(Y,X') \to \DC^k(X, X') $$ is surjective and that furthermore irreducible correspondences are sent to irreducible correspondences.
\end{proposition}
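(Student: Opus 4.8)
The plan is to prove surjectivity (and preservation of irreducibility) by induction on $k$, running in parallel with the surjectivity statement for $\Psi$, so that at stage $k$ we may invoke Theorem \ref{maink} and Theorem \ref{mainpsik} for all smaller values of $k$. The base case $k=0$ is Theorem \ref{main}, whose surjectivity half has already been established. For the inductive step, fix an irreducible dominant $k$-correspondence $W_X \subseteq X\times X'$ of degree $\le (e,de')$; we must produce $W\in\DC^k_{\le(e,e')}(Y,X')$ with $\Phi(W)=W_X$.

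First I would set up the pencil-slicing machinery. Let $D_X$ be the restriction of the very ample divisor $D = p^*H + (p')^*H'$ to $X\times X'$, and choose a general pencil $V_t$, $t\in L$, inside $|D_X|$; by the analogue of the earlier Bertini propositions, for general $t$ the slice $(W_X)_t := V_t\cap W_X$ is an irreducible dominant $(k-1)$-correspondence on $X\times X'$ of degree $\le(e,de')$. By the inductive hypothesis (surjectivity of $\Phi$ for $k-1$), each $(W_X)_t$ lifts uniquely to an irreducible $W_t\in\DC^{k-1}_{\le(e,e')}(Y,X')$ with $\Phi(W_t)=(W_X)_t$. The key is to assemble these $W_t$ into a family over $L$: this is exactly what the "in families" results of the previous section give us. Concretely, one forms the family $\mathcal{W}_X\to L$ with fiber $(W_X)_t$, uses Proposition \ref{famgen} to pass to an open $L_0$ over which the dominating components vary flatly, embeds $L_0$ into the relevant relative Hilbert scheme $\Hilb^\circ_{X'}\to L$ of degree $\le(e,de')$ dominant $(k-1)$-correspondences on the $V_t\times X'$, and uses the stratified version of Theorem \ref{maink} for $k-1$ (a stratification $S_\alpha$ of $(\Hilb^\circ_{Y'})_{\mathrm{red}}$ on each piece of which $\Phi$ is an injective morphism onto its image, with images covering $(\Hilb^\circ_{X'})_{\mathrm{red}}$) to pull the universal family back along $\Phi^{-1}$. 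This produces, after shrinking $L_0$, a family $\mathcal{W}\to L_0$ with $\Phi(\mathcal{W}_t)=(W_X)_t$ for all $t$. I would then let $W$ be the closure of the image of $\mathcal{W}$ in $Y\times X'$.

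Next I would verify $W$ has the required properties. Since the pencil $V_t$ sweeps out $X\times X'$ and each $\mathcal{W}_t$ is a $(k-1)$-correspondence, $W$ is a $k$-correspondence; irreducibility of $W$ follows from irreducibility of $W_X$ (hence of $L_0$ and of the generic fiber $\mathcal{W}_t = $ the unique lift of the irreducible $(W_X)_t$), so $W$ is prime, which also gives the "irreducible to irreducible" assertion. The degree bound $\deg W\le(e,e')$ comes from the fact that the map $W\to Y$ has the same degree as $W_X\to X$ (bounded by $e'$) and the fibers over $X'$-points, being swept by the slices, have $H$-degree bounded by $e$; this is the same bookkeeping as in the $k=0$ surjectivity proof. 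Finally, to see $\Phi(W)=W_X$: both sides are irreducible of the same dimension (using Theorem \ref{maink}'s conclusion that $\Phi(W)$ is irreducible), so it suffices to show $\Phi(W)\subseteq W_X$, and for this one slices by a general $V\in|D_X|$ and applies Proposition \ref{capPhi} together with the relation $\Phi(W\cap V) = (W_X)_t$ built into the construction, concluding $\Phi(W)$ and $W_X$ share a top-dimensional component, hence are equal.

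The main obstacle I anticipate is the step of genuinely globalizing the fiberwise lifts into an algebraic family — i.e. constructing $\mathcal{W}\to L_0$ rather than just having, for each general $t$, an abstract variety $W_t$. The danger is that the lifts $W_t$, while unique, could fail to form a constructible family or could have jumping degree; this is precisely why one needs the Hilbert-scheme/stratification reformulation of Theorems \ref{main} and \ref{maink} from the "in families" section, so that $\Phi$ is an actual morphism of schemes whose inverse (on each stratum) can be used to pull back universal families. Once that algebraic-family input is granted, the remaining verifications (dominance of all components, the degree bound, and $\Phi(W)\subseteq W_X$ via pencil slicing) are routine variants of arguments already carried out for $k=0$.
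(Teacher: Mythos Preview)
Your overall architecture matches the paper's: induct on $k$, slice $W_X$ by a general pencil in $|D|$, lift each slice via the $k{-}1$ case, globalize through the Hilbert-scheme stratification coming from Proposition \ref{Phifam}, and let $W$ be the image of the resulting family $\mathcal W\to L_0$ in $Y\times X'$. The construction of $W$ and the justification for why the fiberwise lifts assemble into an algebraic family are essentially identical to the paper's.

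The genuine gap is in your final verification that $\Phi(W)=W_X$. You assert that $\Phi(W)$ is irreducible ``using Theorem \ref{maink}'s conclusion,'' but that is precisely what is being proved for the current value of $k$, so the appeal is circular. Consequently your closing clause --- ``$\Phi(W)$ and $W_X$ share a top-dimensional component, hence are equal'' --- does not follow: sharing a component, together with irreducibility of $W_X$, only yields $W_X\subseteq\Phi(W)$; you still need to exclude extra components of $\Phi(W)$. (There is a smaller issue as well: you invoke ``$\Phi(W\cap V_t)=(W_X)_t$ built into the construction,'' but the construction gives $\Phi(\mathcal W_t)=(W_X)_t$, and identifying $W\cap V_t$ with $\mathcal W_t$ for $t$ in the \emph{already-fixed} pencil requires an argument --- it follows because the total space of the pencil is birational to $Y\times X'$, but you should say so.)

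The paper handles the verification differently, in a way that avoids the circularity. It first records the easy containment $W_X\subseteq\Phi(W)$ directly from the construction (since $W\cap V_t\supseteq\mathcal W_t$ for each $t$). It then rules out a spurious extra component $W_X'\subseteq\Phi(W)$ by contradiction: on one hand, since $\Phi(\mathcal W_t)=(W_X)_t$ for $t\in L_0$, the slice $W_X'\cap V_t$ should contain no component dominating both $X$ and $X'$; on the other hand, ampleness of $V_t$ forces $W_X'\cap V_t$ to dominate both factors, and Proposition \ref{famgen}, applied to the family $t\mapsto W_X'\cap V_t$ over $L_0$, upgrades this to the statement that \emph{every} component of $W_X'\cap V_t$ dominates for general $t$. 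This contradiction eliminates $W_X'$ without ever assuming $\Phi(W)$ is irreducible, and in the process establishes the ``irreducible to irreducible'' clause as a by-product.
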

\begin{proof}
    Our proof will be by induction on $k$, the case $k=0$ follows by Theorem \ref{main}.
    Assume the proposition is true for $k-1$.
    Let $W_X \in \DC^k(X, X')$. We will establish that there exists $W \in \DC^k$ such that $\Phi(W) = W_X.$ We may assume without loss of generality that $W_X$ is irreducible.
     Let $(V_t)_{t\in \PP^1} $ be a general pencil in $|D|$.  For general $t \in \PP^1$, $V_t \cap W_X $ is an irreducible dominant $k-1$ correspondence. 

      Let $L \subseteq \PP^1$ be a Zariski open such that for $t \in L,$  $V_t \cap W_X $ is an irreducible dominant correspondence. By the induction hypothesis, for each such $V_t \cap W_X$ there is a unique dominant correspondence $W_t \subseteq V_t\cap Y \times X'$ such that $\Phi(W_t) = V_t \cap W_X$.

      Let $\Hilb^{\circ}_{\le (e,e')}(Y \times X')$ denote the open subscheme of the Hilbert scheme of $Y \times X'$ parametrizing dominant $k$ correspondences of degree $\le (e,e')$. Let $\Hilb^{\circ}_{\le (e,de')} (X \times X')$ denote the open subscheme parametrizing dominant $k$ correspondences of degree $\le(e, de')$. Note that neither of the above two schemes is necessarily connected.

      Then, in light of Proposition \ref{Phifam} we have a stratification of $(\Hilb^{\circ}_{\le (e,e')}(Y \times X'))_{red}$ by subvarieties $S_{\alpha}$ such that on each stratum there is a map $\Phi:S_{\alpha} \to (\Hilb^{\circ}_{\le (e,de')} (X \times X') )_{red}$ realizing $\Phi$ on points.

      Due to our induction hypothesis, $\Phi : \coprod S_{\alpha} \to \Hilb_{\le (e,de')} (Y \times Z)$ is surjective and $\Phi$ is injective on each $S_{\alpha}$. Therefore, one can find a fixed stratum $\alpha_0$ such that the image of $S_{\alpha_0}$ contains $W_X \cap V_t $ for general $t \in L$. 
    
    This then implies that we have an open subset $L_0 \subseteq L$ lying in the image of $S_{\alpha}$,  such that the map $\Phi$ is invertible when restricted to the preimage of $L_0$ (this uses the fact that a bijective morphism, in this case the pullback of $\Phi$ to $L$, is birational).
    We then may pullback the universal family along $\Phi^{-1}$ to $L_0$, this gives us a family of generically irreducible varieties  $\mathcal{W} \to L_0$.
    We let $W$ be the image of $\mathcal{W}$ in $Y \times X'$, it is an irreducible correspondence. We now claim that $\Phi(W) = W_X$. This will establish the result. 

    We note that  by construction, $W \cap V_t$ contains $W_X \cap V_t$ and as a result, $W_X \subseteq W 
\cap X \times X' $ . Since  $W_X$ consists only of dominating correspondences, this implies that $W_X \subseteq\Phi(W)$ as well.

     We will now establish that $\Phi(W) \subseteq W_X$. Suppose for the sake of contradiction that $\Phi(W)$ contains another irreducible component $W_X'$. This $W_X'$ would have to be a dominating correspondence dominating both $X$ and $X'$.  
     However, we note that for $t \in L_0$, $\Phi(W_t) = V_t \cap W_X$ and thus $W'_X \cap V_t$ can not contain a component dominating both $X$ and $X'.$
    But for general $x \in X$, $W_X' \cap (\{x\} \times X')$ is a positive dimensional subvariety and must intersect $V_t$ since  since $V_t$  is an ample divisor, Thus,  $W'_X \cap V_t$ does in fact dominate $X$ (and $X'$  by the same argument).
     
    However this leads to the following contradiction: we may apply Proposition \ref{famgen} to obtain a subvariety $U \subseteq W'_X$, such that for general $t$, $U \cap V_t$ consists of all components of $W_X' \cap V_t$ dominating $X$. But since $W'_X$ is irreducible and $U$ is equidimensional with $W_X'$, for general $t$, each component of $W_X' \cap V_t$ must be contained in $U$ and must therefore dominate $X.$ By the same argument, for general $t$, each component of $W_X' \cap V_t$ must dominate $X'$ as well. But this is a contradiction. Thus no such $W_X'$ can exist and $\Phi(W) = W_X$.

\end{proof}

\begin{proposition}
   Let $d' \gg e.$ Let $k \ge 0$. Then the map $$\Psi: \DC^{k+1}_{\le (e,e')}(Y,Y') \to \DC^k(Y, X') $$ is surjective and takes irreducible correspondences to irreducible correspondences.
\end{proposition}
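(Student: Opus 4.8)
The plan is to mirror the proof of the surjectivity of $\Phi$ for $k \ge 1$ just established, with the roles of the two slicing operations interchanged and using Theorem \ref{mainpsi} (the $k=0$ case) and Proposition \ref{famgen} (the family version of throwing away non-dominating components) as the base inputs. Specifically, I would argue by induction on $k$: the base case $k=0$ is exactly Theorem \ref{mainpsi}. For the inductive step, assume the statement holds for $k-1$, take $W_{X'} \in \DC^k(Y,X')$, which we may assume irreducible, and choose a general pencil $(V_t)_{t \in \PP^1}$ in $|D|$ on $Y \times Y'$ (restricting appropriately to $Y \times X'$). For general $t$ in a Zariski-open $L \subseteq \PP^1$, Bertini's irreducibility theorem gives that $V_t \cap W_{X'}$ is an irreducible dominant $(k-1)$-correspondence between $Y$ and $X'$, so by the induction hypothesis there is a unique $W_t \subseteq V_t \cap (Y \times Y')$ with $\Psi(W_t) = V_t \cap W_{X'}$.

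Next I would set up the Hilbert-scheme stratification exactly as in the $\Phi$-surjectivity proof: let $\Hilb^{\circ}_{\le(e,e')}(Y \times Y')$ and $\Hilb^{\circ}_{\le(d'e,e')}(Y \times X')$ be the open subschemes of the respective Hilbert schemes parametrizing dominant $(k+1)$- (resp.\ $k$-) correspondences of the indicated degrees. Using the family version of $\Psi$ (Proposition \ref{Psifam}) one gets a stratification by subvarieties $S_\alpha$ of $(\Hilb^{\circ}_{\le(e,e')}(Y \times Y'))_{\mathrm{red}}$ with a morphism $\Psi : S_\alpha \to (\Hilb^{\circ}_{\le(d'e,e')}(Y \times X'))_{\mathrm{red}}$ on each stratum; by the induction hypothesis the union of these morphisms is surjective and each is injective. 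Hence some fixed stratum $S_{\alpha_0}$ contains $V_t \cap W_{X'}$ for general $t \in L$ in its image, and after shrinking $L$ to $L_0$ one gets that $\Psi$ restricted to the preimage of $L_0$ is a bijective (hence birational) morphism, so after further shrinking it is invertible. Pulling back the universal family along $\Psi^{-1}$ over $L_0$ produces a family $\mathcal{W} \to L_0$ of (generically irreducible) $(k+1)$-correspondences in $Y \times Y'$; let $W$ be its image, an irreducible $(k+1)$-correspondence.

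Finally I would verify $\Psi(W) = W_{X'}$. The inclusion $W_{X'} \subseteq \Psi(W)$ is easy: by construction $W \cap (V_t \times Y')$ (more precisely the piece lying over $X'$) contains $W_{X'} \cap V_t$ for $t \in L_0$, so $W_{X'} \subseteq W \cap (Y \times X')$, and since $W_{X'}$ is a union of dominating components it lies in $\Psi(W)$. For the reverse, suppose $\Psi(W)$ had an extra irreducible component $Z$ dominating both $Y$ and $X'$ and not contained in $W_{X'}$. Restricting to a general $V_t$ and using that $V_t$ is ample (so it meets the positive-dimensional fibers of $Z$ over $Y$), one shows $Z \cap V_t$ dominates both $Y$ and $X'$; an application of Proposition \ref{famgen} together with the irreducibility of $Z$ forces, for general $t$, every component of $Z \cap V_t$ to dominate $Y$ and $X'$, so $Z \cap V_t$ would be a dominating component of $\Psi(W \cap (V_t \times Y')) = \Psi(W_t) = V_t \cap W_{X'}$, and then irreducibility of $Z$ and $W_{X'}$ forces $Z = W_{X'}$, a contradiction. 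This gives $\Psi(W) = W_{X'}$. The bound $\deg W \le (e,e')$ follows because the fibers of $W \to Y$ are the degree-$\le e$ curves produced by $\Psi^{-1}$ and the $Y'$-direction degree is unchanged. I expect the main obstacle to be the bookkeeping in the last paragraph — making the ``$V_t$ is ample so it meets the fibers, hence $Z \cap V_t$ still dominates'' argument precise in the $k \ge 1$ setting, where the fibers over $Y$ are $k$-dimensional rather than curves — but this is handled exactly as in the preceding $\Phi$-surjectivity proposition, which is why I would simply say it ``follows mutatis mutandis.''
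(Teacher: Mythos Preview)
Your proposal is correct and matches the paper's approach exactly: the paper's entire proof is the single sentence ``This follows mutatis mutandis from the proof of the previous proposition,'' and what you have written is precisely the detailed unpacking of that mutatis mutandis (induction on $k$ with Theorem \ref{mainpsi} as base case, slicing by a general pencil in $|D|$, the Hilbert-scheme stratification via Proposition \ref{Psifam}, and the same contradiction argument to rule out extra components). You even anticipated this in your final remark.
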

\begin{proof}
    This follows mutatis mutandis from the proof of the previous proposition.
\end{proof}

Thus we have established Theorems \ref{maink} and \ref{mainpsik}.
\section{Usage of smoothness}
In this section we will comment a little on the smoothness hypotheses on $X$ in Theorem \ref{main} and more generally to our other Theorems.  No generalisation to the case when $X$ is arbitrary is possible (though possibly we can relax the statement to normal with canonical singularities)- This is illustrated by the following example brought to our attention by Olivier Martin- take a high degree map from $\PP^1$ to $\PP^2$. Let $X$ be the image. Then if an analogue of Theorem \ref{main} held in this situation, the birational map $X \dra \PP^1$ would extend (as a rational map) to  all of $\PP^2$. However, this is not the case.

        This immediately prompts the question of where in the proof are we using smoothness. Essentially the main usage of smoothness is in all the steps where we argue that some set of points on a variety is Cayley- Bacharach for $K_X.$ For example the proof of \ref{domboth} heavily uses smoothness of $X$. This may not be apparent at first glance, as most of the argument that explicitly requires smoothness appears in the results that we cite from other authors, who in turn cite other authors.
         Hidden behind the lemmas we cite is Proposition 4.2 of \cite{Bast} which involves studying the trace map on  differentials coming from a correspondence. This requires that $X$ is smooth, though it is conceivable that one could get some version of this result with weaker conditions on $X$.

\section{What goes wrong in characteristic $p$?}
In this final section we will comment a little bit on what parts of our paper do not go through to characteristic $p$. Not only do our methods not work, we do not believe that the analogous results are themselves true in positive characteristic. The paper  \cite{S} shows that in positive characteristic result certain measures of irrationality of complete intersections behave a bit differently than in characteristic zero and it is not inconceivable that the same would be true for measures of association as well.

Primarily our main issue is in dealing with inseparable correspondences, in proving Theorem \ref{mainspecialcase} we use the fact that given a dominant correspondence $W \subseteq Y \times \PP^n$ and a general point $z \in \PP^n$, the set $W \cap X \times \{z\}$ is a Cayley Bacharach set for $K_X$. This uses the fact that the map $W \to \PP^n$ is separable, the proof of the above fact involves induced differentials. We do not know how to find an alternative construction in positive characteristic. 

We did consider the idea of looking at only separable dominant correspondences. The problem there was that $\Phi$ and $\Psi$ do not take separable correspondences to separable correspondences and thus it hard to isolate just separable correspondences.

\end{document}